\numberwithin{equation}{section} \theoremstyle{plain}
\newtheorem{theorem}[subsection]{Theorem}
\newtheorem{proposition}[subsection]{Proposition}
\newtheorem{lemma}[subsection]{Lemma}
\newtheorem{corollary}[subsection]{Corollary}
\newtheorem{definition}[subsection]{Definition}
\newtheorem{remark}[subsection]{Remark}
\renewcommand{\leq}{\leqslant}
\renewcommand{\geq}{\geqslant}
\newsavebox{\proofbox}
\savebox{\proofbox}{\begin{picture}(7,7)  \put(0,0){\framebox(7,7){}}\end{picture}}
\newcommand\E{\mathbb{E}}
\newcommand\Z{\mathbb{Z}}
\newcommand\R{\mathbb{R}}
\newcommand\C{\mathbb{C}}
\newcommand\N{\mathbb{N}}
\newcommand\HH{\mathbb{H}}
\newcommand\SL{\operatorname{SL}}
\newcommand\Cay{\operatorname{Cay}}
\newcommand\GL{\operatorname{GL}}
\newcommand\PGL{\operatorname{PGL}}
\renewcommand\P{\mathbb{P}}
\newcommand\F{\mathbb{F}}
\newcommand\G{\mathbb{G}}
\newcommand\Q{\mathbb{Q}}
\newcommand\X{\mathbf{X}}
\newcommand\eps{\varepsilon}
\begin{document}

\title{Approximate subgroups and super-strong approximation}
\author{Emmanuel Breuillard}
\address{Laboratoire de Math\'ematiques\\
B\^atiment 425, Universit\'e Paris Sud 11\\
91405 Orsay\\
FRANCE}
\email{emmanuel.breuillard@math.u-psud.fr}

\subjclass{20G40, 20N99}

\begin{abstract}
Surveying some of the recent developments on approximate subgroups and super-strong approximation for thin groups, we describe the Bourgain-Gamburd method for establishing spectral gaps for finite groups and the proof of the classification of approximate subgroups of semisimple algebraic groups over finite fields. We then give a proof of the super-strong approximation for mod $p$ quotients via random matrix products and a quantitative version of strong approximation. Some applications to the group sieve are also presented. These notes are based on a series of lectures given at the 2013 Groups St. Andrews meeting.
\end{abstract}

\maketitle
\setcounter{tocdepth}{1}
\tableofcontents

\section{Introduction}

In the early 1980's  Matthews-Vaserstein-Weisfeiler \cite{MVW}, and then Nori \cite{nori} and Weisfeiler \cite{weisfeiler} (independently) proved the following theorem:

\begin{theorem}[Strong-approximation theorem]\label{strongapp} Suppose $\G$ is a connected, simply connected, semisimple algebraic group defined over $\Q$, and let $\Gamma \leq \G(\Q)$ be a finitely generated Zariski-dense subgroup. Then for all sufficiently large prime numbers $p$, the reduction $\Gamma_p$ of $\Gamma$ is equal to $\G_p(\F_p)$.
\end{theorem}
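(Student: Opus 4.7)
The plan is to combine an arithmetic spreading-out argument with Nori's classification of subgroups of $\G(\F_p)$ generated by elements of order $p$, and then to exploit the simply connectedness hypothesis.

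First I would spread out: since $\Gamma$ is finitely generated, there is an integer $N$ with $\Gamma \subseteq \G(\Z[1/N])$ and such that $\G$ extends to a smooth affine group scheme over $\Z[1/N]$. For each prime $p \nmid N$, reduction modulo $p$ yields the homomorphism $\Gamma \to \G_p(\F_p)$ with image $\Gamma_p$. A generic-fibre argument then shows that the Zariski closure of $\Gamma_p$ in $\G_p$ equals $\G_p$ for all but finitely many $p$: the equations cutting out the Zariski closure of $\Gamma$ in $\G_{\Z[1/N]}$ have generic fibre $\G_\Q$, and specialise to cut out $\G_{\F_p}$ away from a finite set of primes.

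Next I would show that $\Gamma_p$ contains a non-trivial unipotent element for $p$ large. If not, then $|\Gamma_p|$ is coprime to $p$, and by a Jordan--Larsen--Pink type theorem $\Gamma_p$ would admit a normal abelian subgroup of index bounded in terms of $\G$ alone; Zariski-density would then force this abelian subgroup to be Zariski-dense in $\G_p$, contradicting semisimplicity of $\G$. Let $\Gamma_p^+ \subseteq \Gamma_p$ denote the subgroup generated by its elements of order $p$. Its Zariski closure is a connected algebraic subgroup of $\G_p$ normalised by $\Gamma_p$, and hence by $\G_p$ itself, so is a connected normal subgroup; since $\G$ is semisimple, a further argument using the product decomposition of $\G$ into simple factors (together with Zariski-density of $\Gamma_p$, which projects densely onto every factor) upgrades this to the Zariski-density of $\Gamma_p^+$ in $\G_p$.

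At this point the core input is Nori's theorem: for $p$ larger than an effective constant depending only on $\G$, a subgroup of $\G_p(\F_p)$ whose part $\Gamma_p^+$ generated by $p$-elements is Zariski-dense in $\G_p$ must satisfy $\Gamma_p^+ = \G_p(\F_p)^+$, the full subgroup of $\G_p(\F_p)$ generated by its $p$-elements. Since $\G$ is simply connected and semisimple, Steinberg's theorem (or the Bruhat decomposition, using that $\G_{\F_p}$ is quasi-split by Lang) gives $\G_p(\F_p)^+ = \G_p(\F_p)$ for $p$ large, and therefore $\Gamma_p \supseteq \Gamma_p^+ = \G_p(\F_p)$, whence equality. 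The most substantial step is Nori's theorem itself, whose proof requires an algebraisation, uniform in $p$, of the truncated exponential/logarithm correspondence between unipotent subgroups of $\G_p(\F_p)$ and nilpotent Lie subalgebras of $\g_{\F_p}$; this is precisely what forces the restriction to sufficiently large primes.
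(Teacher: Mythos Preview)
Your outline is essentially the standard Nori argument and is close to what the paper does. Two points deserve care. First, the sentence ``the Zariski closure of $\Gamma_p$ in $\G_p$ equals $\G_p$'' is not literally correct over $\F_p$: every finite subgroup is Zariski-closed, so the Zariski closure of $\Gamma_p$ is just $\Gamma_p$ itself. What you need (and what your spreading-out argument does give) is the weaker statement that $\Gamma_p$ is \emph{sufficiently Zariski-dense}, i.e.\ not contained in any proper algebraic subgroup of $\G_p$ of complexity bounded independently of $p$; this is the notion the paper works with throughout. Second, your ``further argument'' that $\Gamma_p^+$ is Zariski-dense in $\G_p$ does go through, but one should note explicitly that for $p$ larger than the ambient dimension every $p$-element of $\G_p(\F_p)$ has order exactly $p$ (not $p^2$), so that an element of $\Gamma_p$ whose projection to some simple factor has order $p$ can be powered to produce an element of $\Gamma_p^+$ with nontrivial projection there.

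The paper's packaging is slightly different and worth knowing: instead of working with $\Gamma_p^+$ directly, it first reduces to two generators, then introduces the proper subvariety $\X\subset\G\times\G$ of pairs that fail to generate a Zariski-dense subgroup (Lemma~\ref{dense}), shows that $\X$ reduces well modulo $p$ to cut out exactly the pairs contained in a bounded-complexity proper subgroup (Lemma~\ref{densep}), and then invokes Nori in the form of Theorem~\ref{suffdense}. This detour buys a quantitative statement (Theorem~\ref{sappq}): one gets an explicit bound $p>M_S^{C_0}$ on the first good prime in terms of the height of the generating set, which your direct $\Gamma_p^+$ argument does not immediately yield.
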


For example, if $\Gamma \leq \SL_n(\Z)$ is a finitely generated Zariski dense subgroup, then $\Gamma_p=\SL_n(\Z/p\Z)$ for all large enough prime numbers $p$. When $p$ is large enough, the algebraic group $\G$ (viewed as a closed subgroup of some $\GL_n$) admits a smooth reduction defined over $\F_p$, which we denote by $\G_p$.
 Since $\Gamma$ is finitely generated, there are finitely many primes $p_1,\ldots,p_k$ (appearing in the denominators of the matrix entries of $S$) such that $\Gamma$ belongs to $\G(\Z[\frac{1}{p_1},\ldots,\frac{1}{p_k}]):= \G \cap \GL_n(\Z[\frac{1}{p_1},\ldots,\frac{1}{p_k}])$, and the reduction modulo $p$ map is well-defined on this subgroup if $p$ is large enough.

The result fails if $\G$ is not simply connected (e.g. the image of $\SL_2(\Z)$ in $\PGL_2(\F_p)$ has index $2$ when $p>2$). However every connected absolutely \ almost simple algebraic group admits a simply connected finite cover to which we can lift $\Gamma$ and apply the theorem. This yields that $[\G_p(\F_p):\Gamma_p]$ is nevertheless always bounded (for $p$ large) by a constant depending only on $\G$ (one can take $1+rank(\G)$, see \cite[Remark 3.6]{nori}).

A similar result holds for groups defined over number fields instead of $\Q$. Its proof reduces to the case of $\Q$ by suitable restriction of scalars. See Remark \ref{numberfields} below (see also \cite{weisfeiler}).

That the result holds in the case when $\Gamma$ is an $S$-arithmetic group $\Gamma=\G(\Z[\frac{1}{p_1},\ldots,\frac{1}{p_m}])$ was known much earlier by work of Kneser \cite{kneser} and Platonov \cite{platonov} in particular. See \cite[Chapter 7]{platonov-rapinchuk}) and \cite{rapinchuk-survey}.

Theorem \ref{strongapp} is then of particular interest when the group $\Gamma$ is not a full $S$-arithmetic subgroup of $\G$ but has infinite index in one of them, while still remaining Zariski dense in $\G$ ($S$-arithmetic subgroups are Zariski dense by the Borel density theorem). Such a group is called a \emph{thin subgroup} of $\G$ in recent terminology due to Peter Sarnak \cite{sarnak-notes}.

\bigskip

What we call \emph{super-strong approximation} is the fact stated in Theorem \ref{ssapp} below that $\Gamma$ not only surjects onto $\G_p(\F_p)$ for $p$ large but that the associated Cayley graphs of $\G_p(\F_p)$ form a \emph{family of expanders}. The goal of these notes is to give a proof of this fact, give some applications, and introduce the reader to the various techniques used in the proof.

\bigskip

It is of course not the purpose of this survey to give a complete introduction to expander graphs and for that matter we refer the reader to the many sources on the subject starting with Lubotzky's monograph \cite{lubotzky} and survey \cite{lubotzky-survey} (see also  \cite{hoory-linial-wigderson} and \cite{kowalskinotes,tao-forthcoming-book,breuillard-PCMI}). Let us simply recall that to every finite $k$-regular graph $\mathcal{G}$ is associated a combinatorial Laplace operator acting on the (finite dimensional) space of functions on the vertices of the graph. It is defined by the formula

$$\Delta f (x) = f(x) - \frac{1}{k} \sum_{y \sim x} f(y),$$
where $y\sim x$ is a vertex connected to $x$ by an edge. This operator is symmetric and non-negative. Its eigenvalues are real and non-negative. The eigenvalue $0$ comes with multiplicity one if the graph is connected and the first nonzero eigenvalue is denoted by $\lambda_1(\mathcal{G})$ and satisfies satisfies:

\begin{equation}\label{lambdadef}
 \lambda_1(\mathcal{G}) = \inf \{ \langle \Delta f, f\rangle , ||f||_2=1, \sum_x f(x) = 0\}.
 \end{equation}

 An infinite family of $k$-regular graphs $(\mathcal{G}_n)_{n \geq 1}$ is said to be a \emph{family of expanders} if there is $\eps>0$ such that for all $n\geq 1$,

$$\lambda_1(\mathcal{G}_n) > \eps.$$

We are now in a position to state the following strengthening of Theorem \ref{strongapp}.

\begin{theorem}[Super-strong approximation]\label{ssapp} Suppose $\G$ is a connected, simply connected, semi-simple algebraic group defined over $\Q$, and let $\Gamma \leq \G(\Q)$ be a Zariski-dense subgroup generated by a finite set $S$. Then there is $\eps=\eps(S)>0$ such that for all large enough prime numbers $p$, the reduction $\Gamma_p$ of $\Gamma$ is equal to $\G_p(\F_p)$ and the associated Cayley graph $\Cay(\G_p(\F_p),S_p)$ is an $\eps$-expander.
\end{theorem}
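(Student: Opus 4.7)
The plan is to apply the Bourgain--Gamburd method. By Theorem~\ref{strongapp}, $\Gamma_p = G_p := \G_p(\F_p)$ for all large $p$; fix such a $p$, and let $\mu_p$ be the uniform probability measure on $S_p \cup S_p^{-1}$. A standard spectral argument reduces the $\eps$-expansion claim for $\Cay(G_p,S_p)$ to showing that
$$\|\mu_p^{*\ell}\|_2^2 \leq |G_p|^{-1+o(1)}$$
for some $\ell$ proportional to $\log |G_p|$, uniformly in $p$. This is achieved by combining three ingredients: (i) \emph{quasi-randomness}, namely every non-trivial irreducible representation of $G_p$ has dimension $\geq |G_p|^c$ (Landazuri--Seitz lower bounds for finite simple groups of Lie type); (ii) a \emph{product theorem}, stating that every generating subset $A \subseteq G_p$ with $|G_p|^{\delta} \leq |A| \leq |G_p|^{1-\delta}$ satisfies $|A \cdot A \cdot A| \geq |A|^{1+c(\delta)}$, furnished by the classification of approximate subgroups of semisimple algebraic groups over finite fields surveyed above; and (iii) a \emph{non-concentration} estimate at logarithmic scale, namely
$$\mu_p^{*\ell}(gH) \leq |G_p|^{-\delta}$$
for some $\delta>0$, every proper subgroup $H \leq G_p$, and every $g \in G_p$.

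The product theorem (ii) converts into an $L^2$ \emph{flattening} statement: if $\nu$ is a symmetric probability measure on $G_p$ with $\|\nu\|_2^2 \geq |G_p|^{-1+\delta}$ whose bulk is not concentrated in a coset of a proper subgroup at polynomial scale, then $\|\nu * \nu\|_2 \leq |G_p|^{-\eta}\|\nu\|_2$. Iterating this with $\nu = \mu_p^{*2^k n_0}$ drives $\|\mu_p^{*\ell}\|_2^2$ down to $|G_p|^{-1+o(1)}$ in $O(\log\log|G_p|)$ doubling steps, provided the non-concentration hypothesis persists at every iterate. Quasi-randomness (i) then rules out any remaining slowly-decaying component of the convolution operator on $\ell_0^2(G_p)$ and yields a spectral gap $\eps=\eps(S)>0$ independent of $p$. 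The key book-keeping in this piece of the argument is to check that the non-concentration hypothesis of the flattening lemma is inherited under self-convolution, which ultimately reduces to input (iii).

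The main obstacle, and the technical heart of the argument, is the non-concentration estimate (iii). Because $\G$ has many proper algebraic subgroups, the walk could \emph{a priori} cluster inside a coset of some proper subgroup of $G_p$; the most dangerous candidates are the mod $p$ reductions of proper algebraic subgroups of $\G$ itself. The strategy is to lift the analysis to the ambient archimedean (or $p$-adic) group and combine two inputs. First, results on random matrix products --- positivity and simplicity of the Lyapunov spectrum of the random walk driven by $S$, together with large deviation estimates of Guivarc'h--Le Page type --- to quantify how rapidly a random word of length $\sim \log p$ in $S$ escapes any prescribed proper algebraic subvariety of $\G$. Second, a \emph{quantitative strong approximation} statement, bounding the algebraic complexity (degree, height) of the subvarieties that could capture the word, so that the first input applies uniformly over the finitely many (up to bounded complexity) subgroups of $G_p$ arising as mod $p$ reductions. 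Together these yield (iii) with $\ell$ of order $\log |G_p|$, closing the Bourgain--Gamburd loop and producing the uniform spectral gap asserted by the theorem.
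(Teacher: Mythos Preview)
Your proposal is correct and follows essentially the same route as the paper: verify the three inputs of the Bourgain--Gamburd machine (Proposition~\ref{machine}), with quasirandomness via Landazuri--Seitz, the approximate-subgroup/product theorem input from Section~\ref{approxsec}, and non-concentration via the combination of quantitative strong approximation (Theorem~\ref{sappq}) and random matrix products (Theorem~\ref{deviation}). One small remark: the paper's non-concentration input does \emph{not} actually rely on simplicity of the Lyapunov spectrum (which would require a proximality hypothesis that can fail over $p$-adic fields), but rather on the more robust estimate of Proposition~\ref{cont}, which needs only strong irreducibility and unboundedness.
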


Here $S_p$ is the image of $S$ by reduction modulo $p$. As before, the result also holds if $\G$ is not assumed to be simply connected, but $\Gamma_p$ may then only be a subgroup of $\G_p(\F_p)$ whose index is nevertheless bounded independently of $p$, while $\Cay(\Gamma_p,S_p)$ remains an $\eps$-expander.

\bigskip

This theorem is a special case of a result due to Salehi-Golsefidy and Varj\'u \cite{salehi-golsefidy-varju}, which asserts that the conclusion  also holds for quotient modulo a square free integer and even when the connected algebraic group $\G$ is only assumed to be perfect. Their proof follows the so-called Bourgain-Gamburd expansion machine, which can be implemented in this context in part thanks to the recent results on approximate subgroups of linear groups due to Pyber-Szab\'o \cite{pyber-szabo} and Breuillard-Green-Tao \cite{breuillard-green-tao-linear}.

\bigskip

In these notes we describe the Bourgain-Gamburd method as well as the above mentioned results on approximate subgroups and finally give a complete proof of Theorem \ref{ssapp} (i.e. of super-strong approximation for mod $p$ quotients) following a somewhat alternate route than in \cite{salehi-golsefidy-varju} by use of random matrix products \cite{breuillard-large-deviations}.

\subsection{The Lubotzky alternative and its expander version}

One can formulate a version of the strong approximation theorem, which is valid for every finitely generated subgroup of $\GL_d(k)$, where $k$ is an arbitrary field of characteristic zero (one can also deal with the positive characteristic case thanks to the work of Pink \cite{pink}, however no super-strong version is known in positive characteristic thus far). When the group $\Gamma=\langle S \rangle$ we start with is non virtually solvable, one can show that there is a non trivial connected and simply connected semisimple algebraic group $G$ defined over $\Q$ and a group homomorphism from a finite index subgroup of $\Gamma$ into $\G(\Q)$ with a Zariski-dense image (see \cite[Prop. 16.4.13]{lubotzky-segal} and the discussion that follows). This allows to then apply the strong-approximation theorem \ref{strongapp} and deduce that $\Gamma_0$ admits $\G_p(\F_p)$ as a quotient for almost all $p$.

This information was used in a key way by  Lubotzky and Mann in their work on subgroup growth \cite{lubotzky-mann}. For this version of strong approximation, called \emph{the Lubotzky alternative}, and we refer the reader to the notes devoted to it and its various refinements in the book by Lubotzky and Segal on subgroup growth (\cite[16.4.12]{lubotzky-segal}, see also \cite{klopsch-nikolov-voll}). Strengthened by the super-strong approximation theorem, this gives the following statement:

\begin{theorem}(Lubotzky super-alternative)\label{lubalt} Let $S$ be a finite symmetric subset of $\GL_d(k)$, where $k$ is a field of characteristic zero. Then the subgroup $\Gamma=\langle S \rangle$ generated by $S$ contains a subgroup $\Gamma_0$ whose index $m$ in $\Gamma$ is finite and bounded in terms of $d$ only, such that
\begin{itemize}
\item either the subgroup $\Gamma_0$ is solvable,
 \item or there is a connected, simply connected, semisimple algebraic group $\G$ defined over $\Q$, such that for all large enough primes $p \in \N$, there is a surjective group homomorphism $\rho_p$ from $\Gamma_0$ to $\G_p(\F_p)$ such that the Cayley graph $\Cay(\G_p(\F_p), \rho_p(S_0))$ is an $\eps$-expander, for some $\eps>0$ independent of $p$, where $S_0$ is a subset of $S^{2m}$ generating $\Gamma_0$.
     \end{itemize}
\end{theorem}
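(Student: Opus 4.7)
The plan is to combine the classical Lubotzky alternative (the non-expander version, as recalled just above in the text) with Theorem \ref{ssapp}. If $\Gamma = \langle S \rangle$ is virtually solvable, a Jordan--Wehrfritz-type theorem for linear groups in characteristic zero produces a solvable finite-index subgroup $\Gamma_0 \leq \Gamma$ whose index is bounded purely in terms of $d$, and the first bullet holds with $S_0 = S$ (or rather the Schreier generators, see below). So we may assume from now on that $\Gamma$ is not virtually solvable.

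By a standard specialization argument (pass from $k$ to a finitely generated subring of $k$ containing the matrix entries of $S$, specialize the remaining transcendentals into $\bar\Q$ while preserving the isomorphism type of $\Gamma$ and its Zariski closure, then apply restriction of scalars from a number field down to $\Q$) one reduces to the situation where $\Gamma \leq \GL_N(\Q)$ for some $N$ depending only on $d$. The classical Lubotzky alternative \cite[16.4.12]{lubotzky-segal} then provides a finite-index subgroup $\Gamma_0 \leq \Gamma$, a connected simply connected semisimple algebraic $\Q$-group $\G$, and a homomorphism $\rho : \Gamma_0 \to \G(\Q)$ with Zariski-dense image. The key point is that the index $m = [\Gamma : \Gamma_0]$ is bounded purely in terms of $d$: Jordan's theorem controls the component group of the Zariski closure of $\Gamma$, and the further finite-index passages (quotienting by the solvable radical of the connected Zariski closure, then lifting to the simply connected cover) are all bounded by universal functions of $d$.

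The next step is to extract a generating set $S_0 \subset S^{2m}$ of $\Gamma_0$ using the Reidemeister--Schreier procedure: choose a Schreier transversal $T$ for $\Gamma_0 \backslash \Gamma$ consisting of words of length at most $m$ in $S$; then the Schreier generators $t s t'^{-1}$, where $s \in S$ and $t, t' \in T$ with $tst'^{-1} \in \Gamma_0$, generate $\Gamma_0$ and lie in $S^{2m}$ (using the symmetry of $S$). The image $\rho(\Gamma_0) = \langle \rho(S_0) \rangle$ is a finitely generated Zariski-dense subgroup of $\G(\Q)$, so Theorem \ref{ssapp} applies directly to it and yields that for all sufficiently large primes $p$, the composition $\rho_p := \pi_p \circ \rho$ satisfies $\rho_p(\Gamma_0) = \G_p(\F_p)$ and $\Cay(\G_p(\F_p), \rho_p(S_0))$ is an $\eps$-expander for some $\eps = \eps(\rho(S_0)) > 0$.

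The main obstacle is securing the uniform bound $m = m(d)$ on the index, independent of $S$ and $k$: it requires a quantitative reading of each of the reductions in the proof of the Lubotzky alternative, keeping track of the indices produced at every passage (the component group of the Zariski closure, the arithmeticity-restoring specialization, the quotient by the solvable radical, the lift to the simply connected cover) and checking that each one is controlled by a Jordan-type bound depending only on the ambient linear dimension. Once this uniform version of the classical Lubotzky alternative is in place, the super-strong conclusion is a black-box application of Theorem \ref{ssapp} to the finitely generated Zariski-dense subgroup $\rho(\Gamma_0) \leq \G(\Q)$ with generating set $\rho(S_0)$.
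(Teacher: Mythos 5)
Your proposal is correct and follows essentially the same route the paper takes: the paper derives Theorem \ref{lubalt} by citing the classical Lubotzky alternative of \cite[Prop.\ 16.4.13]{lubotzky-segal} for the bounded-index subgroup $\Gamma_0$ and the Zariski-dense homomorphism into $\G(\Q)$, noting that $\Gamma_0$ is generated by a symmetric subset of a bounded power of $S$, and then applying Theorem \ref{ssapp} as a black box. The only (harmless) discrepancy is your Schreier transversal of length at most $m$ — taking representatives of length at most $m-1$ gives generators in $S^{2m-1}$, as the paper notes.
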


Note that given a group $\Gamma$ generated by a symmetric set $S$, then every subgroup of finite index  $\Gamma_0$ is finitely generated by a symmetric subset contained in $S^{2m-1}$, if $m$ is the index of $\Gamma_0$ in $\Gamma$ (e.g. see \cite[Lemma C.1]{breuillard-green-tao-linear}).

\bigskip

A version of Theorem \ref{lubalt} for a bounded number of primes is also true: given large enough distinct primes $p_1,\ldots,p_k$, the Cayley graphs $\Cay(G(\F_{p_1})\times \ldots \times G(\F_{p_k}) , (\rho_{p_1}\times \ldots \times \rho_{p_k})(S))$ are $\eps$-expanders for a uniform $\eps>0$ independent of the number of primes $k$. We will prove this stronger version only with an $\eps$ depending on $k$ (but not on the choice of $k$ primes). See Theorem \ref{severalp} below. One needs the works of Varj\'u \cite{varju} and Salehi-Golsefidy-Varj\'u \cite{salehi-golsefidy-varju} to get this uniformity in the number of primes, but the proof is rather more involved. Note that at any case $\eps$ depends on $S$ and it is an open question whether this dependence can be removed (see \cite{breuillard-gamburd} for partial results in this direction).

\subsection{The group sieve method}

Knowing that the finite quotients Cayley graphs are expanders is a very useful information for a number of applications to group theory and number theory, in particular it is the basis of the so-called Group Sieve, pioneered by Kowalski \cite{kowalski-book, kowalski-bourbaki}, Rivin \cite{rivin}, and Lubotzky-Meiri \cite{lubotzky-meiri, lubotzky-meiri2} and of the Affine Sieve of Bourgain-Gamburd-Sarnak \cite{bourgain-gamburd-sarnak}. See \cite{kontorovich} and \cite{kowalski-notes} for two nice expositions.

 Roughly speaking, the expander property allows to give very good bounds on the various error terms that appear when sieving modulo primes. In these notes, we will give a general statement, \emph{the group sieve lemma} (Lemma  \ref{group-sieve} below), due to Lubotzky and Meiri, which allows to show that a subset $Z$ of a given finitely generated linear group is exponentially small, provided its reduction modulo $p$ does not occupy too large a subset of the quotient group for many primes $p$. For this version of the group sieve, expansion for pairs of primes is sufficient (i.e. we need that $G(\F_{p_1}) \times G(\F_{p_2})$ expands for $p_1 \neq p_2$), so our version of the Lubotzky super-alternative above will be enough. Expansion for all square free moduli is necessary however, and sometimes crucial, in other situations, such as in the Affine Sieve pioneered by Bourgain-Gamburd-Sarnak \cite{bourgain-gamburd-sarnak} and further developed by Salehi-Golsefidy-Sarnak \cite{salehi-sarnak}, Bourgain and Kontorovich \cite{bourgain-kontorovich} and others.

\bigskip

The conclusion of the super-strong approximation theorem (Theorem \ref{ssapp}) can be reformulated in the following way: there is $\eps>0$ depending only on the generating set $S$ such that for every real valued function $f$ on the group $\G_p(\F_p)$, such that $\sum_{x \in \G_p(\F_p)} f(x)=0$ and $||f||^2_{\ell^2}= \sum_{x \in \G_p(\F_p)} |f(x)|^2=1$,
$$\langle \Delta f,f \rangle > \eps,$$
where
$$\langle \Delta f,f \rangle = \frac{1}{2k}\sum_{s \in S} ||s \cdot f - f||^2_{\ell^2} = \frac{1}{2k}\sum_{s \in S}  \sum_{x \in \G_p(\F_p)} |f(s^{-1}x) -f(x)|^2.$$
Let $S_p=\{s_1,\ldots,s_k\}$ be the image of $S$ under the reduction modulo $p$ map and $\mu_{S_p}$ be the uniform probability measure on $S_p$, assigning equal mass $1/k$ ($=\frac{1}{|S|}$ for $p$ large enough) to each element of $S_p$.

$$\mu_{S_p} := \frac{1}{k}(\delta_{s_1} + \dots + \delta_{s_k})$$

Note that $\mu_{S_p}=Id - \Delta$ as operators on $\ell^2(\G_p(\F_p))$, and hence its operator norm on $\ell^2_0(\G_p(\F_p))$, the orthogonal of constants, satisfies:

$$||\mu_{S_p}|_{\ell^2_0}||< 1- \eps $$

It is in this form that the theorem is used in its applications to the group sieve method. For example it allows Lubotzky and Meiri \cite{lubotzky-meiri} to establish the following result about the scarcity of proper powers in non virtually solvable linear groups. A group element is called a proper power if it is of the form $g^n$ for some integer $n \geq 2$ and some other group element $g$ (from the same group).

\begin{theorem}(Lubotzky-Meiri \cite{lubotzky-meiri}) Let $\Gamma \leq \GL_d(\C)$ be a finitely generated subgroup and let $\mu_S$ be the uniform probability measure on a finite symmetric generating $S$. Assume that $\Gamma$ is not virtually solvable. Then the set $\mathcal{P}_\Gamma$ of proper powers in $\Gamma$ is exponentially small in the sense that there is $c=c(S)>0$ such that for every $n \in \N$,
$$\mu_S^n (\mathcal{P}_\Gamma) \leq e^{-c n}.$$
\end{theorem}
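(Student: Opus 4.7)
The plan is to derive the bound from super-strong approximation (Theorem \ref{ssapp}) via the group sieve (Lemma \ref{group-sieve}), following the Lubotzky--Meiri strategy. Because $\Gamma$ is not virtually solvable, the second case of the Lubotzky super-alternative (Theorem \ref{lubalt}) applies: after passing to a finite-index subgroup $\Gamma_0 \leq \Gamma$ generated by some $S_0 \subset S^{2m}$, there exist a connected, simply connected, semisimple $\Q$-group $\G$ and, for all large primes $p$, a surjection $\rho_p\colon \Gamma_0 \twoheadrightarrow \G_p(\F_p)$ whose Cayley graphs form a uniform family of $\eps$-expanders. Since $\mathcal{P}_\Gamma \cap \Gamma_0 \subset \mathcal{P}_{\Gamma_0}$, and a standard coset decomposition transfers an exponential bound on $\mu_{S_0}^n(\mathcal{P}_{\Gamma_0})$ back to $\mu_S^n(\mathcal{P}_\Gamma)$ with a modified constant, I may work with $\Gamma_0$ and $S_0$ throughout.

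Next I would decompose proper powers by prime exponent: every $h^k$ with $k\geq 2$ lies in $\Gamma_0^{[q]} := \{h^q : h \in \Gamma_0\}$ for some prime $q \mid k$, so $\mathcal{P}_{\Gamma_0} = \bigcup_{q\text{ prime}} \Gamma_0^{[q]}$. For each fixed $q$, I apply Lemma \ref{group-sieve} to $Z = \Gamma_0^{[q]}$. The required input is that, for a positive-density set of primes $p$, the image $\rho_p(\Gamma_0^{[q]})$ occupies density at most $1-\delta_q$ in $\G_p(\F_p)$. Choosing primes $p$ with $p \equiv 1 \pmod q$ (positive density by Dirichlet), the $q$-th power map on a split maximal torus $T \cong (\F_p^\times)^r$ of $\G_p(\F_p)$ (where $r = \rk \G$) has kernel of order $q^r$, hence image of density $q^{-r}$ in $T$; the Weyl integration formula propagates this to $\delta_q \geq c\, q^{-r}$ for the whole group. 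Combined with the pair-expansion version of Theorem \ref{ssapp}, Lemma \ref{group-sieve} produces
\[\mu_{S_0}^n(\Gamma_0^{[q]}) \leq \exp\!\bigl(-c_1\, n\, q^{-2r}\bigr)\]
for some $c_1 = c_1(S) > 0$.

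The main obstacle is that this bound deteriorates as $q \to \infty$, so a naive summation over all primes $q$ does not give exponential decay in $n$: the sieve is effective only for $q$ bounded. I would resolve this with a complementary counting argument handling large $q$. Since $\Gamma$ is not virtually solvable, Tits' alternative provides a non-abelian free subgroup in $\Gamma_0$, whence $\Gamma_0$ is non-amenable and Kesten's theorem gives $\sup_g \mu_{S_0}^n(g) \leq \rho^n$ for some $\rho = \rho(S_0) < 1$. Moreover, one has $\ell_{S_0}(h^q) \geq c_0\, q$ for every nontrivial $h$ (by quasi-isometric comparison with the free subgroup, up to adjusting constants), so elements of $\Gamma_0^{[q]}$ in the support of $\mu_{S_0}^n$ arise from $h$ of word length at most $n/(c_0 q)$, of which there are at most $(2|S_0|)^{n/(c_0 q)}$. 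Hence
\[\mu_{S_0}^n(\Gamma_0^{[q]}) \leq \rho^n\, (2|S_0|)^{n/(c_0 q)},\]
which is $\leq e^{-c_2 n}$ once $q$ exceeds a constant $T = T(\rho,|S_0|)$. Applying the sieve bound to the finitely many primes $q \leq T$ (each giving exponential decay with rate $c_1 T^{-2r}$) and the counting bound to primes $q > T$, and summing, yields $\mu_S^n(\mathcal{P}_\Gamma) \leq e^{-cn}$ for some $c = c(S) > 0$, completing the proof.
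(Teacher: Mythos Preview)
Your overall architecture (sieve for bounded exponents, separate argument for large exponents) matches the paper's, but the large-exponent step contains a genuine gap, and two smaller issues deserve comment.

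\textbf{The large-$q$ step does not work.} The assertion that $\ell_{S_0}(h^q)\geq c_0 q$ for every nontrivial $h\in\Gamma_0$, with a \emph{uniform} $c_0>0$, is false in general linear groups. If $\Gamma_0$ contains a unipotent element $u$ (as it will whenever $\Gamma_0$ is, say, an arithmetic lattice in $\SL_d$, $d\geq 3$), then $\|u^q\|$ grows only polynomially in $q$ and $\ell_{S_0}(u^q)=O(\log q)$. The quasi-isometric embedding of a free subgroup controls only elements of that subgroup, not all of $\Gamma_0$. Moreover, even granting your inequality, it bounds $q$ (not $\ell_{S_0}(h)$) in terms of $n$, so the subsequent count of $h$'s with $\ell_{S_0}(h)\leq n/(c_0 q)$ does not follow; you would need $\ell_{S_0}(h^q)\geq c_0\,q\,\ell_{S_0}(h)$, which is a free-group phenomenon that fails here. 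The paper's substitute is an eigenvalue argument: after reducing to $\Gamma\leq\SL_d(\Z)$, if $\gamma=\gamma_0^m$ lies in $\operatorname{supp}\mu^n$ then $\|\gamma_0^m\|\leq M_S^n$; either $\gamma_0$ has an eigenvalue of modulus $>1$, hence (by integrality of the characteristic polynomial) of modulus $\geq 1+\delta(d)$, forcing $m=O(n)$; or all eigenvalues of $\gamma$ lie on the unit circle, whence by Kronecker's lemma they are roots of unity of bounded order and $\gamma^{d!}$ is unipotent. The latter is a proper subvariety condition and is exponentially small by Theorem~\ref{subvariety}. Thus only $m\leq Cn$ survive, and the sieve is applied to each such $m$.

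\textbf{Two smaller points.} First, your density estimate is much too pessimistic: when $p\equiv 1\pmod q$ the \emph{non}-$q$-th-powers in the split torus have density $1-q^{-r}\geq 1/2$, and propagating via regular conjugacy classes gives a \emph{uniform} $\alpha$ (the paper obtains $\alpha=1/(3d!)$), so the sieve yields $\mu^n(\Gamma_0^{[q]})\leq e^{-c_1 n}$ with $c_1$ independent of $q$. One then sums over the $O(n)$ relevant exponents; this also requires an effective count of primes $p\equiv 1\pmod m$ up to $e^{O(n)}$ uniformly for $m\leq Cn$, which the paper extracts from the classical estimate for $\pi(x;m,1)$. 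Second, the inclusion $\mathcal{P}_\Gamma\cap\Gamma_0\subset\mathcal{P}_{\Gamma_0}$ is false as stated (take $\Gamma=\Z$, $\Gamma_0=2\Z$, $g=2=1^2$): if $g=h^k\in\Gamma_0$ with $h\notin\Gamma_0$ and the order of $h\Gamma_0$ in $\Gamma/\Gamma_0$ equals $k$, then $g$ need not be a proper power in $\Gamma_0$. The defect concerns only exponents $k$ dividing $[\Gamma:\Gamma_0]$ and can be absorbed, but it must be addressed.
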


Here $\mu_S^n$ is the $n$-th convolution power of the probability measure $\mu_S$ on $\Gamma$. Equivalently, it is the distribution at time $n$ of the simple random walk starting at the identity on the associated Cayley graph $\Cay(\Gamma,S)$. Or more explicitly:

$$\mu_S^n (\mathcal{P}_\Gamma) = \P_{w \in W_{n,k}}(\mathcal{P}_\Gamma):=\frac{|\{ w, |w|= n , \overline{w} \in \mathcal{P}_\Gamma\}|}{|\{ w, |w|= n\}|},$$
where $W_{n,k}$ is the set of (non reduced!) words $w$ of length $|w|=n$ in the formal alphabet made of letters from the set $S$, and $\overline{w}$ its value as a group element when computed inside $\Gamma$. One can analogously count reduced words of length $n$ in the free group and get the same result, but we note in passing that obtaining a result of this kind for the average with respect to the word metric on $\Gamma$ induced by $S$ seems out of reach at the moment, because little is known about the balls for the word metric on a group of exponential growth.

\bigskip

\subsection{On the proof of the super-strong approximation theorem} Theorem \ref{ssapp} was first proved in the special case of subgroups of $\SL_2(\Z)$ in a remarkable breakthrough by Bourgain and Gamburd \cite{bourgain-gamburd}. They deduced the expansion by showing that the simple random walk on the finite quotient $\SL_2(\Z/p\Z)$ must equidistribute very fast, indeed after only $O(\log p)$ steps. In doing so they reversed the traditional way of looking at things: traditionnally spectral gaps estimates were proven by other methods (e.g. representation theory, property $(T)$, etc.) and were then used to prove fast equidistribution of random walks. Bourgain and Gamburd reversed this order, first proving equidistribution and then deducing the gap (see Proposition \ref{rwchar} below for the equivalence between spectral gap and fast equidistribution).

This idea can be traced back to the seminal work of Sarnak and Xue \cite{sarnak-xue}, which gave a new, softer,  approach toward Selberg's $3/16$ theorem (i.e. the first eigenvalue of the Laplace operator on quotients of the hyperbolic plane by congruence subgroups of $SL(2,\Z)$ is at least $3/16$, see \cite{selberg}). They  exploited, via the trace formula, the high multiplicity of the spectrum coming from the $\frac{p-1}{2}$ lower bound on the dimension of the smallest non trivial complex representation of $\SL_2(\F_p)$ (this bound goes back to Frobenius) and a soft combinatorial upper bound on the number of lattice points in a ball of radius roughly $\log p$. We refer the reader to the expository papers of P. Sarnak \cite{sarnak-notices, sarnak-icm}, where this method and its history (in particular the role of Bernstein and Kazhdan) is described.

In his thesis \cite{gamburd-thesis} Gamburd pursued this method and established the first spectral gap result valid for thin groups: he showed that if a finitely generated subgroup $\Gamma$ of $\SL_2(\Z)$ is large enough in the sense that the Hausdorff dimension of its limit set on $\P^1(\R)$ is at least $\frac{5}{6}$, then the spectrum of the associated (infinite volume) quotients of the hyperbolic plane modulo the congruence subgroups $\Gamma_p := \Gamma \cap \ker ( \SL_2(\Z) \to \SL_2(\Z/p\Z))$  admits a uniform lower bound independent of $p$. In turn the resulting Cayley graphs of $\SL_2(\Z/p\Z)$ are expander graphs.

Bourgain and Gamburd \cite{bourgain-gamburd} pushed the method even further to implement it for all Zariski-dense subgroups of $\SL_2(\Z)$ with no restriction on the limit set. The structure of their proof retained the same patterns, playing the high multiplicity lower bound against a combinatorial upper bound via the trace formula applied to convolution powers of a fixed probability measure on the generating set. Achieving this combinatorial upper bound is the gist of their work: they brought in an important graph theoretic result (the Balog-Szemer\'edi-Gowers lemma, a parent of the celebrated Szemer\'edi regularity lemma) revisited in this context by Tao \cite{tao-noncommutative} to show that convolution powers of probability measures decay in $\ell^2$ norm (the so-called $\ell^2$-flattening) unless the measure charges significantly a certain  approximate subgroup. That  there  exists no interesting approximate subgroup of $SL_2(\F_p)$ was established for this purpose by Helfgott \cite{helfgott}. The combinatorial upper bound (on the probability of return to the identity of the simple random walk at time roughly $\log p$), and hence the spectral gap,  then reduces to establishing a certain non concentration estimate on subgroups  for random walks on $\SL_2(\Z)$ (see Theorem \ref{deviation}), which in this case can easily be deduced from Kesten's theorem \cite{kesten}.

This new method became known as the \emph{Bourgain-Gamburd expansion machine} (see e.g. the papers \cite{bgt-suzuki, bggt} as well as the forthcoming book \cite{tao-forthcoming-book}). Its scope goes beyond $SL_2(\F_p)$ and, quite remarkably, it can potentially be applied to any finite group (see Proposition \ref{machine} for a precise formulation of the method and its ingredients). It was understood early on that the scheme of the proof in \cite{bourgain-gamburd} was general enough that it could be made to work in the general setting of Theorem \ref{ssapp}, provided one could establish each step in the right generality.  The bounds on the dimension of complex representations are well-known thanks to classical work of Landazuri-Seitz \cite{landazuri-seitz}. The graph theoretic lemma needs no modification in the general setting. The remaining two items however require deeper consideration. The classification of approximate groups, first established by Helfgott for $SL_2(\F_p)$ and $SL_3(\F_p)$, was finally completed in the general case by Pyber and Szab\'o \cite{pyber-szabo} and independently by Breuillard-Green-Tao \cite{breuillard-green-tao-linear}. Regarding the upper bounds on the probability of hitting a subgroup, there are two known ways to achieve them. The first is to use the theory of random matrix products, and this was done in subsequent work of Bourgain-Gamburd \cite{bourgain-gamburd-SLn}, but only in the special case of subgroups of $SL_n(\Z)$, because the estimates from the theory of random matrix products required to deal with the general case were lacking. The second consists in applying a ping-pong argument akin to the proof of the Tits alternative \cite{tits-alternative}, and this was performed by Varj\'u in his thesis \cite{varju} and subsequently by Salehi-Golsefidy and Varj\'u in their joint work \cite{salehi-golsefidy-varju}, in which they establish Theorem \ref{ssapp} in full generality.

In the remainder of these notes we will prove Theorem \ref{ssapp}  following each of these steps very closely. The only novelty in our proof lies in the last step: thanks to \cite{breuillard-large-deviations}, we now understand how to use random matrix products to prove in the desired generality the required upper bounds for the probability of hitting a subgroup (the non-concentration estimates). This approach is somewhat more direct than the one taken by Salehi-Golsefidy and Varj\'u in \cite{salehi-golsefidy-varju}, and it is very close to what Green, Tao and I had in mind, when we announced a proof of Theorem \ref{ssapp} in \cite[Theorem 7.3]{breuillard-green-tao-linear-announce} in the special case of absolutely simple groups over $\Z$, but never came to the point of writing it up in full.

As already mentioned Salehi-Golsefidy and Varj\'u \cite{salehi-golsefidy-varju} actually proved a strong version of Theorem \ref{ssapp} showing the expansion property also for the quotients modulo a square free integer, and assuming only that $\G$ is perfect (which is also a necessary condition for expansion). See Theorem \ref{alirezapeter} below. That strong version is crucial for certain applications to sieving in orbits (\`a la Bourgain-Gamburd-Sarnak \cite{bourgain-gamburd-sarnak}), but its proof is much more involved. Often it is enough to have Theorem \ref{ssapp}, or its extension to two or a bounded number of primes, which is not more costly. That will be the case for the applications presented in this paper. This, I thought, was enough justification for writing a complete proof of super-strong approximation for prime moduli in one place.

\subsection{Outline of the article}
In Section \ref{nori-strong} we present a proof of the strong approximation theorem of Matthews, Vassertein and Weisfeiler following Nori's proof. Our treatment yields a quantitative version in the sense that it gives a upper bound on the first $p$ for which the surjectivity of the reduction mod $p$ holds in terms of the height of the generating set. Section \ref{bg-sec} is devoted to the Bourgain-Gamburd machine: we state very general conditions on the Cayley graph of an arbitrary finite group that are sufficient to establish a spectral gap. Section \ref{approxsec} is devoted to approximate subgroups of linear groups over finite fields. We prove there the theorem of Pyber-Szab\'o and Breuillard-Green-Tao. In Section \ref{rmtsec} we discuss random matrix products and a general non-concentration on subgroups result for random walks on linear groups. Finally in Section \ref{pfsapp} we combine the results of the preceding three sections to complete the proof of the super-strong approximation theorem in the case of mod $p$ quotients (Theorems \ref{ssapp} and \ref{severalp}). The final section is devoted to applications to the group sieve method and results of Aoun, Jouve-Kowalski-Zywina, Lubotzky-Meiri, Lubotzky-Rosenzweig and Prasad-Rapinchuk on generic properties elements in non virtually solvable linear groups.

\section{Nori's theorem and a quantitative version of strong approximation}\label{nori-strong}

It was Matthews Vaserstein and Weisfeiler \cite{MVW} who first proved  the strong approximation theorem for Zariski-dense subgroups, i.e. Theorem \ref{strongapp}, in the case when $G$ is absolutely simple. Their proof  made use of the (brand new at the time) classification of finite simple groups. Another, classification-free proof was found roughly at the same time and independently by M. Nori, yielding also the case $G$ semisimple, as a consequence of the following general result proved in \cite{nori}.

\begin{theorem}[Nori \cite{nori}]\label{norithm} Let $H$ be a subgroup of $\GL_n(\F_p)$, and $H^{+}$ the subgroup generated by its elements of order $p$. If $p$ is larger than some constant $c(n)$ depending only on $n$, then there is a connected algebraic subgroup $\widetilde{H}$ of $\GL_n$ defined over $\F_p$ such that $H^+$ coincides with $\widetilde{H}(\F_p)^{+}$. Moreover there is a normal abelian subgroup $A \leq H$ such that $[H:AH^+]$ is bounded in terms of $n$ only.
\end{theorem}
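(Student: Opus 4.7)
The plan is to build the algebraic group $\widetilde{H}$ directly from one-parameter unipotent subgroups carried by the order-$p$ elements of $H$, show that its $\F_p$-points recover $H^+$ on the nose, and then extract the abelian normal subgroup via a Jordan-type theorem applied to the $p'$-quotient $H/H^+$. Since $p>n$, any $u\in\GL_n(\F_p)$ of order $p$ has minimal polynomial dividing $X^p-1=(X-1)^p$ and is therefore unipotent with $(u-\id)^n=0$. The truncated logarithm $\log u=\sum_{k=1}^{n-1}(-1)^{k+1}(u-\id)^k/k$ and exponential $\exp N=\sum_{k=0}^{n-1}N^k/k!$ make sense over $\F_p$ as soon as $p>n$ and give mutually inverse $\F_p$-morphisms between the varieties of nilpotents with $N^n=0$ and unipotents with $(u-\id)^n=0$. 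To each order-$p$ element $u\in H$ I attach the closed one-parameter subgroup $U_u:=\{\exp(t\log u):t\in\G_a\}\subset\GL_n$, a copy of $\G_a$ defined over $\F_p$; note that for $t\in\F_p\subset\G_a(\F_p)$ the element $\exp(t\log u)$ is simply the power $u^t$, so $U_u(\F_p)\subseteq H^+$.

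Next I would define $\widetilde{H}$ to be the algebraic subgroup of $\GL_n$ generated by all the $U_u$'s; by Noetherianity this already equals $\langle U_{u_1},\ldots,U_{u_m}\rangle$ for some finite subset, and is therefore connected and defined over $\F_p$. The easy direction $H^+\subseteq\widetilde{H}(\F_p)^+$ follows from the observation above, since every generator $u_i$ of $H^+$ is an order-$p$ element of $\widetilde{H}(\F_p)$. The substantive claim is the reverse inclusion $\widetilde{H}(\F_p)^+\subseteq H^+$: any order-$p$ element $v\in\widetilde{H}(\F_p)$ is unipotent, and I must show $v\in H^+$ by expressing it as a product of $\F_p$-points taken from the chosen $U_{u_i}(\F_p)\subseteq H^+$.

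The hard part is precisely this step, and it is where the restriction $p>c(n)$ enters. The strategy is a dimension-count on products of the $U_{u_i}$'s: there exists $N=N(n)$ such that the product morphism $U_{u_{i_1}}\times\cdots\times U_{u_{i_N}}\to\widetilde{H}$ (for some sequence chosen to cover all the generators) becomes dominant, and its image is a constructible subset of $\widetilde{H}$ of full dimension. Taking $\F_p$-points and applying the Lang--Weil estimate to each such constructible piece (with all dimensions and degrees controlled by $n$), one obtains that the image fills $\widetilde{H}(\F_p)$ up to a set of size $O(p^{\dim\widetilde{H}-1/2})$, which for $p>c(n)$ is negligible compared to the size of $\widetilde{H}(\F_p)^+$. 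A pigeonhole/translation argument then forces every unipotent $\F_p$-point of $\widetilde{H}$ to be of the required product form, giving the inclusion. The main obstacle is to make all these geometric bounds uniform in $n$ (and independent of the particular $H$), so that a single threshold $c(n)$ suffices; this is where one must exercise care with the degrees of the subvarieties cut out by the product map and with the unipotent radical of $\widetilde{H}$, which is not reductive in general.

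For the final assertion, the quotient $H/H^+$ contains no element of order $p$, hence has order coprime to $p$. The positive-characteristic Jordan theorem for finite subgroups of $\GL_n$ of order coprime to the characteristic (Brauer--Feit, sharpened by Collins) produces a normal abelian subgroup $\bar{A}\le H/H^+$ of index bounded by a function $J(n)$. Since $|\bar{A}|$ is coprime to $p$ and the Sylow $p$-subgroups of $H$ lie entirely in $H^+$ (because $p$-elements of $\GL_n(\F_p)$ with $p>n$ have order exactly $p$ by the freshman's dream in characteristic $p$), a Schur--Zassenhaus lifting inside the preimage of $\bar{A}$ yields a normal abelian subgroup $A\le H$ with $AH^+/H^+=\bar{A}$, so that $[H:AH^+]\le J(n)$ as required.
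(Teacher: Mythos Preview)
The paper does not prove this theorem; it refers to Nori's paper and only sketches the special case (Theorem~\ref{suffdense}) where $\widetilde{H}=\G_p$ is simply connected semisimple. Your outline is in the spirit of Nori's construction, but two steps do not go through as written.

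\textbf{The equality $H^+=\widetilde{H}(\F_p)^+$.} Your Lang--Weil step does not give what you claim. Dominance of the product map $U_{u_{i_1}}\times\cdots\times U_{u_{i_N}}\to\widetilde{H}$ means its image contains a dense open, but an $\F_p$-point of that open set need not lie in $\Phi(\F_p^N)$: the fibre over it may have no $\F_p$-point. The honest counting argument (as in the paper's sketch) takes exactly $d=\dim\widetilde{H}$ one-parameter groups whose tangent directions span $\Lie(\widetilde{H})$, so the Jacobian is generically nonzero and generic fibres are finite of bounded size; this yields $|H^+|\gg p^{\dim\widetilde{H}}$, i.e.\ only $[\widetilde{H}(\F_p):H^+]=O(1)$. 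In the simply connected semisimple case that forces equality because there are no proper subgroups of bounded index, but for a general connected $\widetilde{H}$ (possibly with tori, unipotent radical, or non-simply-connected semisimple part) bounded index is strictly weaker than $H^+=\widetilde{H}(\F_p)^+$. Closing this gap is precisely the content of Nori's analysis of exponentially generated groups; the ``pigeonhole/translation'' remark does not supply it.

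\textbf{The normal abelian subgroup.} The Jordan--Schur--Zassenhaus route fails twice. First, Jordan's theorem is about subgroups of $\GL_n$, while $H/H^+$ is a quotient; you have not produced a faithful linear representation of bounded degree for it. Second, and fatally, Schur--Zassenhaus needs $\gcd(|H^+|,|\bar A|)=1$. You correctly observe that $|H/H^+|$ is prime to $p$, but $|H^+|$ is generally \emph{not} a $p$-power (for $H=\SL_2(\F_p)$ one has $H^+=H$ of order $p(p^2-1)$), so no coprimality and no splitting. Even when a complement exists it is not normal in $H$. Nori obtains $A$ by a different route, through the conjugation action of $H$ on $\widetilde{H}$ and the structure of $N(\widetilde{H})/\widetilde{H}$.
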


Observe that if $p\geq n$, then elements of order $p$ in $\GL_n(\F_p)$ are precisely the unipotent matrices: indeed $x^p=1$ is equivalent to $(x-1)^p=0$ for $x \in \GL_n(\F_p)$ and hence to $x=1+n$, where $n$ is a nilpotent matrix.
As Nori explains in \cite[Remark 3.6.]{nori}, the index of $\widetilde{H}(\F_p)^{+}$ in $\widetilde{H}(\F_p)$ is bounded by a function of $n$ only. So the meaning of Nori's theorem is that finite subgroups of $\GL_n(\F_p)$ generated by elements of order $p$ are essentially algebraic subgroups, if $p>c(n)$.

The key feature of Nori's theorem is that no assumption whatsoever is made on the subgroup $H$. Hence Nori's theorem can be seen as a description of \emph{arbitrary} subgroups of $\GL_n(\F_p)$. It can be viewed as complementing the celebrated theorem of Camille Jordan \cite{jordan} on finite subgroups of $\GL_n(K)$ whose order is prime to the characteristic of the field $K$ : such a group admits an abelian subgroup whose index is bounded by some function of $n$ only. Nori's theorem explains what happens when the characteristic divides the order of the finite group: recall that a finite group has an element of prime order $p$ if and only if its order is a multiple of $p$ (Cauchy's theorem).

Jordan's theorem is usually quoted for subgroups of $\GL_n(\C)$, but this stronger version can be derived easily by lifting the group to $\C$ (see \cite[Theorem C]{nori}). In fact Jordan had already proved this stronger version in his original paper: his proof is purely algebraic and applies to any finite subgroup of $\GL_n(K)$ all of whose elements are semisimple (or equivalently to finite subgroups without a non trivial unipotent element), where $K$ is any algebraically closed field (see \cite{breuillard-jordan} for a discussion).

Textbooks presenting Jordan's theorem usually give a different, more geometric treatment, due to Frobenius,  Bieberbach and Blichfeldt. Jordan's own argument seems to have been forgotten for more than a hundred years until Larsen and Pink \cite{larsen-pink} rediscovered it and generalized it considerably to obtain a classification of all finite subgroups of $\GL_d$ in every characteristic. The Larsen-Pink theorem is more general than Nori's result stated above in that it applies to finite subgroups of $\GL_d$ regardless of the field and the size of the characteristic. We will comment on the Larsen-Pink theorem further below, when we discuss approximate subgroups of linear groups. The proof of the Larsen-Pink theorem, which by the way is also independent of the classification of finite simple groups, plays a key role in the structure theorem for approximate subgroups of linear groups (see Theorem \ref{class} below).

For the applications to strong and super-strong approximation, we will not need the full force of Theorem \ref{norithm} above. Rather the following important special case will be sufficient.

\begin{theorem}(Sufficiently Zariski-dense subgroups)\label{suffdense} There is $M=M(d)$ such that the following holds. Let $p>M$ be a prime number and $\G_p \leq \GL_d$ be a semisimple simply connected algebraic group defined over $\F_p$. If a subgroup $H \leq \G_p(\F_p)$ is not contained in a proper algebraic subgroup of $\G_p$ of complexity at most $M$, then it must be equal to $\G_p(\F_p)$.
\end{theorem}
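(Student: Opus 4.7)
The strategy is to extract from $H$, via Nori's Theorem~\ref{norithm}, a connected algebraic subgroup $\widetilde{H}$ of $\G_p$ of complexity bounded in $d$, then to use the avoidance hypothesis first to force $\widetilde{H}$ to be normal in $\G_p$, and second to rule out the possibility of $\widetilde{H}$ being a proper normal subgroup via Jordan-type considerations on the semisimple simply connected quotient. For $p$ larger than Nori's constant $c(d)$, I would apply Theorem~\ref{norithm} to $H$ viewed inside $\GL_d(\F_p)$, obtaining a connected algebraic subgroup $\widetilde{H}$ of $\GL_d$ defined over $\F_p$ with $\widetilde{H}(\F_p)^{+}=H^{+}$ together with a normal abelian subgroup $A\leq H$ such that $[H:AH^{+}]\leq C(d)$. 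For $p$ larger than the Coxeter number of $\G_p$, the logarithm of every unipotent element of $\G_p(\F_p)$ lies in $\Lie(\G_p)$, so Nori's construction of $\widetilde{H}$ from the one-parameter unipotent subgroups $\{\exp(t\log x)\}_{t\in\mathbb{G}_a}$ places $\widetilde{H}$ inside $\G_p$ as a connected algebraic subgroup of complexity bounded by some $N_0(d)$.

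Since $H^{+}$ is characteristic in $H$, the group $H$ normalizes $\widetilde{H}$, so $H\subseteq N_{\G_p}(\widetilde{H})$, an algebraic subgroup of $\G_p$ of complexity bounded by some $N_1(d)$. Choosing $M\geq N_1(d)$, the hypothesis forces $N_{\G_p}(\widetilde{H})=\G_p$, i.e.\ $\widetilde{H}$ is normal in $\G_p$. Semisimplicity and simple connectedness then imply $\widetilde{H}=\prod_{i\in I}G_i$ is a product of simple factors of $\G_p=\prod_i G_i$, with complementary quotient $\G':=\G_p/\widetilde{H}=\prod_{i\notin I}G_i$ also semisimple simply connected. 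Under the projection $\pi:\G_p\to\G'$ the image $\pi(H)$ is a $p'$-subgroup of $\G'(\F_p)$ admitting $\pi(A)$ as a normal abelian subgroup of index $\leq C(d)$. Jordan-type arguments (taking the normalizer of the maximal torus centralizing a regular semisimple element of $\pi(A)$, or otherwise the normalizer of the Levi centralizer of $\pi(A)$, and in the degenerate case where $\pi(A)$ lies in the finite center $Z(\G')$ using that $|\pi(H)|\leq C(d)\cdot|Z(\G')|$ is then bounded) place $\pi(H)$ inside a proper algebraic subgroup of $\G'$ of complexity bounded by some $N_2(d)$ whenever $\G'\neq 1$. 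Pulling back under $\pi$ and taking $M\geq N_2(d)$, the hypothesis rules this out, forcing $\G'=1$ and $\widetilde{H}=\G_p$.

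Consequently $H^{+}=\widetilde{H}(\F_p)^{+}=\G_p(\F_p)^{+}$. By Steinberg's theorem every connected reductive group over a finite field is quasi-split; for a simply connected semisimple quasi-split $\F_p$-group the Bruhat decomposition then gives $\G_p(\F_p)=\G_p(\F_p)^{+}$ as soon as $p$ exceeds a threshold depending on $d$, and simple connectedness is used essentially at this step (without it only bounded index, not equality, could be claimed). Combined with $H^{+}\leq H\leq\G_p(\F_p)$ this yields $H=\G_p(\F_p)$.

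The core input is of course Nori's Theorem~\ref{norithm} itself, which I would take as a black box. The main technical obstacle in the reduction above is the uniform bookkeeping of complexities: one must verify that $\widetilde{H}$, the normalizer $N_{\G_p}(\widetilde{H})$, and the Jordan-type proper subgroup of $\G'$ containing $\pi(H)$ all have complexity bounded by a single explicit function of $d$, and that the thresholds on $p$ coming from Nori's theorem, from the exponential--logarithm correspondence on unipotents, and from Steinberg's generation theorem combine into a single $M=M(d)$.
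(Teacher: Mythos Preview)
Your argument is correct, but it takes a genuinely different route from the paper's.

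The paper proves Theorem~\ref{suffdense} directly, without invoking Theorem~\ref{norithm}. The argument runs as follows: if $H$ had no nontrivial unipotent, Jordan's theorem would give a normal abelian subgroup of bounded index, which already violates sufficient Zariski-density. So $H$ contains a unipotent $h=\exp\xi$. The $\F_p$-span of the $H$-conjugates of $\xi$ is an $H$-invariant subspace of $\Lie(\G_p)$; sufficient Zariski-density forces it to be all of $\Lie(\G_p)$. One then picks unipotents $h_1,\dots,h_D\in H$ whose logarithms form a basis and considers the product map $\Phi(t_1,\dots,t_D)=h_1^{t_1}\cdots h_D^{t_D}$, a polynomial map of bounded degree into $H$. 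A Jacobian/fiber count gives $|\mathrm{Im}\,\Phi|\geq c\,p^{\dim\G}$, whence $[\G_p(\F_p):H]$ is bounded; simple connectedness then gives $H=\G_p(\F_p)$.

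Your approach instead takes Theorem~\ref{norithm} as a black box and deduces Theorem~\ref{suffdense} by a structural reduction: Nori gives $\widetilde{H}$; the avoidance hypothesis forces $\widetilde{H}\trianglelefteq\G_p$, hence a product of simple factors; a Jordan-type argument on the quotient kills the complement; and Steinberg's $\G_p(\F_p)=\G_p(\F_p)^+$ finishes. This is valid, and the complexity bookkeeping you flag (for $\widetilde{H}$, for $N_{\G_p}(\widetilde{H})$, for the Levi normalizer in $\G'$) can indeed be carried out uniformly in $d$.

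The trade-off is this: the paper's proof is elementary and self-contained, and as the paper explicitly notes, it is the \emph{seed} of Nori's proof of Theorem~\ref{norithm} (``pushing this idea a bit further allows Nori to also prove Theorem~\ref{norithm}''). Your deduction therefore runs logically downhill---from the harder theorem to its easier special case---whereas the paper's argument isolates precisely the counting mechanism that makes both theorems work. Your route is cleaner if Nori is already on the shelf; the paper's route explains \emph{why} Nori's theorem holds.
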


We say informally that a closed algebraic subvariety of $\GL_d$ has complexity at most $M$ if it can be defined as the vanishing locus of a finite set of polynomials such that the sum of their degrees in each variable is at most $M$. See \cite{breuillard-green-tao-linear} for background on this notion. It is particularly useful in positive characteristic: saying that a finite subgroup of $\GL_d(\overline{\F_p})$ is algebraic is meaningless, because every finite subgroup is an algebraic subset with several (possibly many) irreducible components. However putting a bound on the complexity forces a bound on the number of irreducible components (\cite[Lemma A.4]{breuillard-green-tao-linear}) and hence restricts the class of finite subgroups drastically and leads to interesting statements, such as the above.

\bigskip

We now sketch Nori's proof of Theorem \ref{suffdense}. A similar argument is due to Gabber, see \cite[Thm 12.4.1]{katz}. Pushing this idea a bit further allows Nori to also prove Theorem \ref{norithm}.


\bigskip

\begin{proof}(sketch) If $H$ had no non trivial unipotent element, it would have an abelian subgroup of bounded index by Jordan's theorem. But this would violate the assumption that $H$ is sufficiently Zariski-dense. So $H$ contains a unipotent element, which we may write in the form $h=\exp \xi$, for some nilpotent matrix $\xi$. The $\F_p$-span $V_H$ of all $H$-conjugates of $\xi$ is invariant under the adjoint action of $H$. The assumption that $H$ is sufficiently Zariski-dense implies that $V_H$ must be the full $\F_p$-Lie algebra of $\G_p$ in $\emph{gl}_d(\F_p)$. Pick unipotent elements $h_1,\ldots,h_d \in H$ such that the corresponding $\xi_i$'s form a basis of $Lie(\G_p)$.

Now consider the map $\Phi: \F_p^{\dim \G} \to \G_p(\F_p)$, $(t_1,\ldots,t_d) \mapsto h_1^{t_1} \cdot \ldots \cdot h_d^{t_d}$. Note that $\Phi$ is a polynomial map whose degree is bounded in terms of $d$ only. Its image lies in $H$. We claim that there is a constant $c=c(d)>0$ such that $|Im \Phi| \geq c p^{d}$. Indeed, the jacobian of $\Phi$ is not identically zero, so outside its vanishing locus (a proper subvariety, hence a subset of size $O(p^{d-1})$) the fibers of $\Phi$ are of bounded cardinality. This implies the desired bound.

Now since there are positive constants $c_1,c_2$ such that $c_1p^d \leq |\G_p(\F_p)| \leq c_2 p^d$ (e.g. see \cite[Lemma 3.5.]{nori}), we get that the index $[\G_p(\F_p):H]$ is bounded. However since $\G$ is simply connected, $\G_p(\F_p)$ is an almost direct product of quasi-simple groups and thus has no subgroups of bounded index when $p$ is large (Kneser-Tits for $\F_p$, see \cite{platonov-rapinchuk}, see also Remark \ref{nosmallindex}). Hence $H=\G_p(\F_p)$.
\end{proof}

Nori's proof of strong approximation (i.e. of Theorem \ref{strongapp}) is based on Theorem \ref{suffdense} alone. We will explain this argument below. It turns out that this argument even yields a quantitative lower bound on the first prime number for which we can claim that $\Gamma_p=\G_p(\F_p)$ in terms of the height of the generating set of $\Gamma$. Namely:

\begin{theorem}(Strong approximation, quantitative version)\label{sappq} Suppose $\G \leq \GL_d$ is a connected, simply connected, semisimple algebraic group defined over $\Q$. Then there are constants $p_0,C_0 \geq 1$ such that if $S \subset \G(\Q)$ is a finite symmetric set generating a Zariski-dense subgroup $\Gamma=\langle S \rangle$ of $\G$, and $M_S$ denotes the maximal height of an element of $S$, then for every prime number $p> \max\{p_0,M_S^{C_0}\}$, the reduction $\Gamma_p$ of $\Gamma$ is equal to $\G_p(\F_p)$.
\end{theorem}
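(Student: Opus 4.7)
The plan is to invoke Theorem \ref{suffdense}: it suffices to show that for $p > \max\{p_0, M_S^{C_0}\}$ with $C_0$ depending only on $d$, the reduction $\Gamma_p$ is not contained in any proper algebraic subgroup of $\G_p$ of complexity at most $M = M(d)$. In fact we will prove the slightly stronger statement that $\Gamma_p$ is not contained in any proper Zariski-closed subvariety of $\G_p$ of complexity at most $M$. Let $U$ denote the $\Q$-vector space of regular functions on $\G$ of degree at most $M$; its dimension $D$ is bounded in terms of $d$ alone, and proper subvarieties of $\G$ of complexity $\leq M$ correspond to nonzero elements of $U$.

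The first step is to produce Zariski-density witnesses inside $\Gamma$ of bounded word length. Since $\Gamma$ is Zariski-dense in $\G$, there exist $\gamma_1, \ldots, \gamma_D \in \Gamma$ such that the evaluation functionals $P \mapsto P(\gamma_i)$ are linearly independent on $U$. Fix a $\Z$-basis $P_1, \ldots, P_D$ of (an integral form of) $U$ of bounded height. By the escape-from-subvarieties principle of Eskin--Mozes--Oh, applied to the proper subvariety of $\G^D$ where the matrix $(P_j(g_i))_{i,j}$ is singular, we may choose each $\gamma_i$ to be a word in $S$ of length at most $N = N(d)$, uniformly in $\Gamma$ and $S$.

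Next we control heights. Each matrix entry of $\gamma_i \in S^{\leq N}$ is a polynomial of degree $N$ in the entries of elements of $S$, and so $\gamma_i$ has height at most $(CM_S)^N$; evaluating $P_j$ produces rational numbers of height at most $(C'M_S)^{NM}$; and the determinant of the $D \times D$ matrix $\mathcal{M} := (P_j(\gamma_i))_{i,j}$ has numerator and denominator bounded in absolute value by $M_S^{C_0}$ for some $C_0 = C_0(d)$. By construction $\det \mathcal{M}$ is a nonzero rational number whose denominator involves only primes dividing entries of $S$, themselves of size at most $M_S$. Hence $\det \mathcal{M}$ has trivial $p$-adic valuation, and reduces mod $p$ to a nonzero element of $\F_p$, for every prime $p > M_S^{C_0}$.

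For such a prime $p$ (taken also larger than the $p_0$ of Theorem \ref{suffdense}), suppose for contradiction that $\Gamma_p$ is contained in a proper subvariety of $\G_p$ of complexity $\leq M$, cut out by some $0 \neq \overline{R} \in U \otimes \F_p$. Then $\overline{R}(\gamma_{i,p}) = 0$ for every $i$, which forces the coordinate vector of $\overline{R}$ in the basis $\overline{P_1}, \ldots, \overline{P_D}$ to lie in the kernel of the transpose of $\mathcal{M} \bmod p$, contradicting $\det \mathcal{M} \not\equiv 0 \bmod p$. Hence $\Gamma_p$ escapes every proper subvariety of $\G_p$ of complexity $\leq M$, and Theorem \ref{suffdense} yields $\Gamma_p = \G_p(\F_p)$. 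The main technical obstacle is the escape-from-subvarieties step: the word-length bound $N(d)$ must be uniform in $S$ and $\Gamma$, and this is precisely the content of the Eskin--Mozes--Oh induction on dimension, without which the argument would give no effective dependence on the height of $S$.
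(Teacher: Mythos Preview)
Your argument is correct and follows the same overall strategy as the paper: reduce to Nori's theorem (Theorem \ref{suffdense}) by exhibiting a height-bounded integer certificate of sufficient Zariski-density that survives reduction mod $p$ for $p$ large. The paper packages this differently. It first reduces to two generators $a,b \in S^c$ (via a separate lemma), then uses the fixed variety $\X \subset \G\times\G$ of non-generating pairs constructed in Lemma \ref{dense}: a single defining polynomial $P_k$ with $P_k(a,b)\neq 0$ is the certificate, and Lemma \ref{densep} (proved by an ultraproduct argument) shows that $(a,b)\notin \X_p$ is equivalent to the pair not lying in any proper subgroup of $\G_p$ of bounded complexity.

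Your route avoids both the reduction to two generators and the representation-theoretic construction of $\X$, replacing them by a direct linear-algebra certificate: the nonvanishing of $\det\mathcal{M}$ for $D$ points of bounded word length in $\Gamma$. This is somewhat more elementary and transparent, and it proves the stronger conclusion that $\Gamma_p$ escapes all proper subvarieties (not just subgroups) of complexity $\leq M$. The cost is that you must invoke Eskin--Mozes--Oh escape in $\G^D$ rather than in $\G\times\G$; this is harmless since the singular locus of the evaluation matrix has complexity bounded in $d$. One small point worth tightening: your sentence ``proper subvarieties of $\G$ of complexity $\leq M$ correspond to nonzero elements of $U$'' is not literally true, but what you actually use---that any such subvariety is \emph{contained} in the zero set of some nonzero $P\in U$---is immediate from the definition of complexity and suffices.
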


Here the height $H(s)$ of an element $s \in \GL_d(\Q)$ is defined naively as the maximum of the numerators and denominators appearing in the expressions of the matrix coefficients of $s$ as irreducible fractions. The bound $p_0$ is related to the bound $c(n)$ from Nori's theorem and to $p_M$ from Lemma \ref{densep} below. There is very little control on this bound in general (see \cite[Appendix]{salehi-golsefidy-varju} for a discussion of this issue).

Several other proofs and extensions of Theorem \ref{strongapp} (to groups defined over number fields,  to positive characteristic etc.) have since been found. For those we refer the reader to the original articles, in particular \cite{weisfeiler}, \cite{nori}, \cite{hrushovski-pillay}, \cite{pink}, and to the chapter on strong approximation in the recent book by Lubotzky and Segal \cite{lubotzky-segal} or in Nikolov's lecture notes in \cite[chapter II]{klopsch-nikolov-voll}. We also recommend reading Rapinchuk's recent survey \cite{rapinchuk-survey}, which gives a thorough overview of strong approximation.



\bigskip

We now pass to the derivation of Theorem \ref{sappq} from Nori's theorem. First, we replace the naive height with another height, which is better suited for our purposes since it is sub-additive. Given $a \in \GL_d(\Q)$, set
$$h(a):=\sum_{p,\infty} \log^+{||a||_p},$$
where the sum is over all prime numbers $p$ as well as the infinite place $\infty$. Here $\log^+:=\max\{\log,0\}$, and $||a||_p$ denotes $\max_{ij} |a_{ij}|_p$, the maximum $p$-adic absolute value of a matrix entry $a_{ij}$ of $a$, while $||a||_\infty$ is the operator norm of $a$ for the standard euclidean norm on $\R^d$. The following is straightforward:

\begin{lemma} (a) The height $h(a)$ is sub-additive, i.e.  $\forall a,b \in \GL_d(\Q)$,
$$h(ab) \leq h(a) + h(b),$$
 and
(b) it is comparable to the naive height $H(a)$, namely $\forall a$,
$$H(a) \leq e^{h(a)} \leq d (H(a))^{d^2}.$$

\end{lemma}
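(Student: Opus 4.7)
For part (a), the plan is to verify submultiplicativity $||ab||_v \le ||a||_v \cdot ||b||_v$ at each place $v$ and then pass to $\log^+$. At a finite prime $p$ this is immediate from the ultrametric inequality applied entry by entry: $|(ab)_{ij}|_p = |\sum_k a_{ik}b_{kj}|_p \le \max_k |a_{ik}|_p |b_{kj}|_p \le ||a||_p \cdot ||b||_p$; at $\infty$ it is the standard submultiplicativity of the Euclidean operator norm. Submultiplicativity of $||\cdot||_v$ upgrades to sub-additivity of $\log^+ ||\cdot||_v$ via the trivial inequality $\log^+(xy) \le \log^+ x + \log^+ y$ valid for all $x,y > 0$ (if $xy \le 1$ the left-hand side is zero; otherwise $\log(xy) = \log x + \log y \le \log^+ x + \log^+ y$). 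Summing over all places yields (a).

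For part (b), the idea is to reduce to the scalar case via the product formula and then handle the passage from scalars to matrices by crude bookkeeping. For a rational $r = u/v$ in lowest terms, I would check directly that $\prod_w \max(|r|_w, 1) = \max(|u|, |v|) = H(r)$: because $\gcd(u,v) = 1$ the finite-place contributions collapse to $\prod_p p^{v_p(v)} = |v|$, and combining with the archimedean factor $\max(|u/v|, 1)$ gives $\max(|u|, |v|)$. Taking logs this reads $h(r) = \log H(r)$ in the scalar case.

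To pass to matrices, note that for every finite place $\log^+ ||a||_p = \max_{ij} \log^+ |a_{ij}|_p$ (since $||a||_p$ is a max of entries and $\log^+$ is monotone), whereas at $\infty$ one only has the sandwich $\max_{ij} \log^+|a_{ij}|_\infty \le \log^+ ||a||_\infty \le \log d + \max_{ij} \log^+|a_{ij}|_\infty$, the upper bound coming from the elementary comparison $||a||_\infty \le d \max_{ij} |a_{ij}|_\infty$ between the Euclidean operator norm and the entry-wise max norm. The lower bound $H(a) \le e^{h(a)}$ then follows from the trivial inequality $\sum_v \max_{ij}(\cdot) \ge \max_{ij} \sum_v(\cdot)$ combined with the scalar identity applied to the extremal entry: $h(a) \ge \max_{ij} \sum_v \log^+ |a_{ij}|_v = \max_{ij} \log H(a_{ij}) = \log H(a)$. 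For the upper bound $e^{h(a)} \le d\,H(a)^{d^2}$, one swaps the max for a sum at each place, getting $h(a) \le \log d + \sum_{ij} h(a_{ij}) = \log d + \sum_{ij} \log H(a_{ij}) \le \log d + d^2 \log H(a)$, and exponentiates. The only mildly delicate point is tracking the $d$ correction at the archimedean place; otherwise the lemma is pure bookkeeping around the product formula, so I do not anticipate any real obstacle.
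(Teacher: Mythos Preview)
Your argument is correct in every detail; the paper itself omits the proof entirely, simply declaring the lemma ``straightforward,'' and what you have written is exactly the natural unpacking of that word. There is nothing to compare against and nothing to fix.
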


We conclude that for all $a_1,\ldots,a_n \in \GL_d(\Q)$,

\begin{equation}\label{submul}
H(a_1\cdot \ldots \cdot a_n) \leq d^n (H(a_1) \cdot \ldots \cdot H(a_n))^{d^2}
\end{equation}

Combined with the next lemma, this inequality allows us to assume, in the proof of Theorem \ref{sappq} that $\Gamma$ is generated by two elements, i.e. that  $S:=\{1,a^{\pm 1},b^{\pm 1}\}$.

\begin{lemma}(Reduction to $2$ generators) Let $\G$ be a semisimple algebraic group over $\C$. Then there is $c>0$ such that given any finite symmetric subset $S \subset \G(\C)$, with $1 \in S$, generating a Zariski dense subgroup of $\G$, the bounded power $S^c$ contains two elements $a,b$ which alone already generate a Zariski-dense subgroup.
\end{lemma}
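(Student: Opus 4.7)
The plan is to realise the ``bad'' pairs as a proper Zariski-closed subvariety of $\G \times \G$ whose complexity is bounded solely in terms of $\G$, and then to combine the Zariski density of $\Gamma \times \Gamma$ in $\G \times \G$ with an escape-from-subvariety argument in the style of Eskin--Mozes--Oh to produce good pairs of bounded word length in $S$.

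Set $Z := \{(a,b) \in \G(\C) \times \G(\C) : \overline{\langle a, b\rangle} \neq \G\}$. The main claim is that $Z$ is contained in a proper Zariski-closed subvariety of $\G \times \G$ whose complexity depends only on $\G$. The cleanest route is a word-map computation: fix $N = N(\dim \G)$ large, and consider the morphism $\phi_N : \G \times \G \to \G^{m_N}$ recording all words of length at most $N$ in $a, b, a^{-1}, b^{-1}$. For $N$ large enough, the rank of $d\phi_N|_{(a,b)}$ equals $\dim \overline{\langle a, b \rangle}$; hence $Z$ is defined by the simultaneous vanishing of bounded-degree minors, and its properness amounts to the existence of Zariski-dense $2$-generated subgroups of $\G$, which is Borel's classical theorem on free subgroups of semisimple algebraic groups. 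An alternative, more structural route invokes Dynkin's classification of maximal proper closed subgroups of $\G$, which come in finitely many $\G$-conjugacy classes $M_1, \ldots, M_r$ of bounded complexity; one then places $Z$ inside $\bigcup_i \overline{\G \cdot (M_i \times M_i)}$, handling the residual case in which $\overline{\langle a, b\rangle}$ is finite separately via Jordan's theorem.

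With this bounded-complexity claim in hand, apply the Eskin--Mozes--Oh escape-from-subvariety lemma to the block-diagonal embedding $\G \times \G \hookrightarrow \GL_{2d}$, to the symmetric generating set $\widetilde{S} := (S \times \{1\}) \cup (\{1\} \times S)$ of $\Gamma \times \Gamma$ (which generates because $1 \in S$ and $S = S^{-1}$), and to the subvariety $Z$. Zariski density of $\Gamma$ in $\G$ gives Zariski density of $\Gamma \times \Gamma$ in $\G \times \G$, so $\Gamma \times \Gamma \not\subset Z$. The escape lemma then yields a constant $c = c(\G)$, depending only on $\G$ through the complexity of $Z$ and the ambient dimension $2d$, such that $\widetilde{S}^c \not\subset Z$. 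Any element $(a,b) \in \widetilde{S}^c \setminus Z$ lies in $S^c \times S^c$ and has $\langle a, b\rangle$ Zariski-dense in $\G$, which is the conclusion.

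The main obstacle is the uniform complexity bound on $Z$. It is straightforward to exhibit \emph{some} pair in $\Gamma \times \Gamma$ generating Zariski-densely, but forcing the word length to depend on $\G$ alone and not on $S$ requires the uniform complexity control. The word-map route needs a careful tangent-space computation to pin down $N(\dim \G)$, while the classification route needs to absorb finite maximal closed subgroups into the bounded-complexity framework; both are standard but technical, and constitute the real content of the lemma.
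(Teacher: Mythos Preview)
Your overall strategy is exactly the paper's: exhibit the locus of non-generating pairs as a proper closed subvariety of $\G\times\G$ of complexity bounded in terms of $\G$ alone, then invoke the Eskin--Mozes--Oh escape lemma for the set $\widetilde S=(S\times\{1\})\cup(\{1\}\times S)$. The paper's proof is just a pointer to \cite{breuillard-annals} together with the hint ``Jordan's theorem plus escape from subvarieties'', and the subvariety in question is precisely the $\X$ constructed in the very next lemma (Lemma~\ref{dense}).

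Where you diverge is in how you propose to build this subvariety. The paper's argument (spelled out in Lemma~\ref{dense}) is representation-theoretic: using Jordan's theorem one produces finitely many irreducible $\G$-modules $\rho_1,\ldots,\rho_k$ such that a subgroup is Zariski-dense if and only if it acts irreducibly on each $\rho_i$; irreducibility of $\rho_i(\langle a,b\rangle)$ is then expressed as the algebraic condition that the $\rho_i(w(a,b))$ for boundedly many words $w$ span $\mathrm{End}(V_i)$. This gives $\X$ directly, with no case analysis. Your Dynkin-plus-Jordan route would also work in principle, but is heavier and requires the extra step of absorbing the finite (or more generally non-connected) maximal subgroups into the bounded-complexity picture.

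Your word-map route, however, has a genuine gap as stated. The claim that $\mathrm{rk}\, d\phi_N|_{(a,b)}=\dim\overline{\langle a,b\rangle}$ cannot be right: $d\phi_N$ is a linear map out of a $2\dim\G$-dimensional space, and there is no mechanism by which its rank detects the dimension of the Zariski closure of the discrete group $\langle a,b\rangle$ (think already of $(a,b)=(1,1)$). What does work, and is what the paper actually uses, is the \emph{span} of the finitely many matrices $\rho_i(w(a,b))$ inside $\mathrm{End}(V_i)$, not a differential. I would drop the differential formulation and either adopt the paper's span criterion or commit fully to the Dynkin route.
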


\begin{proof} This is Proposition 1.8. from \cite{breuillard-annals}. The proof is fairly classical, and relies on Jordan's theorem and the Eskin-Mozes-Oh escape from subvarieties lemma (see e.g. \cite[Lemma 3.11]{breuillard-green-tao-linear}).
\end{proof}

\begin{lemma}(Generating is an algebraic condition)\label{dense} Let $\G\leq \GL_d$ be a semisimple algebraic group defined over $\Q$.
There is a proper closed algebraic subvariety $\X \leq \G \times \G$ defined over $\Q$, whose points are precisely the pairs of elements in $\G$ which are contained in a proper algebraic subgroup of $\G$.
\end{lemma}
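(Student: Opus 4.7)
The plan is to prove this in three steps: non-emptiness, properness, and Zariski-closedness. Non-emptiness is immediate from $(1_\G, 1_\G) \in \X$, and properness $\X \neq \G \times \G$ follows from the preceding lemma, which provides a Zariski-dense 2-generated subgroup of $\G$. The heart of the matter is to show $\X$ is Zariski-closed (defining everything over $\Q$ will be automatic since the construction is Galois-equivariant).

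I will decompose $\X = \X_1 \cup \X_2$, where $\X_1$ consists of pairs contained in some proper positive-dimensional closed subgroup of $\G$, and $\X_2$ consists of pairs lying in a Lie primitive finite subgroup (a finite subgroup not contained in any proper positive-dimensional closed subgroup). The set $\X_1$ will be realized as the image of an incidence correspondence over the projective parameter space of proper, nonzero, non-ideal Lie subalgebras of $\g := \Lie(\G)$. Concretely, the Jacobi identity cuts out a closed subvariety $\mathcal{L}$ of the Grassmannian $\coprod_k \Grass_k(\g)$, and excluding the finitely many ideals and the zero subalgebra leaves a quasi-projective subset whose Zariski closure in $\mathcal{L}$ is projective; using this as parameter space, the incidence variety
$$Z := \bigl\{(a, b, \h) \in \G \times \G \times \mathcal{L} : \Ad(a)\h = \h,\ \Ad(b)\h = \h\bigr\}$$
is Zariski-closed, with proper projection $\pi$ to $\G \times \G$, so $\X_1 = \pi(Z)$ is Zariski-closed. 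The identification $\X_1 = \pi(Z)$ uses that a pair lies in a proper positive-dimensional closed subgroup $H$ if and only if $a, b$ stabilize some proper nonzero non-ideal Lie subalgebra under the adjoint action: if $\Lie(H^\circ)$ itself is non-ideal, take $\h := \Lie(H^\circ)$; if $\Lie(H^\circ)$ is an ideal (which can happen only if $\G$ has several simple factors), one descends to a smaller non-ideal subalgebra inside a simple factor not covered by $H^\circ$.

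For $\X_2$, I use the classical fact that any semisimple algebraic group $\G$ over $\C$ has only finitely many conjugacy classes of Lie primitive finite subgroups (going back to Jordan's theorem combined with the Borel--Serre analysis of primitive subgroups via irreducibility of the adjoint representation on each simple factor). For each such class with representative $F \leq \G$, the subset $\G \cdot (F \times F) \subseteq \G \times \G$ (under diagonal conjugation) is a finite union of $\G$-orbits, each of which is Zariski-closed because the generating elements are of finite order, hence semisimple, so the corresponding representation of $F_2$ is $\G$-completely reducible and its orbit in $\G \times \G$ is closed. Taking the finite union over the conjugacy classes of Lie primitive subgroups yields $\X_2$ as a closed subvariety. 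The main obstacle is establishing the finiteness of primitive conjugacy classes, which rests on Jordan's theorem and the structure theory of finite subgroups of simple algebraic groups; once this is in hand, $\X = \X_1 \cup \X_2$ is Zariski-closed and defined over $\Q$, as required.
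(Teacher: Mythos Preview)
Your decomposition $\X = \X_1 \cup \X_2$ is sound, and the treatment of $\X_2$ via finiteness of Lie primitive finite subgroups and closedness of their orbits (using $\G$-complete reducibility in characteristic zero) is correct. The gap is in the identification $\X_1 = \pi(Z)$, which fails when $\G$ has more than one simple factor. Take $\G = \G_1 \times \G_2$ with both factors simple, let $F \leq \G_2$ be a Lie primitive finite subgroup generated by $f_1,f_2$, and let $a_1,a_2 \in \G_1$ generate a Zariski-dense subgroup and have infinite order. The pair $\bigl((a_1,f_1),(a_2,f_2)\bigr)$ lies in the proper positive-dimensional subgroup $\G_1 \times F$, hence in $\X_1$, and is not in $\X_2$ since it lies in no finite subgroup. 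But the Zariski closure of the group it generates is exactly $\G_1 \times F$, and the only subalgebras of $\g_1 \oplus \g_2$ normalized by $\G_1 \times F$ are $0$, $\g_1$, $\g_2$, and $\g$ --- all of them ideals. So this pair is not in $\pi(Z)$. Your suggested fix (``descend to a smaller non-ideal subalgebra inside a simple factor not covered by $H^\circ$'') cannot work here: the factor $\G_2$ is not covered by $H^\circ = \G_1$, but $F$ is Lie primitive in $\G_2$ and normalizes no proper nonzero subalgebra of $\g_2$.

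The paper sidesteps this by a more uniform representation-theoretic device: it produces a finite list of irreducible $\G$-modules $\rho_1,\dots,\rho_k$ with the property that a subgroup $\Gamma$ is proper if and only if $\rho_i(\Gamma)$ fails to span $\mathrm{End}(V_i)$ for some $i$. The list is built by taking, for each simple factor, an irreducible module of dimension exceeding the Jordan bound (so no finite subgroup can act irreducibly --- this handles your $\G_1 \times F$ example directly), together with the irreducible constituents of the exterior powers (to detect invariant subspaces, hence invariant subalgebras, via Pl\"ucker). The condition ``$\rho_i(\langle a,b\rangle)$ does not span $\mathrm{End}(V_i)$'' is visibly polynomial in the entries of $\rho_i(w(a,b))$ for boundedly many words $w$, giving closedness at once. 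Your incidence-correspondence idea is salvageable if you enlarge the parameter space to include, for each proper normal connected subgroup $N \lhd \G$, the conjugacy classes of Lie primitive finite subgroups of $\G/N$; but at that point you are essentially re-deriving the paper's Jordan-based argument factor by factor.
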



\begin{proof} This is well-known (see e.g. \cite[Theorem 11.6]{guralnick-tiep}).  We work over an algebraic closure of $\Q$ and  show that $\X$ is a closed algebraic subset. Since $\X$ is invariant under Galois automorphisms, it will automatically be defined over $\Q$. We claim that there are finitely many absolutely irreducible finite dimensional non trivial modules of $\G$, say $\rho_1,\ldots,\rho_k$ such that a subgroup $\Gamma \leq \G$ is not Zariski-dense if and only if $\rho_i(\Gamma)$ fixes a line in the representation space $V_i$ of $\rho_i$ for some $i=1,\ldots,k$. And this happens if and only if $\rho_i(\Gamma)$ fixes a non trivial subspace of $V_i$ for some $i=1,\ldots,k$. This last condition clearly forms an algebraic condition, because it is equivalent to say that $\rho_i(\Gamma)$ does not span the ring of endomorphisms of $V_i$. Moreover the span of $\rho_i(\Gamma)$ is spanned by the $\rho_i(w(a,b))$'s for a bounded set of words $w$. So we indeed have an algebraic condition on the pair $a,b$. Finally $\X$ is proper, because every semisimple algebraic group can be generated by two elements (see e.g. \cite{kuranishi}).

To prove the claim, note that if $\HH$ is a proper closed algebraic subgroup of $\G$, then either it is finite in projection to one of the simple factors of $\G$, or its Lie algebra is not preserved under the adjoint action of $\G$ on $Lie(\G)$. Let $j(d)$ the bound from Jordan's theorem, so that every finite subgroup of $\GL_d$ has a normal abelian subgroup of index at most $j(d)$. For each simple factor $\G_i$ pick an irreducible module whose dimension is larger than $j(d)$, so that no finite subgroup of $\G_i$ can act irreducibly on it. We thus have found finitely many irreducible modules, say $\pi_1,\ldots,\pi_m$ of $\G$ with the property that if a subgroup acts irreducibly on each of them, it must be Zariski-dense. Adding to this list all the non trivial irreducible submodules of the wedge powers $\Lambda^* \pi_i$, we obtain the desired list of modules $\rho_1,\ldots,\rho_k$.
\end{proof}

Now, reducing modulo a large prime $p$, we obtain:

\begin{lemma}(Generating mod p)\label{densep} With the assumptions of the previous lemma, there is $M_0 \geq 1$ such that $\forall M\geq M_0$, there is $p_M>0$ such that if $p>p_M$ is a prime number, the reduction of $\X$  mod $p$ is a proper algebraic subvariety of $\X_p \leq \G_p \times \G_p$ defined over $\F_p$  whose points are precisely the pairs of elements in $\G_p$ which are contained in a proper algebraic subgroup of $\G_p$ of complexity at most $M$.
\end{lemma}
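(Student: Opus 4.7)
The plan is to take the defining polynomial equations of $\X$ produced by the proof of Lemma \ref{dense}, reduce them modulo $p$, and then re-run that same characterization argument over $\F_p$, with the bounded-complexity hypothesis playing the role that Jordan's theorem plays in characteristic zero.

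Inspecting the proof of Lemma \ref{dense}, the variety $\X$ is cut out by the simultaneous vanishing, for at least one $i\in\{1,\ldots,k\}$, of all maximal minors of the matrix $(\rho_i(w(a,b)))_{w\in\mathcal{W}}$, where $\mathcal{W}$ is a fixed finite set of words of length bounded in terms of $d$. The resulting polynomials have coefficients in $\Z[1/N]$ for some integer $N$ depending only on $\G$, $d$, and the chosen $\rho_i$'s, and their degrees are bounded in terms of $d$ alone. For any prime $p\nmid N$, let $\X_p\leq\G_p\times\G_p$ be the closed $\F_p$-subvariety cut out by the mod-$p$ reductions of these polynomials. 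That $\X_p$ is proper follows from Kuranishi's theorem: some pair $(a_0,b_0)\in\G(\Z[1/N'])$ for a suitable $N'$ generates a Zariski-dense subgroup of $\G$, so $(a_0,b_0)\notin\X$, and for all sufficiently large $p$ its reduction lies in $(\G_p\times\G_p)(\F_p)\setminus\X_p$.

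The content of the lemma is the identification of $\X_p(\overline{\F_p})$ with the locus of pairs contained in some proper algebraic subgroup of $\G_p$ of complexity $\leq M$, for $M\geq M_0$ and $p\geq p_M$. For $p$ large, each $\rho_i$ reduces to an absolutely irreducible representation of $\G_p$; by Burnside's criterion applied over $\overline{\F_p}$, $(a,b)\in\X_p$ iff $\rho_i(\langle a,b\rangle)$ stabilizes a proper nonzero subspace of $V_i\otimes\overline{\F_p}$ for some $i$, in which case $\langle a,b\rangle$ is contained in the $\G_p$-stabilizer of that subspace, a proper algebraic subgroup of complexity bounded by an absolute constant $M_0=M_0(d,\G)$, which we fix as the $M_0$ of the lemma. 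Conversely, suppose $(a,b)$ lies in a proper closed algebraic subgroup $H\leq\G_p$ of complexity $\leq M$. Imitating the end of the proof of Lemma \ref{dense}, either $\Lie(H)\subsetneq\Lie(\G_p)$ fails to be $\Ad(\G_p)$-invariant, and the wedge-power construction of Lemma \ref{dense} yields an $H$-stable proper subspace in the representation space $V_j$ of some $\rho_j$; or $\Lie(H)$ is $\Ad(\G_p)$-invariant, and then $H^\circ$ is an almost direct product of some simple factors of $\G_p$, while the projection of $H$ to some omitted factor $\G_{p,j}$ is finite of order $O_M(1)$ (a finite algebraic subvariety of $\G_p$ of complexity $\leq M$ has cardinality bounded in terms of $M$). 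Choosing $p_M$ larger than this order bound, the offending finite projection has order coprime to $p$, lifts to characteristic zero, and by Jordan's theorem acts reducibly on the pre-chosen high-dimensional module $\pi_j$. Either way, $\rho_j(\langle a,b\rangle)$ fixes a proper subspace for some $j$, hence $(a,b)\in\X_p$.

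The main obstacle is this finite-projection step in positive characteristic: the complexity bound $M$ is precisely what forces the problematic finite images to have order bounded in $M$ only, allowing us to reduce to the tame (prime-to-$p$) situation and invoke Jordan's theorem exactly as in characteristic zero. All the finitely many conditions on $p$ accumulated along the way (avoiding bad primes, ensuring absolute irreducibility of the $\rho_i$'s mod $p$, and coprimality to the bounded finite orders) are absorbed into the single threshold $p_M$.
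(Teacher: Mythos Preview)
Your argument is correct and is a genuinely different route from the paper's. The paper proves this lemma by a compactness (ultraproduct) argument: assuming no $p_M$ works, it extracts an infinite sequence of counterexamples over growing primes, forms their ultraproduct, and lands in a characteristic-zero field $K$ where the resulting pair contradicts the definition of $\X$ from Lemma \ref{dense}. This is short and requires no re-analysis of the representation-theoretic construction, but it is entirely non-effective.

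You instead re-run the proof of Lemma \ref{dense} directly over $\overline{\F_p}$, using the bounded-complexity hypothesis to control the one place where positive characteristic could cause trouble: the finite projection to an omitted simple factor. Bounding its order by $O_M(1)$ lets you take $p$ coprime to that order, lift to characteristic zero, and invoke Jordan exactly as before. This is more hands-on and in principle yields an effective $p_M$ (modulo effectivizing the complexity bounds for stabilizers and component counts), which the paper's ultraproduct proof cannot. The cost is that you must check several compatibility statements (the $\rho_i$ remain absolutely irreducible mod $p$, bounded-complexity subgroups are smooth for large $p$ so that the Lie algebra dichotomy is clean, the lifting of prime-to-$p$ finite groups), all of which are standard for $p$ large relative to the fixed data, but which the paper's soft transfer avoids entirely.
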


\begin{proof} First observe that there is a bound $M_0$ such that every proper algebraic subgroup of $\G$ is contained in a proper algebraic subgroup of complexity at most $M_0$. This follows from the discussion in the proof of Lemma \ref{dense}, since a proper algebraic subgroup will either stabilize a subalgebra of $Lie(\G)$ which is not an ideal, or will stabilize a proper subspace of some $V_i$. Each of these stabilizers have bounded complexity. Now to prove  the lemma we argue by contradiction. If no such $p_M$ can be found, there must be an infinite sequence of primes $p_i< p_{i+1}$ and pairs $(a_i,b_i) \in \G_{p_i}(\overline{\F_{p_i}})$ such that either for all $i$, $(a_i,b_i) \in \X_{p_i}$ and are not contained in a proper algebraic subgroup of $\G_{p_i}$ of complexity at most $M$, or for all $i$, $(a_i,b_i) \notin \X_{p_i}$ and are contained in a proper algebraic subgroup of $\G_{p_i}$ of complexity at most $M$. The ultraproduct of the $\X_{p_i}$ coincides with $X \otimes_\Q K$, where $K$ is the ultraproduct of the finite fields $\F_{p_i}$. This gives rise to a pair $(a,b)$ in the associated ultraproduct, which, in the first case, belongs to $\X(K)$ and generates a Zariski-dense subgroup, and in the second case does not belong to $\X(K)$ and yet generates a subgroup contained in a proper algebraic subgroup of complexity at most $M$. In both cases we have a contradiction with the definition of $\X$ in Lemma \ref{dense}. For more details on similar ultraproduct arguments, we refer the reader to the appendix of \cite{breuillard-green-tao-linear}.
\end{proof}

Now comes the point where Nori's theorem is used in the form of Corollary \ref{suffdense} : when $\G_p$ is simply connected every subgroup of $\G_p(\F_p)$ which is not contained in an algebraic subgroup of bounded complexity must be all of $\G_p(\F_p)$.

We may then complete the proof of Theorem \ref{sappq}. Pick polynomial functions $(P_k)_{k=1,\ldots,k_0}$, $P_k=P_k((a_{ij},b_{ij}))$, in pairs of matrices $(a,b)$ in $\GL_d$,  which generate the radical ideal of polynomial functions vanishing on $\X$ in $\G \times \G$. We may assume that the $P_k$'s have integer coefficients. If $S=\{1,a^{\pm 1}, b^{\pm 1}\} \subset \G(\Q)$ generates a Zariski-dense subgroup of $\G$, then $(a,b) \notin \X$ and there must exist $k$ such that $P_k(a,b) \neq 0$. We may bound the height of $P_k(a,b)$ in terms of the heights of $a$ and $b$ and the heights of the coefficients of $P_k$. Hence

$$H(P_k(a,b)) \leq O(H(a)H(b))^{O(1)} \leq  (2M_S)^{C},$$
for some constant $C$ depending only on $\G$ and not on $k,a,b$, where $M_S=\max\{H(a),H(b)\}$. This means that if $p>(2M_S)^C$, then $P_k(a,b)$ does not vanish modulo $p$. Now  Lemma \ref{dense}, combined with Nori's theorem (in the form of Corollary \ref{suffdense}), tells us that if additionnally $p$ is larger than a constant depending on $\G$ only, then the reduction mod $p$ of the pair $(a,b)$ generates all of $\G_p(\F_p)$ and we are done. This ends the proof of Theorem \ref{sappq}.

\section{The Bourgain-Gamburd expansion machine}\label{bg-sec}

Bourgain and Gamburd, in their groundbreaking paper \cite{bourgain-gamburd}, came up with a new method to establish the expander property for Cayley graphs of finite groups. They applied it to prove Theorem \ref{ssapp} in the special case of subgroups of $SL_2(\Z)$, but their method is very general. We call it the \emph{Bourgain-Gamburd expansion machine}.
In this section we give an overview of this machine, suitable for the proof of Theorem \ref{ssapp} in full generality.

Let $G_0$ be a finite group, and $S_0 = \{s_1,\dots,s_k\}$ be a symmetric generating set for $G_0$. As before we write:
\[ \mu = \mu_{S_0} := \frac{1}{k}(\delta_{s_1} + \dots + \delta_{s_k})\]
for the uniform probability measure on the set $S$, where $\delta_x$ is the Dirac mass at $x$. For us a probability measure on $G_0$ is the same thing as a function on $G_0$ taking non-negative values at each element of $G_0$ and summing to $1$.

We write
\[ \mu^{n} := \mu \ast \dots \ast \mu\]
for the $n$-fold convolution power of $\mu$ with itself, where the convolution $\mu_1 \ast \mu_2$ of two functions $\mu_1, \mu_2 \colon G_0 \to \R^+$ is given by the formula
\begin{equation}\label{convdef}
\mu_1 \ast \mu_2(g) := \sum_{x \in G_0} \mu_1(g x^{-1}) \mu_2(x).
\end{equation}
The function $x  \mapsto \mu^{n}(x)$ is a probability measure describing the distribution of a random walk of length $n$ starting at the identity in $G_0$ and with generators from $S$. In particular, if $A$ is a subset of $G_0$,
\begin{equation}\label{munh}
\mu^{n}(A) = \P_{w \in W_{n,k}}( w(a_1,\ldots,a_k) \in A ),
\end{equation}
where $W_{n,k}$ is the space of all formal words (not necessarily reduced) on $k$ generators of length exactly $n$. We can now state a version of the Bourgain-Gamburd machine, adapted
 from \cite{bggt} and \cite{varju}.

\begin{proposition}[Bourgain-Gamburd machine]\label{machine}
Suppose that $G_0$ is a finite group, that $S_0 \subseteq G_0$ is a symmetric generating subset, and that there are constants $0 < \kappa, \beta < 1$ such that the following properties hold for every quotient $G$ of $G_0$.
\begin{enumerate}
\item \textup{(High multiplicity)}. For every faithful representation $\rho \colon G \to \GL_d(\C)$ of $G$, $\dim \rho \geq |G|^{\beta}$;
\item \textup{(Classification of Approximate Subgroups)}. For every $\eps>0$, there is $\delta=\delta(\eps)$, $0 < \delta < \eps$ with the property that every $|G|^{\delta}$-approximate subgroup  $A$ of $G$, is either of size $|A|\geq |G|^{1-\eps}$ or is contained in at most $[G:H]^{\eps}/|G|^\delta$ left cosets of a subgroup $H\leq G$;
\item \textup{(Non-concentration estimate)}. Let $S$ be the image of $S_0$ in $G$. There is some even number $n \leq  \log |G|$ such that for all subgroups $H \leq G$,
\[ \mu_S^{n}(H) \leq [G:H]^{-\kappa}.\]
\end{enumerate}
Then the first non zero eigenvalue of the Cayley graph $\Cay(G_0,S_0)$ satisfies
$$\lambda_1 \geq \beta \cdot e^{-\frac{C}{\delta}},$$
where $\delta:=\delta(\eps)>0$ with $\eps:=\min\{\beta,\kappa\}/4$ and $C$ is an absolute constant.
\end{proposition}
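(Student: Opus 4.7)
The plan is to establish the spectral gap by proving rapid $\ell^2$-flattening of the convolution powers $\mu := \mu_{S_0}$ and then exploiting the high multiplicity of eigenvalues afforded by (i). More precisely, I aim to show that there exists $n \leq 2^{C/\delta}\log|G_0|$ with $\|\mu^{n}\|_{\ell^2}^2 \leq |G_0|^{-1+\eta}$ for some small $\eta > 0$. Granted this, the argument concludes via the Sarnak--Xue/trace identity
$$
|G_0|\,\|\mu^{n}\|_{\ell^2}^2 \;=\; 1 \;+\; \sum_{\pi \neq \mathbf{1}} (\dim\pi)\,\tr\bigl(\hat\mu(\pi)^{2n}\bigr).
$$
Running the argument on each quotient of $G_0$ separately, one may assume the non-trivial irreducible representation $\pi^*$ realising the second eigenvalue $\alpha^*$ of the averaging operator $\mu$ is faithful, and hence $\dim\pi^* \geq |G_0|^{\beta}$ by (i). Bounding the $\pi^*$-contribution to the trace from below by $(\dim\pi^*)\cdot(\alpha^*)^{2n}$ gives $(\alpha^*)^{2n} \leq |G_0|^{\eta-\beta}$, which with $n = 2^{O(1/\delta)}\log|G_0|$ and $\eta \ll \beta$ yields $\alpha^* \leq 1 - c\beta\,e^{-C/\delta}$, i.e.\ $\lambda_1 \geq \beta\,e^{-C/\delta}$.

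The engine for driving $\|\mu^n\|_{\ell^2}^2$ down to $|G_0|^{-1+\eta}$ is the non-commutative Balog--Szemer\'edi--Gowers lemma of \cite{tao-noncommutative}, invoked here as an $\ell^2$-flattening dichotomy: for any symmetric probability measure $\nu$ on $G_0$, either $\|\nu \ast \nu\|_{\ell^2}^2 \leq |G_0|^{-\eta}\|\nu\|_{\ell^2}^2$ (genuine decay), or one extracts a $|G_0|^{C\eta}$-approximate subgroup $A$ together with a translate $x_0 A$ satisfying $\nu(x_0 A) \geq |G_0|^{-C\eta}$ and $|A| \leq |G_0|^{C\eta}/\|\nu\|_{\ell^2}^2$. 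In the second alternative, hypothesis (ii) triggers a further dichotomy: either $|A| \geq |G_0|^{1-\eps}$, which forces $\|\nu\|_{\ell^2}^2 \leq |G_0|^{-1+\eps+C\eta}$ and we are already in the flat regime; or $A$ lies in at most $[G_0:H]^{\eps}|G_0|^{-\delta}$ left cosets of some proper subgroup $H \leq G_0$. Pigeonholing a single coset $yH$, replacing $H$ by its normal core if necessary, and exploiting symmetry of $\nu$ via a Cauchy--Schwarz / convolution estimate, the latter produces concentration $\mu^{2m}(H) \gtrsim |G_0|^{2\delta - 2C\eta}[G_0:H]^{-2\eps}$, which contradicts the non-concentration estimate (iii) once $\eps$ is chosen as a sufficiently small multiple of $\min\{\beta,\kappa\}$.

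One therefore starts the iteration from $\nu_0 = \mu^{n_0}$ at the base step $n_0 \leq \log|G_0|$ furnished by (iii), and at each round either terminates in the flat regime or doubles the walk length; after $O(1/\delta)$ such rounds the procedure must terminate, yielding the desired $n = 2^{O(1/\delta)}\log|G_0|$. The $e^{-C/\delta}$ factor in the final spectral gap is precisely the exponential-in-$1/\delta$ cost of this doubling schedule. The main obstacle is the parameter bookkeeping: $\eps$ must be fixed first as a small multiple of $\min\{\beta,\kappa\}$, then $\delta = \delta(\eps)$ is supplied by (ii), and finally $\eta \ll \delta$ so that the $|G_0|^{C\eta}$-losses in the flattening dichotomy are comfortably outweighed by the $|G_0|^{\delta}$-gain in (ii) and the $[G_0:H]^{-\kappa}$-estimate in (iii). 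A further technical nuisance is extending (iii), stated only at a single even step $n_0$, to the larger times encountered in the iteration; this is handled by the monotonicity of $m \mapsto \mu^{2m}(H)$ for symmetric $\mu$ (on the pushforward to $G_0/\tilde H$, where $\tilde H$ is the normal core of $H$), which follows from Plancherel on the quotient.
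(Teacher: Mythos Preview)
Your proposal is correct and follows essentially the same route as the paper: the trace/high-multiplicity argument reduces the spectral gap to showing $\mu^{2n}(1)=\|\mu^n\|_2^2 \leq |G_0|^{-1+\beta/2}$ at some $n\leq 2^{O(1/\delta)}\log|G_0|$; this is achieved by iterating the $\ell^2$-flattening lemma, with the ``no decay'' alternative ruled out via (ii) and (iii) exactly as you describe. One minor simplification: the detour through the normal core $\tilde H$ is unnecessary, since monotonicity of $m\mapsto\mu^{2m}(H)$ holds for \emph{any} subgroup $H$ by writing $\mu^{2m}(H)=\|f_{m,H}\|_{\ell^2(G_0/H)}^2$ with $f_{m,H}(gH)=\mu^m(gH)$ on the coset space and noting that the averaging operator is an $\ell^2$-contraction there (no group structure on $G_0/H$ is needed).
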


We will discuss approximate subgroups in the next section. It suffices for now to say that by definition, given a parameter $K\geq 1$, a \emph{$K$-approximate subgroup} of $G_0$ is a finite symmetric set $A$ containing $1$ such that $AA \subset XA$ for some subset $X \subset G_0$ of size at most $K$.



\begin{remark} We already observed that if the Cayley graph $\mathcal{G}(G_0,S_0)$ is an $\eps$-expander, then so are all induced quotient Cayley graphs corresponding to a quotient group $G:=G_0/H$, for any normal subgroup $H \leq G_0$. It is therefore very natural that the Assumptions (i) to (iii) are made on all quotients of $G_0$.
\end{remark}

As mentioned earlier, Assumption (ii), the classification of approximate subgroups of $G_0$, and (iii), the nonconcentration estimate,  really constitute the beefy parts of the proof of the expander property. They will be dealt with in the next sections.  We also remark that (iii) is the only condition of the three that actually involves the set $S$. Finally we stress that the lower bound on $\lambda_1$ obtained here is independent of the size $k$ of $S$.

\bigskip

An interesting feature of (iii) is that, unlike (i) and (ii), it is \emph{necessary} in order to verify the expander property, because the simple random walk on an expander graph will equidistribute in logarithmic time. Indeed we have the following basic lemma (recall the definition of $\eps$-expanders in (\ref{lambdadef}) above).

\begin{lemma}[random walk characterization of expanders]\label{rwchar} Let $G_0$ be a finite group and $S_0$ a symmetric generating subset not contained in a coset of a subgroup of index $2$ of $G_0$.
\begin{itemize}
\item if the Cayley graph $\mathcal{G}(G_0,S_0)$ is an $\eps$-expander, then there is $C=C_\eps>0$ such that for every $n \geq C \log |G_0|$,
\begin{equation}\label{equid}
\max_{x \in G_0} |\mu_{S_0}^n(x) - \frac{1}{|G_0|}| \leq \frac{e^{-n/C}}{|G_0|^{10}}
\end{equation},

\item if $(\ref{equid})$ holds for some $n \leq C\log |G_0|$, and $C>20$, then $\mathcal{G}(G_0,S_0)$ is an $\eps$-expander, with $\eps=\frac{10}{C}$
\end{itemize}
\end{lemma}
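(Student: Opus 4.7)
The plan is to view $T := \mu_{S_0}\ast(\cdot)$ as a self-adjoint operator on $\ell^2(G_0)$ (self-adjoint because $S_0$ is symmetric), with the constant function $\mathbf{1}$ spanning its eigenspace for the eigenvalue $1$, and to denote by $\lambda_* := \|T|_{\ell^2_0}\|$ its operator norm on the orthogonal complement $\ell^2_0(G_0)$. The hypothesis that $S_0$ is not contained in a coset of an index-$2$ subgroup makes the Cayley graph non-bipartite, so $-1$ is not an eigenvalue of $T$, and together with $\lambda_1>\eps$ this gives the operator-norm bound $\lambda_*\leq 1-\eps$ (the reformulation $\|\mu_{S_p}|_{\ell^2_0}\|<1-\eps$ already highlighted in the introduction). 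Both directions of the lemma will then reduce to comparing $\lambda_*$ with the $\ell^\infty$ decay of $\mu_{S_0}^n-\mathbf{1}/|G_0|$.

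For the forward direction, self-adjointness of $T$ together with the orthogonal splitting of $\delta_e$ and $\delta_g$ into their constant and mean-zero parts yields, for every $g\in G_0$,
\[ \mu_{S_0}^n(g) - \tfrac{1}{|G_0|} \;=\; \bigl\langle \delta_g - \tfrac{\mathbf{1}}{|G_0|},\, T^n\bigl(\delta_e - \tfrac{\mathbf{1}}{|G_0|}\bigr)\bigr\rangle, \]
since the constant pieces contribute exactly $1/|G_0|$ and the cross-terms vanish by orthogonality. Both vectors lie in $\ell^2_0$ with $\ell^2$-norm at most $1$, so Cauchy-Schwarz gives $|\mu_{S_0}^n(g)-1/|G_0||\leq \lambda_*^n\leq(1-\eps)^n\leq e^{-\eps n}$. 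Choosing $C_\eps:=20/\eps$, a short computation confirms $e^{-\eps n}\leq e^{-n/C_\eps}/|G_0|^{10}$ whenever $n\geq C_\eps\log|G_0|$, which is exactly (\ref{equid}).

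For the backward direction, summing the $\ell^\infty$ hypothesis over $g$ gives the $\ell^2$ bound $\|\mu_{S_0}^n-\mathbf{1}/|G_0|\|_2^2 \leq |G_0|\cdot(e^{-n/C}/|G_0|^{10})^2 = e^{-2n/C}/|G_0|^{19}$. Expanding $\delta_e-\mathbf{1}/|G_0|$ in an orthonormal eigenbasis of $T|_{\ell^2_0}$, this $\ell^2$ square equals $\sum_{\lambda\neq 1}\|P_{V_\lambda}\delta_e\|_2^2\,\lambda^{2n}$, where the $V_\lambda$ are the eigenspaces. The key identity $\|P_{V_\lambda}\delta_e\|_2^2 = \dim V_\lambda/|G_0|$ follows because $T$ commutes with right-translation, so each $V_\lambda$ is right-invariant and the diagonal of its projection kernel is necessarily constant (with total trace $\dim V_\lambda$). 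All summands are nonnegative (the exponent $2n$ is even), so the sum is at least $\lambda_*^{2n}/|G_0|$. Combining, extracting $(2n)$-th roots, and using $n\leq C\log|G_0|$ to absorb the polynomial factors gives $\lambda_*\leq e^{-10/C}$. The largest eigenvalue of $T$ on $\ell^2_0$ is at most $\lambda_*$, so $\lambda_1(\mathcal{G})\geq 1-\lambda_*\geq 1-e^{-10/C}\geq 10/C$ for $C>20$ (using $1-e^{-x}\geq x-x^2/2$). The bound $\lambda_*<1$ also delivers non-bipartiteness a posteriori at no extra cost.

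The one technical point requiring care is the distinction between the one-sided spectral gap $\lambda_1>\eps$ and the two-sided operator-norm gap $\lambda_*<1-\eps$: they coincide precisely for non-bipartite graphs, and the hypothesis that $S_0$ is not contained in a coset of an index-$2$ subgroup is exactly what allows one to pass between them. The only genuinely group-theoretic (as opposed to generic graph-theoretic) input is the identity $\|P_{V_\lambda}\delta_e\|_2^2=\dim V_\lambda/|G_0|$, which rests on the right-translation invariance of the eigenspaces of $T$.
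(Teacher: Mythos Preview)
Your forward direction has a genuine gap: the claim that non-bipartiteness together with $\lambda_1>\eps$ yields the two-sided bound $\lambda_*\leq 1-\eps$ is false. Non-bipartiteness only tells you that $-1$ is not an eigenvalue of $T$; it gives no \emph{quantitative} lower bound on the bottom of the spectrum, so the smallest eigenvalue could be $-1+o(1)$ as $|G_0|\to\infty$, making $\lambda_*$ arbitrarily close to $1$ regardless of $\lambda_1$. Your closing assertion that the one-sided and two-sided gaps ``coincide precisely for non-bipartite graphs'' is simply incorrect. The paper does not attempt to prove this step directly: it invokes an external result (\cite[Prop.~E.1]{bggt}) to obtain $\|T_\mu|_{\ell^2_0}\|\leq e^{-c_\eps}$ with $c_\eps$ depending on both $\eps$ \emph{and} $|S_0|$, and the resulting $C_\eps$ inherits this dependence on $|S_0|$, a point hidden in the lemma's statement. (The line $\|\mu_{S_p}|_{\ell^2_0}\|<1-\eps$ in the introduction that you cite is itself informal on exactly this issue.)

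Your backward direction is correct but takes a more elaborate route than the paper. The paper simply picks an eigenfunction $f\in\ell^2_0$ for the eigenvalue $1-\lambda_1$ of $T$ and observes that the $\ell^\infty$ bound on the kernel $\mu^n(\cdot)-1/|G_0|$ forces $\|(T^n-P)f\|_2\ll|G_0|^{-10}\|f\|_2$, hence $(1-\lambda_1)^n\leq|G_0|^{-10}$ up to harmless polynomial factors, giving $1-\lambda_1\leq e^{-10/C}$ directly. Your approach via the identity $\|P_{V_\lambda}\delta_e\|_2^2=\dim V_\lambda/|G_0|$ is a nice use of the group structure and actually yields the stronger conclusion $\lambda_*\leq e^{-10/C}$ (controlling the bottom of the spectrum too), but this extra strength is not needed here. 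Both arguments share the same harmless numerical slip at the end: $1-e^{-x}<x$ for $x>0$, so the literal claim $\eps=10/C$ is off by a bounded factor in either proof.
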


\begin{proof}Let $T_\mu=1-\Delta$ be the operator $f \mapsto \mu * f$ on $\ell^2(G_0)$. To prove the second item, pick an eigenfunction $f$ of the Laplacian with eigenvalue $\lambda_1$ and note that $||(T_\mu^n - \frac{1}{|G_0|}Id)f||_2 \leq \frac{1}{|G|^{10}}||f||_2$ forcing $(1-\lambda_1)\leq |G_0|^{-10/n}$. As for the first item, note that the left hand side of $(\ref{equid})$ is bounded by $||T_\mu||^n \leq ||T_\mu||^{C_\eps \log |G_0|} = 1/|G_0|^{-C_\eps \log (1/||T_\mu||)}$. The assumption on $S_0$ and the fact that $\mathcal{G}$ is the Cayley graph of a group ensure that it is not bi-partite and that $||T_\mu||\leq e^{-c_\eps}$, for some $c_\eps>0$ depending only on $\eps$ and $|S_0|$ (see \cite[Prop. E.1]{bggt}). The result follows with $C_\eps=10/c_\eps$.
\end{proof}

To see that $(iii)$ is necessary, simply note that $\mu^{nm}(H) \geq (\mu^n(H))^m$ and apply the first item in the above lemma to evaluate $\mu^{nm}(H)$ using some $m$ between $C_\eps$ and $2C_\eps$ say.

\begin{remark}\label{nosmallindex} According to result of Landazuri-Seitz \cite{landazuri-seitz}, Assumption (i) is always verified when $G_0$ is a simple or quasi-simple group of Lie type of bounded rank, with the parameter $\beta>0$ depending only on the rank. See Prop. \ref{quasi} below. Looking at the action by translation on $\ell^2(G_0/H)$, where $H$ is an arbitrary subgroup of $G_0$, this implies that every proper subgroup of $G_0$ has index at least $|G_0|^c$ for some $c>0$ depending only on the rank of $G_0$.
\end{remark}

We now pass to the proof of Proposition \ref{machine}. The following basic observation relates the eigenvalues of the Laplace operator $\Delta$ on the Cayley graph, with the probability of return to the identity of the simple random walk. Let $1=\alpha_0 > \alpha_1 \geq \ldots \geq  \alpha_{|G_0|-1}$ be the eigenvalues of the convolution operator
$$T_\mu: f \mapsto \mu * f$$ on $\ell^2(G_0)$. Since $T_\mu=T_{\mu_{S_0}}=Id - \Delta$, the first non trivial eigenvalue of $\Delta$, is just $\lambda_1= 1-\alpha_1$.

Now observe that the eigenspace of $T_\mu$ corresponding to the eigenvalue $\alpha_1$ is invariant under $G_0$ and thus forms a linear representation of $G_0$. Up to replacing $G_0$ with its image modulo of the kernel of this representation, and $\mu$ with the corresponding push-forward measure, we may assume that $G_0$ acts faithfully on this eigenspace. And hence, applying Assumption (i), that the dimension of this eigenspace is at least $|G_0|^\beta$.

Thus we seek a lower bound on $1-\alpha_1$. For this, we write the following naive \emph{trace formula}, which consists in expressing the trace of $T_{\mu^n}=T_\mu^n$ in two ways (this key idea is analogous to what is done in the context of discrete groups in Sarnak-Xue \cite{sarnak-xue} and Gamburd \cite{gamburd-thesis}). Firstly:

$$tr(T_{\mu^n}) = \sum_{x \in G_0} \langle (T_\mu)^n \delta_x, \delta_x \rangle = |G_0| \langle (T_\mu)^n \delta_1, \delta_1 \rangle = |G_0| \mu^n(1),$$
where $\mu^n(1)$ is the value at the identity of the probability measure $\mu^n$. Here $\delta_x$ denotes the Dirac mass at $x$ and $\langle \cdot, \cdot, \rangle$ the $\ell^2$ scalar product on $G_0$. And secondly:

$$tr(T_{\mu^n}) = \alpha_0^n + \alpha_1^n + \ldots + \alpha_{|G_0|-1}^n.$$
We will now play the multiplicity lower bound on $\alpha_1$ against the combinatorial upper bound on $\mu^n(1)$. Since $\alpha_1^n$ appears at least $|G_0|^\beta$ times in the above sum,  discarding all other eigenvalues (note that $n$ is even and hence $\alpha_i^n \geq 0$), we get the following:

\bigskip

\noindent {\bf Observation 1.} If $\mu^n(1) \leq \frac{1}{|G_0|^{1-\beta/2}}$ for some even integer $n \leq C_1 \log |G_0|$, then the first non trivial eigenvalue $\alpha_1$ of $T_\mu$ satisfies
$$\alpha_1 \leq e^{-\frac{\beta}{2C_1}}.$$

Assumption (iii) only guarantees the existence of an even integer $n_0 \leq  \log |G_0|$ such that $\mu^{n_0}(1) \leq \frac{1}{|G_0|^{\kappa}}$ for some positive $\kappa$ which may be smaller than $1-\beta/2$. So in order to conclude, we need to show that $\mu^n(1)$ will decay from $1/|G_0|^{\kappa}$ at time $n=n_0 \leq  \log |G_0|$ to $1/|G_0|^{1-\beta/2}$ at a not much larger time $n=n_1 \leq C_1 \log |G_0|$ for some constant $C_1$ depending only on the constants at hand and not on the size of $G_0$.

Before going further, let us record the following simple remarks:

\begin{remark} When $n$ tends to infinity $\mu^n(1)$ converges to $1/|G_0|$, the uniform distribution on $G_0$.
\end{remark}

\begin{remark} Since $\mu$ is assumed symmetric,

\begin{equation}\label{dec}
\mu^{2n}(1) = \sum_{x \in G_0} \mu^n(x) \mu^n(x^{-1}) = ||\mu^n||_2^2
\end{equation}
\end{remark}
\begin{remark}\label{decH} For every subgroup $H \leq G_0$, the sequence $\mu^{2n}(H)$ is non-increasing: indeed $\mu^{2n}(H)=||f_{n,H}||_2^2$, where $f_{n,H}: G_0/H \to \R$, $gH \mapsto \mu^n(gH)$, and $f_{n+1,H}=T_\mu f_{n,H}$, while $T_\mu$ is a contraction in $\ell^2$.

\end{remark}

The key ingredient in proving this final decay of $\mu^n(1)$ from $1/|G_0|^{\kappa}$ to $1/|G_0|^{1-\beta/2}$ is the following $\ell^2$-flattening lemma, due to Bourgain-Gamburd. It says in substance that the only reason why the convolution of a probability measure with itself  would not decay in $\ell^2$-norm is because it gave a lot of mass to (a coset of) an approximate subgroup.

\begin{lemma}($\ell^2$-flattening lemma) \label{flattening}. There is absolute constant $R>0$ such that the following holds. Let $K \geq 2$ and  $\nu : G_0 \rightarrow \R^+$ be a probability measure on a finite group $G_0$ which satisfies
$$\Vert \nu \ast \nu \Vert_{2} \geq \frac{1}{K}\Vert \nu \Vert_{2},$$
where convolution is defined in \eqref{convdef}. Then there is a $K^R$-approximate subgroup $A$ of $G_0$ with
$$ K^{-R} \frac{1}{\Vert \nu \Vert_{2}^2} \leq |A| \leq K^{R} \frac{1}{\Vert \nu \Vert_{2}^2}$$
and such that for each $x \in A$,
$$\nu * \nu^{-1}(x) \geq \frac{1}{K^{R}|A|}.$$
\end{lemma}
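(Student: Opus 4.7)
The plan is to reinterpret the hypothesis as a lower bound on the multiplicative energy of $\nu$, extract from it a finite set of small doubling via Tao's noncommutative Balog--Szemer\'edi--Gowers (BSG) lemma, and finally upgrade that set to an approximate subgroup via the structure theorem for sets of small tripling.

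Expanding squares yields
\[
 \|\nu\ast\nu\|_{2}^{2} \;=\; \sum_{x_{1}x_{2}=x_{3}x_{4}} \nu(x_{1})\nu(x_{2})\nu(x_{3})\nu(x_{4}),
\]
so the hypothesis furnishes a lower bound $\|\nu\|_{2}^{2}/K^{2}$ on this multiplicative energy $E(\nu)$. First I would dyadically decompose $\nu$: after discarding the contribution of $x$ with $\nu(x)<\|\nu\|_{2}^{2}/2$ (which accounts for at most half of $\|\nu\|_{2}^{2}$), the surviving mass is supported on $O(\log(1/\|\nu\|_{2}))$ level sets $A_{i}=\{x:\nu(x)\in(2^{-i-1},2^{-i}]\}$ on which $\nu$ is comparable to a uniform measure. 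Writing $\nu=\sum_{i}\nu_{i}$ with $\nu_{i}:=\nu\cdot \mathbf{1}_{A_{i}}$ and pigeonholing the inequality $\|\nu\ast\nu\|_{2}\leq\sum_{i,j}\|\nu_{i}\ast\nu_{j}\|_{2}$ over the $O(\log^{2}(1/\|\nu\|_{2}))$ pairs $(i,j)$ locates indices for which $E(A_{i},A_{j})\gtrsim |A_{i}|^{2}|A_{j}|^{2}/K^{O(1)}$, the logarithmic losses being folded into the exponent on $K$.

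Next I would invoke Tao's noncommutative BSG lemma \cite{tao-noncommutative}: from two sets of large multiplicative energy one extracts large subsets $A'\subseteq A_{i}$, $B'\subseteq A_{j}$ with $|A'\cdot B'|\leq K^{O(1)}\sqrt{|A_{i}||A_{j}|}$. The noncommutative Ruzsa--Pl\"unnecke calculus then produces a symmetric set $S\ni 1$ of size $|S|\asymp 1/\|\nu\|_{2}^{2}$ (up to $K^{O(1)}$) and with small tripling $|S^{3}|\leq K^{O(1)}|S|$. Tao's small-tripling theorem finally shows that $S$ is controlled by a $K^{O(1)}$-approximate subgroup $A$ of comparable size, which is the desired approximate group. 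The pointwise bound $\nu\ast\nu^{-1}(x)\geq 1/(K^{R}|A|)$ on $A$ comes out of the construction: $A$ sits inside a level set of the autocorrelation $\nu\ast\widetilde{\nu}=\nu\ast\nu^{-1}$ of height at least $1/(K^{O(1)}|A|)$, as follows from the identity $\sum_{x}\nu\ast\widetilde{\nu}(x)=1$ and the fact that $\nu\ast\widetilde{\nu}$ is essentially supported on $A_{i}\cdot A_{i}^{-1}$, whose cardinality is at most $K^{O(1)}|A_{i}|$.

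The main technical obstacle is the noncommutative BSG lemma itself: unlike its abelian counterpart, it admits no Fourier shortcut and must be proved by a direct combinatorial argument, a Ramsey-type count of paths in an auxiliary bipartite graph built from large-energy pairs. Once BSG is available, the remaining steps (Ruzsa calculus and the passage from small tripling to an approximate subgroup) are routine, and the $\log(1/\|\nu\|_{2})$-factors accumulated from the dyadic pigeonhole are absorbed into $K^{R}$ simply by enlarging the absolute constant $R$.
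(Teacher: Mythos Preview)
Your approach---dyadic decomposition into level sets, pigeonhole to a pair $(A_i,A_j)$ with large multiplicative energy, then Tao's noncommutative Balog--Szemer\'edi--Gowers lemma followed by the small-tripling/approximate-group upgrade---is exactly the route the paper sketches (and attributes in detail to Varj\'u \cite[Lemma~15]{varju} and \cite[Appendix~A]{bggt}).

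One point, however, is stated incorrectly. You write that ``the $\log(1/\|\nu\|_{2})$-factors accumulated from the dyadic pigeonhole are absorbed into $K^{R}$ simply by enlarging the absolute constant $R$.'' This is false as written: $\log(1/\|\nu\|_{2})$ is not bounded by any fixed power of $K$ (take $\nu$ uniform on a large subgroup and $K=2$). A naive pigeonhole over the $N=O(\log(1/\|\nu\|_2))$ scales loses a factor $N^{2}$ in the energy bound, and this would feed into the approximation constant of the resulting approximate group, violating the requirement that $R$ be absolute. The fix is not to enlarge $R$ but to do the pigeonhole with weights: one seeks a pair $(i,j)$ for which $\|\nu_i\ast\nu_j\|_2$ is large \emph{relative to} $\|\nu_i\|_1$, $\|\nu_j\|_1$, $\|\nu_i\|_2$, $\|\nu_j\|_2$ (using Young's inequality $\|\nu_i\ast\nu_j\|_2\le(\|\nu_i\|_1\|\nu_i\|_2\|\nu_j\|_1\|\nu_j\|_2)^{1/2}$ and the constraints $\sum_i\|\nu_i\|_1\le 1$, $\sum_i\|\nu_i\|_2^2\le\|\nu\|_2^2$), and this delivers the BSG hypothesis on $(A_i,A_j)$ with a parameter polynomial in $K$ alone. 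The paper does not spell this out either, but it is the content of the references it cites. Your justification of the final pointwise bound $\nu\ast\nu^{-1}(x)\ge 1/(K^R|A|)$ is also somewhat hand-wavy; it comes directly out of the BSG output (the approximate group $A$ is built inside $A_i'\,(A_i')^{-1}$ for a dense subset $A_i'\subset A_i$ on which $\nu$ is $\approx 2^i\|\nu\|_2^2$, so $\nu\ast\nu^{-1}$ is pointwise $\gtrsim 2^{2i}\|\nu\|_2^4|A_i|\approx 1/|A|$ on $A$), rather than from the global identity $\sum_x\nu\ast\nu^{-1}(x)=1$.
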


Here $||\nu||_2$ denotes the $\ell^2$ norm on $G_0$, i.e. $||\nu||_2^2:=\sum_{x \in G_0} \nu(x)^2$, and $\nu^{-1}$ denotes the symmetric of $\nu$, namely the probability measure $\nu^{-1}(x):=\nu(x^{-1})$. Observe that the last condition implies immediately that there is $g \in G_0$ such that $\nu(Ag) \geq 1/K^R$.

 \bigskip

\begin{proof} The proof of the $\ell^2$-flattening lemma is really the core of the Bourgain-Gamburd machine. It is derived from a powerful combinatorial tool, the Balog-Szemer\'edi-Gowers lemma (see Lemma \ref{balog} below), due in this context to Tao (\cite{tao-noncommutative}, \cite[\S 2.5, 2.7]{tao-vu}), but which originates from the work of Balog-Szemer\'edi \cite{balog-szemeredi} and from Szemer\'edi's celebrated regularity lemma for large graphs. A simple derivation of the above $\ell^2$-flattening lemma, based on Tao's version of the Balog-Szemer\'edi-Gowers lemma, namely Lemma \ref{balog} below, is given by Varj\'u in \cite[Lemma 15]{varju} and we refer the reader to it for the details. He can also consult \cite[Appendix A]{bggt}. The basic idea is to decompose $\nu$ into approximate level sets $\nu = \sum_i 1_{A_i}\nu$, where $A_i=\{x \in G_0; 2^{i-1}||\nu||_2^2 < \nu(x) \leq 2^i||\nu||_2^2\}$ and show that for some suitable pair $A_{i_1},A_{i_2}$ the number of collisions $||1_{A_{i_1}} * 1_{A_{i_2}}||^2_2$ is large enough to be able to apply Lemma \ref{balog}.
 \end{proof}

Applying this lemma to a symmetric measure $\nu$ with $K=|G_0|^{\delta/R}$, we obtain the following direct consequence:

\begin{corollary}\label{corBSG} Let $0< \delta,\eps \leq \frac{1}{4}$ and let $\nu$ be a symmetric probability measure on a finite group $G_0$ such that $|G_0|^{2\eps} \leq 1/||\nu||_2^2 \leq |G_0|^{1-2\eps}$. Then
$$||\nu * \nu||_2 \leq \frac{1}{|G_0|^{\delta/R}}||\nu||_2,$$ unless there is a $|G_0|^{\delta}$-approximate subgroup $A$ of $G_0$ with $|G_0|^\eps \leq |A| \leq |G_0|^{1-\eps}$ such that $\nu(gA) \geq 1/|G_0|^{\delta}$ for some $g \in G_0$.
\end{corollary}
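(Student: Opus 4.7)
The natural plan is to argue by contradiction: assume both conclusions fail, i.e.\ $\|\nu * \nu\|_2 > |G_0|^{-\delta/R}\|\nu\|_2$, and feed this into Lemma \ref{flattening} with $K := |G_0|^{\delta/R}$ (which is $\geq 2$ provided $|G_0|$ is large enough; the small-$|G_0|$ case is trivial since one may shrink $\delta$). The lemma then produces a $K^R = |G_0|^{\delta}$-approximate subgroup $A$ of $G_0$ satisfying
\[ |G_0|^{-\delta}\cdot\frac{1}{\|\nu\|_2^2}\ \leq\ |A|\ \leq\ |G_0|^{\delta}\cdot\frac{1}{\|\nu\|_2^2}, \]
together with the pointwise estimate $\nu * \nu^{-1}(x) \geq 1/(|G_0|^{\delta}|A|)$ for every $x \in A$.

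Next I would convert the size bounds on $|A|$ into the required form using the two-sided hypothesis $|G_0|^{2\eps}\leq 1/\|\nu\|_2^2 \leq |G_0|^{1-2\eps}$. This gives $|A|\geq |G_0|^{2\eps - \delta}$ and $|A|\leq |G_0|^{1 - 2\eps + \delta}$; both fall in the desired window $[|G_0|^{\eps},|G_0|^{1-\eps}]$ precisely when $\delta\leq \eps$, which I would note is the regime in which the corollary is being applied (in the Bourgain--Gamburd machine, $\delta$ is chosen as a function of $\eps$ with $\delta<\eps$, compare the statement of item (ii) in Proposition \ref{machine}).

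It remains to turn the pointwise bound into a single coset mass bound. Because $\nu$ is symmetric, $\nu^{-1}=\nu$, so $\nu*\nu^{-1}=\nu*\nu$. Summing the pointwise bound over $x \in A$ yields $\nu*\nu(A) \geq 1/|G_0|^{\delta}$. Writing out the convolution and swapping the order of summation,
\[ \nu*\nu(A)\ =\ \sum_{x \in A}\sum_{g \in G_0}\nu(xg^{-1})\nu(g)\ =\ \sum_{g \in G_0}\nu(g)\,\nu(Ag^{-1}), \]
so the average of $\nu(Ag^{-1})$ against $\nu$ is at least $|G_0|^{-\delta}$, and some $g$ realizes this lower bound individually. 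Finally, since $A$ is symmetric (being an approximate subgroup) and $\nu$ is symmetric, $\nu(Ag^{-1}) = \nu((gA)^{-1}) = \nu(gA)$, which gives the claimed coset. The only slightly delicate step is the bookkeeping in the third paragraph (matching $\nu * \nu^{-1}$ with $\nu * \nu$ and the left/right coset swap), but it is routine once symmetry is invoked; the substance of the corollary lies entirely in Lemma \ref{flattening}.
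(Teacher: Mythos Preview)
Your proof is correct and follows exactly the approach the paper intends: the corollary is stated as a ``direct consequence'' of Lemma~\ref{flattening} applied with $K=|G_0|^{\delta/R}$, and you have filled in precisely the details the paper leaves implicit (the size bounds on $|A|$ via the hypothesis on $\|\nu\|_2$, and the averaging/symmetry argument yielding the coset $gA$, which the paper compresses into the one-line remark following Lemma~\ref{flattening}). Your observation that the size estimate requires $\delta\leq\eps$ is accurate and matches how the corollary is actually used (Assumption~(ii) of Proposition~\ref{machine} stipulates $\delta<\eps$); the paper is slightly loose in not stating this constraint explicitly in the corollary.
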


Here $R$ is the absolute constant from Lemma \ref{flattening}. We are going to apply this corollary several times to the convolution powers $\mu^n$ with even $n$ between $ \log|G_0|$ and $C_1 \log |G_0|$. After only a bounded number of applications of the corollary, $\mu^n(1)$ will be at least as small as $1/|G_0|^{1-\beta/2}$ and we will be done by Observation 1 above.

So we set $\eps=\frac{1}{4}\min\{\beta,\kappa\}$, where $0< \beta\leq 1$ is the exponent of quasirandomness given by Assumption (i) from Proposition \ref{machine} and $\kappa>0$ is given by Assumption (iii).  Let $\delta=\delta(\eps)$ be given by Assumption (ii) of Proposition \ref{machine} (the Classification of Approximate Subgroups).

We will now apply the above corollary to any $\nu$ of the form $\nu=\mu^n$ for some even $n \geq \log |G_0|$. Assume that $||\nu||_2^2 \geq 1/|G_0|^{1-\beta/2}$. Then $1/||\nu||_2^2 \leq |G_0|^{1-2\eps}$, and if $||\nu||_2^2 \leq 1/|G_0|^{2\eps}$, we may apply Corollary \ref{corBSG}, which gives

\begin{equation}\label{decayy}||\nu * \nu||_2 \leq \frac{||\nu||_2 }{|G_0|^{\delta/R}},\end{equation}
unless there is a $|G_0|^{\delta}$-approximate group $A$ in $G_0$ with $|A|\leq |G_0|^{1-\eps}$ such that $\nu(gA) \geq 1/|G_0|^{\delta}$ for some $g \in G_0$. By Assumption (ii) of Proposition \ref{machine}, $A$ must be contained in at most $[G:H]^{\eps}/|G|^\delta$ left cosets of a proper subgroup $H$. Hence at least one coset $xH$ of $H$ charges $\nu$ a lot, i.e. $\nu(xH) \geq 1/[G_0:H]^{\eps}$. However $\nu^2(H) \geq \nu(xH)^2$ since $\nu$ is symmetric, and hence,
 \begin{equation}\label{nubound}
\nu^2(H) \geq 1/[G_0:H]^{2\eps}.
\end{equation}
Since $n \mapsto \mu^{2n}(H)$ is non-increasing (see Remark \ref{decH} above), Assumption (iii) of Proposition \ref{machine} implies that $\nu^2(H) \leq 1/[G_0:H]^\kappa$. However $\kappa>2\eps$, so this clearly contradicts $(\ref{nubound})$.

Therefore  $(\ref{decayy})$ always holds as long as $1/|G_0|^{2\eps} \leq ||\nu||_2^2 \leq 1/|G_0|^{1-\beta/2}$. As a consequence, we need to apply $(\ref{decayy})$ at most a bounded number of times starting from $\nu=\mu^{2n_0}$ with $n_0=[\log |G_0|]$ say to reach the desired upper bound. Note that the bound $1/|G_0|^{2\eps} \geq ||\mu^{2n_0}||_2^2$ holds thanks to Remark \ref{decH}, $(\ref{dec})$ and Assumption (iii) applied to $H=\{1\}$, because $\kappa>2\eps$. Now apply successively $T$ times Corollary $\ref{corBSG}$ to get:

$$||(\mu^{2n_0})^{2^T}||_2\leq  \frac{||\mu^{2n_0}||_2}{|G_0|^{T\delta/R}} \leq \frac{1}{|G_0|^{T\delta/R}} \leq \frac{1}{|G_0|^{1-\beta/2}},$$
provided $T\delta/R \geq 1-\beta/2.$

This yields a constant $C_1$ such that $\mu^{2m}(1) \leq 1/|G_0|^{1-\beta/2}$ for some $m \geq C_1 \log |G_0|$, where an upper bound for $C_1$ is
$$C_1 \leq   2^{\frac{1}{\delta} R(1-\beta/2)}.$$

Together with Observation 1, this finishes the proof of Proposition \ref{machine} with a rather explicit spectral gap, $\alpha_1 \leq e^{-\beta/2C_1}$. Working out the above expression yields the following dependence of the gap in terms of the parameters involved:

$$\lambda_1 \geq \beta \cdot e^{-\frac{C}{\delta}},$$
for some absolute constant $C>0$. Recall that $\delta:=\delta(\eps)$ is the function given in Assumption (ii) with $\eps:=\frac{1}{4}\min\{\beta,\kappa\}$.

\section{Approximate subgroups of linear groups}\label{approxsec}

In this section, we give a very brief introduction to approximate subgroups. The first paragraph gives a definition and some general facts, including the relation with small tripling and the Balog-Szemer\'edi-Gowers lemma. Those are needed only to understand the proof of the $\ell^2$-flattening lemma, Lemma \ref{flattening}, stated in the last section.

Next we describe the classification of approximate subgroups of simple algebraic groups required to deal with Assumption (ii) of the Bourgain-Gamburd machine (Prop. \ref{machine} above) and prove Theorem \ref{class} below, a structure theorem (\cite{breuillard-green-tao-linear,pyber-szabo}) for approximate subgroups of linear groups. Its proof is purely algebro-geometric and requires nothing on approximate subgroups besides the definition. For further introductory material on approximate groups see \cite{tao-noncommutative, breuillard-green-tao-survey, breuillard-msri}.

\subsection{General facts about approximate groups} The notion of an approximate subgroup of an ambient group $G$ was introduced by Terry Tao in \cite{tao-noncommutative} in connection with the work of Bourgain-Gamburd \cite{bourgain-gamburd} and the Balog-Szemer\'edi-Gowers theorem alluded to above in the proof of the $\ell^2$-flattening lemma (Lemma \ref{flattening}). Here is a definition:

\begin{definition}(Approximate subgroup) A (finite) subset $A$ of a group $G$ is said to be a $K$-approximate subgroup of $G$ (here $K \geq 1$ is a parameter) if $A$ is symmetric (i.e. $a \in A \Rightarrow a^{-1} \in A$), contains the identity, and if there is a symmetric subset $X \subset G$ of size $|X|\leq K$ such that
$$AA \subset XA.$$
\end{definition}

Although the definition makes sense without the assumption that $A$ is finite, we will always put this assumption throughout these notes whenever we speak of an approximate subgroup.

Note that $AA=(AA)^{-1} \subset AX$, so we always have $AA \subset XA \cap AX$. Clearly if $K=1$ this notion coincides with the requirement that $A$ be a finite subgroup of $G$.

Although Tao was the first to define approximate subgroups in a non-commutative context, their study in $(\Z,+)$, or $(\R,+)$, is an old subject, part of \emph{additive combinatorics} (see \cite{nathanson}, \cite{tao-vu} for modern expositions), culminating with the so-called Freiman-Ruzsa theorem (\cite{freiman}, \cite{ruzsa}),  which gives a structure theorem for approximate subgroups of $\Z$, or more generally (Green-Ruzsa \cite{green-ruzsa}) abelian groups:

\begin{theorem}(Freiman-Ruzsa, Green-Ruzsa)\label{freiman-ru} Let $G$ be an abelian group and $A \subset G$ be a $K$-approximate subgroup of $G$. Then there is a finite subgroup $H \leq G$ and a centered multidimensional progression $P \subset G$ of dimension at most $d(K)$ such that $A$ is contained in at most $C(K)$ translates of the subset $HP$ and $|HP| \leq C(K)|A|$. The constants $d(K)$ and $C(K)$ depend only on $K$ and not on $G$ nor $A$.
\end{theorem}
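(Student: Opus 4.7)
The plan is to follow the classical Ruzsa--Green--Ruzsa four-step strategy; I switch to additive notation since $G$ is abelian. First, the hypothesis $AA \subset XA$ with $|X| \leq K$ gives $|A+A| \leq K|A|$, and the Pl\"unnecke--Ruzsa inequalities then yield $|nA - mA| \leq K^{O(n+m)}|A|$ for all $n, m \geq 0$. This controls every iterated sumset of $A$ and is the universal starting point for any Freiman-type structure theorem.

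Second, I would apply Ruzsa's modeling lemma in its torsion-aware form, due to Green and Ruzsa, to produce a subset $A' \subset A$ with $|A'| \geq |A|/C_1(K)$ admitting a Freiman $8$-isomorphism $\phi : A' \to B$ onto a subset $B$ of some finite abelian ambient group $\widetilde{G}$ with $|\widetilde{G}| \leq C_2(K)|A|$. Inside $\widetilde{G}$, the image $B$ has density at least $1/C_3(K)$, which sets the stage for applying Fourier analysis on a finite group.

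Third, I would apply Bogolyubov's Fourier-analytic argument on $\widetilde{G}$: the iterated sumset $2B - 2B$ contains a Bohr set determined by a set $\Gamma$ of at most $C_3(K)^2$ characters of $\widetilde{G}$ together with a fixed positive radius $\rho$. Since $\widetilde{G}$ may carry a large torsion part, this is a \emph{coset} Bohr set, and Minkowski's second theorem applied to the associated lattice modulo the torsion subgroup extracts a coset progression $H_0 + P_0$ of dimension at most $d(K)$ that sits inside $2B - 2B$. Pulling back through the Freiman $8$-isomorphism (the order $8$ is precisely what is needed to transport the sumset identities that define a coset progression inside $2B - 2B$) produces a coset progression $H + P \subset 2A - 2A$ with $H$ finite and $P$ of bounded dimension.

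Finally, the Pl\"unnecke estimates of the first step give $|A + (H+P)| \leq K^{O(1)}|A|$, so Ruzsa's covering lemma covers $A$ by $C(K)$ translates of $(H+P) - (H+P)$, itself a coset progression of the desired form, completing the proof. The principal obstacle, absent from the original Freiman case $G = \Z$, is that the ambient group may contain a large finite subgroup: inside the Bohr set produced by Bogolyubov one cannot directly extract a genuine multidimensional progression, because the relevant lattice in $\widetilde{G}$ may be degenerate in the torsion directions. This is exactly what forces the coset formulation $HP$ in the conclusion and what requires every geometry-of-numbers argument to be performed modulo the torsion subgroup, making the bookkeeping between Stages 2 and 3 the delicate part of the proof.
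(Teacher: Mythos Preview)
Your sketch is correct and follows exactly the standard Green--Ruzsa strategy (Pl\"unnecke--Ruzsa, modeling lemma, Bogolyubov--Chang plus geometry of numbers, then Ruzsa covering). The paper does not give its own proof of this theorem but simply refers the reader to Tao--Vu and to the original Green--Ruzsa article, so there is nothing to compare against beyond noting that you have outlined precisely the argument in those references.
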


By definition a centered \emph{multidimensional progression} of dimension at most $d$ is a subset $P \leq G$ of the form $\pi(B)$, where $\pi: \Z^d \to G$ is a group homomorphism and $B$ is a box in $\Z^d$, namely a subset of the form $\prod_{i=1}^d [-N_i,N_i]$, where the $N_i$'s are non-negative integers. It is easy to see that $B$ is a $2^d$-approximate subgroup, indeed $BB$ is the box with sides $[-2N_i,2N_i]$ and thus can be covered by the translates of $B$ centered at each of the $2^d$ corners of the box $B$. Passing to the quotient via $\pi$, we get that $P$ too is a $2^d$-approximate subgroup, and finally that for every finite subgroup $H \leq G$, the so-called \emph{coset-progression} $HP$ is also a $2^d$-approximate subgroup.

For the proof of this theorem, we refer the reader to the book by Tao and Vu \cite{tao-vu} as well as the article \cite{green-ruzsa} and the original references therein.

\bigskip

Two remarks are in order:

 \begin{itemize}
 \item The bounds $d(K)$ and $C(K)$ can be made quantitative, and good estimates on them are useful for applications as we will see below. Conjecturally (Freiman-Ruzsa conjecture), $d(K)=O(\log K)$ while $C(K)=O(K^{O(1)})$. See Sanders \cite{sanders-survey} for the best currently available bounds.
     \item The conclusion is quite special to abelian groups. A very general structure theorem was recently obtained in \cite{breuillard-green-tao-structure} valid for approximate subgroups of arbitrary groups, but it yields no explicit bounds on $C(K)$. As we will see below, when $G$ is a finite simple group of bounded rank, then a polynomial bound can be given on $C(K)$ provided $A$ generates $G$. Obtaining here a polynomial bound is crucial for the applications to the Bourgain-Gamburd expansion machine, i.e. to Assumption (ii) of Prop. \ref{machine}.
\end{itemize}

As follows immediately from their definition, approximate subgroups do not grow much under self multiplication, namely the product set $A^k:=A \cdot \ldots \cdot A$ of $A$ with itself $k$ times has size at most $|X|^{k-1}|A|$. An important observation (due to Tao using related ideas of Ruzsa) is that we have the following converse:

\begin{proposition}(Small tripling)\label{tripling} Let $A$ be a finite subset of a group $G$ such that $|AAA| \leq K|A|$ for some parameter $K \geq 1$. Then $B:=(A \cup A^{-1} \cup \{1\})^2$ is a $c(K)$-approximate subgroup of size $|B|\leq c(K)|A|$, where $c(K)=O(K^{O(1)})$ and the implied constants are absolute. In particular $|A^n| \leq O(K^{O(n)})|A|$.
\end{proposition}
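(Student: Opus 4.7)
The plan is to follow Tao's noncommutative Pl\"unnecke--Ruzsa machinery. Set $C := A \cup A^{-1} \cup \{1\}$, so that $B = C^2$ is symmetric, contains the identity, and satisfies $|C| \le 2|A| + 1$. The central tool I would establish first is Ruzsa's triangle inequality: for any finite subsets $U, V, W \subset G$,
$$|V| \cdot |U W^{-1}| \le |U V^{-1}| \cdot |V W^{-1}|,$$
proved by an explicit injection. For each $x \in UW^{-1}$ fix a representation $x = u(x) w(x)^{-1}$ with $u(x) \in U$ and $w(x) \in W$; then $(v, x) \mapsto (u(x) v^{-1}, v w(x)^{-1})$ maps $V \times UW^{-1}$ injectively into $UV^{-1} \times VW^{-1}$, since the product of the two coordinates recovers $x$ (and hence $u(x), w(x)$), after which $v$ is determined.

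Next I would use the tripling hypothesis $|AAA| \le K|A|$ together with iterated applications of the triangle inequality to control every mixed product $|A^{\varepsilon_1} \cdots A^{\varepsilon_k}|$. The first step is to pass from small tripling to small doubling: judicious choices of $U, V, W$ among $A$ and $A^{-1}$ yield $|AA|, |AA^{-1}|, |A^{-1}A| \le K^{O(1)} |A|$. An induction on $n$, bounding $|C^{n+1}|$ in terms of products $|C^i (C^j)^{-1}|$ that have already been controlled, then gives the Pl\"unnecke-type estimate
$$|C^n| \le K^{O(n)} |A|$$
for every $n \ge 1$. In particular $|B| = |C^2| \le K^{O(1)} |A|$, and $|A^n| \le |C^n| \le K^{O(n)} |A|$ takes care of the final assertion of the proposition.

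Finally I would invoke the Ruzsa covering lemma: if $|YZ| \le L |Z|$ for finite $Y, Z \subset G$, then $Y \subset F \cdot Z Z^{-1}$ for some $F \subset Y$ with $|F| \le L$. (Take $F \subset Y$ maximal with the translates $\{fZ : f \in F\}$ pairwise disjoint; then $|F| \cdot |Z| = |FZ| \le |YZ| \le L|Z|$, and maximality forces $yZ \cap fZ \neq \emptyset$ for every $y \in Y$.) I would apply this with $Y := BB = C^4$ and $Z := C$: the Pl\"unnecke bound just proved gives $|C^4 \cdot C| = |C^5| \le K^{O(1)} |A| \le K^{O(1)} |C|$, so there is $F \subset BB$ with $|F| \le K^{O(1)}$ such that
$$BB \subset F \cdot C \cdot C^{-1} = F \cdot C^2 = F B,$$
using the symmetry $C = C^{-1}$. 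This exhibits $B$ as a $K^{O(1)}$-approximate subgroup of $G$.

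The main obstacle is the bookkeeping in the second paragraph: in a noncommutative group each triangle inequality only relates three specific difference sets $UV^{-1}$, so one must route the induction through carefully chosen intermediate sets to keep the accumulated constants polynomial in $K$ rather than exponential in $n$. Once the polynomial Pl\"unnecke-type bounds on $|C^n|$ are in hand, the covering step and the verification of the approximate-subgroup axioms are short.
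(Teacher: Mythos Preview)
Your approach is exactly the one the paper has in mind: its proof consists of the single sentence ``It is a simple application of the Ruzsa inequality and Ruzsa covering lemma'' together with a reference to Tao \cite{tao-noncommutative}, and you have correctly fleshed out precisely those two ingredients. The only cosmetic point is that the paper's definition of approximate subgroup asks for a \emph{symmetric} set $X$ with $BB \subset XB$, so after the covering lemma produces $F$ you should replace it by $F \cup F^{-1}$; this costs at most a factor of $2$ and does not affect the $K^{O(1)}$ bound.
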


\begin{proof} The proof is elementary. It is a simple application of the Ruzsa inequality and Ruzsa covering lemma. See \cite[Theorem 3.9]{tao-noncommutative} or \cite[Prop 2.2]{breuillard-clermont}.
\end{proof}

We remark that it is necessary to take the $3$-fold power of $A$ in the assumption of this proposition. It is not true if we only assume that $|AA| \leq K|A|$ (take $A=\{x\} \cup H$, where $x \in G$ and $H$ is a large subgroup such that $xHx^{-1} \cap H =\{1\}$). Nevertheless one can still show in this case that $A$ is covered by $O(K^{O(1)})$ left translates of an $O(K^{O(1)})$-approximate subgroup of $G$ of size at most $O(K^{O(1)})|A|$ (see \cite[Theorem 4.6]{tao-noncommutative})

\bigskip

A deeper fact, recorded in the lemma below, is that one can still identify an approximate subgroup ``near'' the finite set $A$ assuming only that $A$ does not grow under self multiplication in the following statistical sense:

$$||1_A * 1_A||_2^2 = |\{(a,b,c,d) \in A \times A \times A \times A ; ab=cd\}| \geq |A|^3/K.$$
The left hand side is called \emph{the multiplicative energy} of the set $A$ with itself and is sometimes denoted by $E(A,A)$. It is the $\ell^2$-norm squared of the convolution product of the indicator function of $A$ in $G$ with itself and is easily seen to be equal to the expression in the middle (number of ``collisions'' $ab=cd$). In other words: this condition means that the probability that $ab=cd$, when $a,b,c$ and $d$ are chosen at random in $A$ is at least $1/K|A|$. Clearly if $A$ is a subgroup, this probability if exactly $1/|A|$. Also easy to see is the remark that if $|AA|\leq K|A|$, then $||1_A * 1_A||_2^2 \geq |A|^3/K$, indeed setting $r(x):=|\{(a,b) \in A \times A ; ab=x\}|$ we have $\sum r(x)^2 = ||1_A *1_A||_2^2$, $\sum r(x) = |A|^2$ and $|\{x, r(x)>0\}|=|AA|$, hence applying Cauchy-Schwarz:

$$|A|^4 = (\sum r(x))^2 \leq |AA|(\sum r(x)^2) \leq K|A| \cdot ||1_A * 1_A||_2^2.$$

\begin{lemma}(Balog-Szemer\'edi-Gowers-Tao lemma)\label{balog} Suppose $A_1,A_2$ are finite subsets of a group $G$ such that $|A_1|\leq K|A_2|$ and $|A_2|\leq K|A_1|$ and assume that
$$||1_{A_1} * 1_{A_2}||_2^2 \geq (|A_1||A_2|)^{3/2}/K,$$
then there is a $O(K^{O(1)})$-approximate subgroup $A \subset G$ of size $O(K^{O(1)})|A_1|$ such that a subset of $A_1$ of size at least $|A_1|/O(K^{O(1)})$ is contained in some left translate of $A$ and similarly a subset of  $A_2$  of size at least $|A_2|/O(K^{O(1)})$ is contained in some right translate of $A$.
\end{lemma}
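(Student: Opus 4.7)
This is the non-commutative Balog-Szemer\'edi-Gowers theorem, and my plan is to follow Tao's adaptation of Gowers' graph-theoretic approach \cite[\S 5]{tao-noncommutative}; a detailed execution of a variant appears in Varj\'u \cite[Lemma 15]{varju}. The strategy is: unpack the energy hypothesis into a ``popular product set'' $P$, apply a graph-theoretic lemma to a bipartite graph determined by $P$ to find large subsets $A_1',A_2'$ with controlled product-set sizes, and assemble an approximate subgroup from these via Ruzsa calculus and Proposition \ref{tripling}.

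Concretely, with $r(x):=|\{(a,b)\in A_1\times A_2 : ab=x\}|$, a dyadic pigeonhole applied to $\sum_x r(x)^2\geq (|A_1||A_2|)^{3/2}/K$ produces a ``popular'' set $P\subset A_1 A_2$ on which $r(x)\gtrsim \sqrt{|A_1||A_2|}/K^{O(1)}$ and with $|P|\leq K^{O(1)}\sqrt{|A_1||A_2|}$, such that the bipartite graph $\Gamma$ on $A_1\sqcup A_2$ with edges $\{(a,b):ab\in P\}$ has density $\geq 1/K^{O(1)}$. Gowers' graph-theoretic BSG lemma, applied to $\Gamma$, then produces subsets $A_1'\subset A_1$, $A_2'\subset A_2$ of density $\geq 1/K^{O(1)}$ such that every pair $(a,a')\in A_1'\times A_1'$ shares $\geq |A_2|/K^{O(1)}$ common neighbours in $\Gamma$, and symmetrically for $A_2'$. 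Each common neighbour $b$ of $(a,a')$ gives the factorisation $a(a')^{-1}=(ab)(a'b)^{-1}\in P P^{-1}$, and counting representations yields $|A_1'(A_1')^{-1}|\leq K^{O(1)}|A_1|$, $|A_2'(A_2')^{-1}|\leq K^{O(1)}|A_2|$, and (via an additional length-$3$ path-counting argument across $\Gamma$) $|A_1'\cdot A_2'|\leq K^{O(1)}\sqrt{|A_1||A_2|}$.

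To assemble, set $H_1:=(A_1')^{-1}A_1'$, $H_2:=A_2'(A_2')^{-1}$, and $H:=H_1 H_2$; both $H_1$ and $H_2$ contain the identity, so $H_1\cup H_2\subset H$. The three size bounds above, fed into the non-commutative Pl\"unnecke-Ruzsa inequalities (cf.\ \cite[Theorem 3.9]{tao-noncommutative}), yield $|H|\leq K^{O(1)}|A_1|$ and $|H^3|\leq K^{O(1)}|H|$. Proposition \ref{tripling} then produces the required $K^{O(1)}$-approximate subgroup $A:=(H\cup H^{-1}\cup\{1\})^2$ of size $O(K^{O(1)})|A_1|$, with $H\subset A$. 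The covering assertions are immediate: for any $a_0\in A_1'$, $A_1'\subset a_0\cdot H_1\subset a_0\cdot A$, a left translate; for any $b_0\in A_2'$, $A_2'\subset H_2\cdot b_0\subset A\cdot b_0$, a right translate.

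The main obstacle is the graph-theoretic step. Getting a polynomial common-neighbour lower bound in $K$ is the quantitative heart of BSG and requires a careful second-moment argument on a randomly chosen ``pivot'' vertex of $\Gamma$ together with a Markov-type trimming of low-degree vertices; the product-set bound $|A_1'\cdot A_2'|\leq K^{O(1)}\sqrt{|A_1||A_2|}$ is the most delicate piece, requiring length-$3$ path counting in $\Gamma$ and a careful Cauchy-Schwarz. The polynomial exponents in $K$ that appear in the conclusion are determined by the losses in these two sub-steps.
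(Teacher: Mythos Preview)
Your sketch is correct and follows precisely the approach the paper defers to: the paper does not give a proof of this lemma but simply refers the reader to Tao--Vu \cite[\S 2.5, 2.7]{tao-vu} and Tao \cite[Theorem 5.4]{tao-noncommutative} (with \cite{breuillard-clermont} as an alternative), and your outline---dyadic pigeonhole to a popular product set, Gowers' bipartite graph lemma to extract $A_1',A_2'$ with controlled products, then Ruzsa calculus plus Proposition~\ref{tripling} to build the approximate subgroup---is exactly Tao's argument. One small quibble: the common-neighbour argument as you wrote it directly controls $A_1'(A_1')^{-1}$ and $(A_2')^{-1}A_2'$ rather than $H_1=(A_1')^{-1}A_1'$ and $H_2=A_2'(A_2')^{-1}$, but once $|A_1'A_2'|\leq K^{O(1)}|A_1'|$ is in hand the non-commutative Pl\"unnecke--Ruzsa machinery you invoke controls all bounded-length signed products of $A_1',A_2'$, so the assembly goes through as stated.
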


\begin{proof} We will not give the proof of this important combinatorial result here. Rather we refer the reader to the book by Tao and Vu \cite[\S 2.5, 2.7]{tao-vu} and Tao's paper \cite[Theorem 5.4]{tao-noncommutative}. See also \cite[Corollaries 4.5, 4.6.]{breuillard-clermont} for a somewhat different argument.
\end{proof}

Note that we cannot claim that $A_1$ itself is contained in few translates of $A$, because if the condition $||1_{A_1'} * 1_{A_2'}||_2^2> (|A'_1||A'_2|)^{3/2}/O(K^{O(1)})$ holds for some subsets $A_1',A_2'$ each making a proportion $\geq 1/O(K^{O(1)})$ of $A_1$ and $A_2$ respectively, then  $||1_{A_1} * 1_{A_2}||_2^2 \geq  ||1_{A_1'} * 1_{A_2'}||_2^2 \geq (|A_1||A_2|)^{3/2}/O(K^{O(1)}).$ For example if $A_1=A_2=\{1,\ldots,N\} \cup \{2,2^2, \ldots,2^N\}$, then $||1_{A_1} * 1_{A_1}||_2^2 \geq ||1_{\{1,\ldots,N\}} * 1_{\{1,\ldots,N\}}||_2^2 \geq N^3$, while $A_1$ is not contained in a bounded number of translates of multidimensional arithmetic progression in $\Z$, hence not contained in a bounded number of translates of an approximate subgroup of $\Z$ (using Theorem \ref{freiman-ru}).

\subsection{Classification of approximate subgroups of $\G(\F_q)$}

The main result here is the following:

\begin{theorem}(Classification theorem)\label{class} Let $K,M \geq 2$. Assume that $\G$ is an absolutely simple algebraic group of complexity at most $M$ defined over an algebraically closed field. If $A$ is a finite $K$-approximate subgroup of $\G$ which is $C$-sufficiently Zariski-dense in $\G$, then either $|A|\leq K^C$, or $\langle A \rangle$ is finite and of cardinality at most $K^{C}|A|$. Here $C=C(M)>0$ is a constant depending only on $M$ and $\dim \G$.
\end{theorem}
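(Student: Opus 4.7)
The plan is to follow the Pyber--Szabó and Breuillard--Green--Tao strategy, whose two main pillars are (a) an escape-from-subvarieties step that locates a regular semisimple element in a bounded power of $A$, and (b) a Larsen--Pink type intersection inequality bounding $|A \cap V|$ on any proper subvariety $V$ by a fractional power of $|A|$. Throughout, the $C$-sufficient Zariski-density hypothesis will be used to rule out $A$ being trapped in any algebraic subgroup of complexity below a threshold $M_0(M, \dim \G)$ which accumulates as the argument proceeds.

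First I would invoke Proposition \ref{tripling} (small tripling) to pass to a comparable $K^{O(1)}$-approximate subgroup with good control on all bounded powers $A^k$, and then use an Eskin--Mozes--Oh style escape argument (choosing $C$ large enough to defeat the bounded-complexity subvarieties of $\G$ that will appear) to produce a regular semisimple element $g \in A^{O(1)}$ whose centraliser $T := Z_\G(g)$ is a maximal torus of dimension $r = \rk(\G)$. Sufficient Zariski-density guarantees that such a $g$ exists, and also that $A$ will avoid a bounded list of ``bad'' subvarieties (normalisers of Levi subgroups, proper parabolics, maximal subgroups of bounded complexity) that feed into the next step.

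Second, the heart of the proof is a dimensional intersection estimate of Larsen--Pink type: for every proper closed subvariety $V \subset \G$ of complexity at most $M_0$,
\[
|A \cap V| \leq K^{C'} |A|^{\dim V / \dim \G}.
\]
This I would prove by induction on $\dim V$. The case $\dim V = 0$ is trivial. For the inductive step, one studies the $A$-conjugation orbit of $V$ and shows that either this orbit sweeps out a subvariety of strictly larger dimension (in which case one bootstraps the bound on the larger variety, using $|A^3| \leq K^{O(1)}|A|$ together with the complexity calculus of \cite{breuillard-green-tao-linear} to keep complexities under control), or else $A$ lies in the normaliser of $V$ up to an index bounded by a power of $K$, which by sufficient Zariski-density forces $V = \G$. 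Keeping the complexity of the auxiliary subvarieties produced through this induction bounded by a function of $M$ alone is the main technical obstacle, and is where the algebro-geometric substance of the proof resides.

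Finally I would apply the estimate with $V = T$ to get $|A \cap T| \leq K^{C'} |A|^{r/d}$ where $d = \dim \G$. On the other hand, the conjugates $\{a g a^{-1} : a \in A\}$ all sit in $A^{O(1)}$, and two such conjugates coincide iff the corresponding $a$'s lie in the same left coset of $T$, so
\[
|A|/|A \cap T| \leq |A^{O(1)}| \leq K^{O(1)} |A|,
\]
yielding $|A \cap T| \geq |A|/K^{O(1)}$. Combining the two inequalities gives $|A|^{1 - r/d} \leq K^{C''}$; since $r < d$ for any absolutely simple $\G$, this forces $|A| \leq K^{C}$, which is the first alternative of the dichotomy. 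The second alternative is triggered when the Zariski-closure of $\langle A \rangle$ is not all of $\G$: then sufficient Zariski-density forces $\langle A \rangle$ to be a \emph{finite} subgroup of $\G$, whereupon the classical Ruzsa covering lemma (small tripling of $A$ inside the finite group $\langle A \rangle$) delivers $|\langle A \rangle| \leq K^{O(1)} |A|$, as required.
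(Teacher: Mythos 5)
Your first two steps (escape to a regular semisimple element, and the approximate Larsen--Pink inequality $|A\cap \mathcal V|\leq K^{O(1)}|A|^{\dim\mathcal V/\dim\G}$ proved by induction with a transversality/escape argument) are exactly the paper's pillars, and the idea of playing the torus $T=Z(g)$ against the conjugacy class of $g$ via orbit--stabilizer is also the right one. But the endgame is broken in both branches of the dichotomy.

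First, the deduction of $|A|\leq K^C$ is wrong. Orbit--stabilizer gives $|A|\leq |\{aga^{-1}:a\in A\}|\cdot|A^2\cap Z(g)|$, and the orbit must be bounded by $|A^{O(1)}\cap \mathcal V_g|\leq K^{O(1)}|A|^{(d-r)/d}$ using Larsen--Pink on the \emph{conjugacy class} $\mathcal V_g$ (dimension $d-r$), not by $|A^{O(1)}|\leq K^{O(1)}|A|$. The correct conclusion is the matching \emph{lower} bound $|A^2\cap T|\geq |A|^{r/d}/K^{O(1)}$, which is perfectly consistent with the upper bound $|A^2\cap T|\leq K^{O(1)}|A|^{r/d}$ and yields no contradiction whatsoever; your claimed $|A\cap T|\geq |A|/K^{O(1)}$ does not follow (and from $|A|/|A\cap T|\leq K^{O(1)}|A|$ one only gets the vacuous $|A\cap T|\geq K^{-O(1)}$). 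Indeed $|A|\leq K^C$ cannot hold unconditionally: $A=\G(\F_q)$ is a $1$-approximate, sufficiently Zariski-dense subgroup with $|A|$ huge, and for it $|A\cap T|\approx q^r\ll|A|$. Second, in the remaining case the bound $|\langle A\rangle|\leq K^{O(1)}|A|$ does \emph{not} follow from Ruzsa covering: covering gives $|A^n|\leq K^{n-1}|A|$ for each fixed $n$, but $\langle A\rangle=\bigcup_n A^n$ may require exponentially many steps to stabilize (a cyclic group generated by one element is a $3$-approximate group with $|\langle A\rangle|$ arbitrarily large compared to $|A|$). This bound is precisely the hard content of the theorem. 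The paper obtains both halves of the dichotomy by one further idea you are missing: counting the \emph{involved tori} (maximal tori meeting $A^2$ in a regular element). The two-sided estimate $|A^2\cap T|\asymp K^{\pm O(1)}|A|^{r/d}$ shows there are at most $K^{O(1)}|A|^{(d-r)/d}$ involved tori; unless $|A|\leq K^{O(1)}$ this finite family is permuted by conjugation under $\langle A\rangle$, which first forces $\langle A\rangle$ to be finite (else its Zariski closure would normalize a bounded-complexity subtorus, contradicting sufficient density), and then orbit--stabilizer for the $\langle A\rangle$-action on involved tori, together with the group case of Larsen--Pink applied to $\langle A\rangle\cap N(T)$, gives $|\langle A\rangle|^{(d-r)/d}\leq K^{O(1)}|A|^{(d-r)/d}$, i.e.\ $|\langle A\rangle|\leq K^{O(1)}|A|$. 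Without this counting step your argument does not close.
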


The rest of this subsection is devoted to the proof of this theorem and some of its corollaries.

\bigskip

\noindent \emph{Remark.} Although this will not be used later on, we may replace $K^C$ in the above theorem by $CK^{3 \dim \G + 3}$, where $C$ depends again on $M$ and $\dim \G$.

\bigskip

Recall that an affine algebraic variety is said to have complexity at most $M$ if it is the vanishing locus of a finite set of polynomials whose sum of their total degree is at most $M$. This notion can be extended to all algebraic varieties (see \cite[Appendix A]{breuillard-green-tao-linear} for background). Recall further that a subset of $\G$ is called $M$-sufficiently Zariski-dense if it is not contained in a proper algebraic subvariety of complexity at most $M$.

\bigskip

This result was obtained by Green, Tao and the author in \cite[Theorem 5.5]{breuillard-green-tao-linear}. The proof of a closely related statement (in fact Corollary \ref{product} below) was derived independently at the same time by Pyber and Szab\'o, see \cite{pyber-szabo} and \cite{pyber-szabo-msri} for their point of view.

\bigskip

Simple and quasi-simple groups of Lie type are of the form $G=\G(\F_q)^\sigma/Z$, where $\G$ is a simply connected absolutely simple algebraic group defined and split over the prime field $\F_p$, $\sigma$ is a Frobenius map, i.e. the composition of a field automorphism and a graph automorphism, and $Z$ is a central subgroup (whose cardinal is bounded in terms of $\dim \G$ only). It is not difficult (for example using the Lang-Weil bounds or the related and easier Schwarz-Zippel estimates) to check that the subgroups $\G(\F_q)^\sigma$ of fixed points of $\sigma$ are $C$-sufficiently Zariski-dense in $\G$ whenever $q$ is larger than a constant depending only on $C$ and $\dim \G$ (see \cite[proposition 5.4]{bggt} for details). Thus a consequence of Theorem \ref{class} is the following:

\begin{corollary}\label{corfinite} Let $G$ be a (non-abelian) finite simple (or quasisimple) group of Lie type and suppose that $A$ is a $K$-approximate subgroup of $G$. Then either $|A| \leq K^C$, or $|A| \geq |G|/K^C$, or $A$ is contained in a proper subgroup of $G$. Here $C>0$ is a constant depending only on the rank of $G$, not on the size of the associated finite field.
\end{corollary}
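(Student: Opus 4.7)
The plan is to reduce Corollary \ref{corfinite} to Theorem \ref{class} by lifting $A$ to the universal algebraic cover and then invoking escape-from-subvarieties together with the small tripling proposition. First I would dispose of small $q$: if $q$ is bounded by some constant $q_0$ depending only on the rank of $G$, then $|G|$ itself is bounded, so the first alternative $|A|\le |G|\le K^C$ holds trivially. So assume $q>q_0$. Write $G=\G(\F_q)^\sigma/Z$ as in the paragraph preceding the corollary, with $|Z|$ bounded by a constant depending only on the rank, and let $\pi\colon \G(\F_q)^\sigma\to G$ be the projection. Set $\tilde A:=\pi^{-1}(A)\subseteq \G(\F_q)^\sigma\subseteq \G(\overline{\F_p})$. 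A quick check (lifting the covering set $X\subseteq G$ to $\G(\F_q)^\sigma$) shows that $\tilde A$ is symmetric, contains $1$, has cardinality $|Z|\cdot|A|$, and is itself a $K$-approximate subgroup of $\G(\overline{\F_p})$.

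Now set $H:=\langle A\rangle\le G$. If $H\ne G$, the third alternative of the corollary is already achieved, so I may assume $H=G$, which yields $\langle \tilde A\rangle=\G(\F_q)^\sigma$. For $q_0$ large enough, a standard Lang--Weil or Schwartz--Zippel count (see e.g.\ \cite[Proposition 5.4]{bggt}) shows that $\G(\F_q)^\sigma$ is $C$-sufficiently Zariski-dense in $\G$, where $C$ is the constant from Theorem \ref{class} applied to $\G$. Thus $\langle\tilde A\rangle$ is $C$-sufficiently Zariski-dense in $\G$, and by the Eskin--Mozes--Oh escape-from-subvarieties lemma (see \cite[Lemma 3.11]{breuillard-green-tao-linear}) there is an integer $n=n(C,\dim\G)$ such that already the product set $\tilde A^n$ fails to be contained in any proper subvariety of $\G$ of complexity $\le C$, i.e.\ $\tilde A^n$ is $C$-sufficiently Zariski-dense in $\G$.

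The next step is to replace the power $\tilde A^n$ by a genuine approximate subgroup of comparable size. The approximate-subgroup property for $\tilde A$ yields $|\tilde A^{3n}|\le K^{3n-1}|\tilde A|$, so the tripling constant of $\tilde A^n$ is at most $K^{3n-1}$; Proposition \ref{tripling} then produces a $K^{O_n(1)}$-approximate subgroup $B$ (for instance $B=\tilde A^{2n}$) of $\G(\overline{\F_p})$ of cardinality $|B|\le K^{O_n(1)}|A|$ containing the $C$-sufficiently Zariski-dense set $\tilde A^n$, so $B$ itself is $C$-sufficiently Zariski-dense. Applying Theorem \ref{class} to $B$ leaves two cases. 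Either $|B|\le (K^{O_n(1)})^C=K^{C'}$, whence $|A|\le|B|\le K^{C'}$ and we are in the first alternative; or $\langle B\rangle$ is a finite subgroup of $\G(\overline{\F_p})$ of cardinality at most $K^{C'}|B|$, which must contain $\langle\tilde A\rangle=\G(\F_q)^\sigma$, forcing $|\G(\F_q)^\sigma|\le K^{C''}|A|$ and hence $|A|\ge|G|/K^{C''}$, the second alternative. The only real content is Theorem \ref{class} itself; what remains is the bookkeeping around the central kernel $Z$, the Zariski-density of $\G(\F_q)^\sigma$, and the escape-from-subvarieties step, which I expect to be the only place requiring any technical care.
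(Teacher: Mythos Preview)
Your argument is correct and follows essentially the same route as the paper's proof: assume $A$ generates $G$, use the Zariski-density of $\G(\F_q)^\sigma$ together with escape from subvarieties to make a bounded power of $A$ sufficiently Zariski-dense, and then apply Theorem \ref{class}. You have simply made explicit two points the paper leaves implicit, namely the lift through the central quotient $\pi\colon \G(\F_q)^\sigma\to G$ and the passage from the power $\tilde A^n$ to a genuine approximate subgroup via Proposition \ref{tripling}.
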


\begin{proof} By the discussion above, we may assume that $G$ is a sufficiently Zariski-dense subgroup of a simple algebraic group $\G$ of bounded complexity. It only remains to check that if $A$ generates $G$, then there is a bounded $m$ such that $A^m$ is sufficiently Zariski-dense and then apply Theorem \ref{class} to $A^m$. This fact goes back to Eskin-Mozes-Oh \cite[Prop. 3.2]{EMO}. It is a basic tool called since \emph{escape from subvarieties}, which can be proved with explicit bounds using Bezout's theorem. It can also easily be proved (without an explicit bound on $m$) using ultraproducts: if no such $m$ existed we could form the ultraproduct of possible counter-examples, yielding a subset of $\G(K)$, where $K$ is the corresponding ultraproduct of fields, which generates a subgroup which is not Zariski-dense, hence is contained in a proper algebraic subgroup of $\G(K)$. But this means that most (for the ultrafilter) counter-examples are contained in that algebraic subgroup, contradicting the assumption. See \cite[Lemma 3.11]{breuillard-green-tao-linear} for more details regarding this argument.
\end{proof}

Another related statement is the following, sometimes called the \emph{product theorem}, because it guarantees that any generating subset of $G$ grows under products:

\begin{corollary}(Product theorem)\label{product} Let $G$ be a (non-abelian) finite simple (or quasi-simple) group of Lie type and $A \subset G$ an arbitrary generating finite subset, then
$$|AAA| \geq \min\{|A|^{1+\eps},|G|\},$$
where $\eps>0$ is a constant depending only on the rank of $G$, not on the size of the associated finite field.
\end{corollary}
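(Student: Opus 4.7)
The plan is to derive the product theorem from the classification of approximate subgroups (Corollary~\ref{corfinite}) via the small tripling reduction of Proposition~\ref{tripling}. Suppose for contradiction that $|AAA| < \min\{|A|^{1+\eps},|G|\}$. After replacing $A$ by $A \cup A^{-1} \cup \{1\}$ (which inflates all sizes only by bounded factors), I may assume $A$ is symmetric and contains $1$, so that in particular $A \subseteq A^2 \subseteq AAA$. Setting $K := |AAA|/|A| < |A|^{\eps}$, Proposition~\ref{tripling} makes $B := A^2$ a $K'$-approximate subgroup of $G$ with $|B| \leq K'|A|$ and $K' = O(K^{O(1)}) = |A|^{O(\eps)}$. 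Since $A$ generates $G$, so does $B$, and in particular $B$ lies in no proper subgroup of $G$.

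Applying Corollary~\ref{corfinite} to $B$ then leaves two alternatives. In the first, $|B| \leq (K')^{C_0}$ with $C_0$ depending only on the rank of $G$; combined with $|A| \leq |B|$ this yields $|A| \leq |A|^{C_1\eps}$ for some $C_1=O_{\rk G}(1)$, which, once $\eps$ is chosen with $C_1\eps < 1/2$, forces $|A|$ to be bounded by a constant $M_0$ depending only on the rank. In this bounded regime, if $AAA = A$ then (since $1 \in A$) $A$ is closed under multiplication, hence a subgroup, hence all of $G$ by the generation hypothesis -- contradicting $|A| \leq M_0$ once $|G|$ is large. Thus $|AAA| \geq |A|+1$, which exceeds $|A|^{1+\eps}$ as soon as $\eps < \min_{2 \leq k \leq M_0}\log(1+1/k)/\log k$, closing this case. (The case $|G|$ bounded is trivial.)

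In the second alternative, $|B| \geq |G|/(K')^{C_0}$, which combined with $|B| \leq K'|A|$ gives $|A| \geq |G|^{1-O(\eps)}$. To conclude $AAA = G$ (contradicting $|AAA| < |G|$), I would invoke the Gowers--Babai--Nikolov--Pyber quasirandomness principle: if every non-trivial complex representation of $G$ has dimension at least $D$, then any $X,Y,Z \subseteq G$ with $|X||Y||Z| > |G|^3/D$ satisfy $XYZ = G$. The Landazuri--Seitz bound (cf.\ Remark~\ref{nosmallindex}) provides $D \geq |G|^{\beta}$ for some $\beta = \beta(\rk G) > 0$, so taking $\eps$ small enough in terms of $\beta$ guarantees $|A|^3 \geq |G|^{3-\beta} \geq |G|^3/D$, whence $AAA = G$.

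The hard part is this second case: the small-tripling plus classification machinery by itself only yields the weaker bound $|AAA| \geq |G|^{1-O(\eps)}$, and upgrading this to the sharp equality $AAA = G$ requires importing the external representation-theoretic input of Landazuri--Seitz, applied through Gowers' quasirandom mixing. Balancing the exponents so that a single $\eps = \eps(\rk G) > 0$ works simultaneously in both alternatives, and verifying the ``sufficiently Zariski-dense'' hypothesis of Theorem~\ref{class} for iterated powers of $B$ (the escape-from-subvarieties step sketched in the proof of Corollary~\ref{corfinite}), are then essentially bookkeeping once these ingredients are in hand.
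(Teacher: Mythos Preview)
Your proposal is correct and follows essentially the same route as the paper: assume failure, apply the small-tripling reduction (Proposition~\ref{tripling}) to produce an approximate subgroup, invoke Corollary~\ref{corfinite}, and in the large-$|A|$ alternative finish with the Gowers/Nikolov--Pyber quasirandomness argument (via the Landazuri--Seitz bound). Your treatment of the bounded-$|A|$ alternative is in fact more careful than the paper's, which simply asserts that this case forces $|A|=1$; your argument that $|AAA|\geq |A|+1$ suffices once $|A|\leq M_0$ is the honest way to close that endpoint.
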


This result was obtained by Pyber and Szab\'o \cite[Theorem 4]{pyber-szabo}. We show below how to derive it from the classification of approximate subgroups, i.e. Theorem \ref{class}.

\begin{proof} Let $K=|A|^{\eps}$ and apply Proposition \ref{tripling} to get a $(2|A|)^{C\eps}$-approximate subgroup $B$ containing $A$, where $C>0$ is an absolute constant. By Corollary \ref{corfinite}, either $|A| \leq K^C$, or $|A|\geq |G|/K^C$. The first case is ruled out if $\eps<1/2C^2$, because that would force $|A|=1$. In the second case $|A| \geq |G|^{1-\delta}$ for $\delta>0$ which can be taken arbitrarily small provided $\eps$ is small enough. Then a general result of Nikolov-Pyber \cite{nikolov-pyber}, based on an observation of Gowers \cite{gowers} using the quasirandomness of $G$ (i.e. Proposition \ref{quasi} below), implies that $AAA=G$. See \cite[Corollary 2.3.]{breuillard-msri} for a detailed proof of this last step using basic representation theory of finite groups.
\end{proof}

Corollary \ref{product} was first proved by Helfgott \cite{helfgott} in the special cases of $\SL_2(\F_p)$, for the prime field $\F_p$ only, using some \emph{ad hoc} matrix computations based on the sum-product phenomenon from additive combinatorics (i.e. the Bourgain-Katz-Tao theorem \cite{bourgain-katz-tao}). Helfgott later settled the case of $\SL_3(\F_p)$ in \cite{helfgottSL3}.  Earlier work of Elekes and Kir\'aly \cite{elekes-kiraly} had dealt with the analogous result for $\SL_2(\R)$. Although these elementary methods fail to extend to the general case, they have the merit of being somewhat more explicit on the $\eps$ (see e.g. \cite{kowalski-explicit}, \cite{button}).

\bigskip

\noindent \emph{Remark.} Our lower bound on $\eps$ is not explicit. However, if one assumes further that the subset $A$ is $C$-sufficiently Zariski-dense in the ambient simple algebraic group $\G$ (i.e. is not contained in any proper algebraic subvariety of degree, or complexity, at most $C$ for some non explicit $C$ depending only on $\G$), then $\eps$ can be taken to be $1/(3 \dim \G + 4)$. See Remark \ref{explicitconstant} below. The constant $C$ itself (and hence the $\eps$ of Corollary \ref{product}) can be made effective (although not really explicit) using effective algebraic geometry bounds as done by Pyber-Szab\'o in \cite{pyber-szabo}. The treatment in \cite{breuillard-green-tao-linear} was not effective, since it used ultrafilters.

\bigskip

We will sketch below the proof of Theorem \ref{product}. The proof is germane to the proof of the Larsen-Pink theorem \cite{larsen-pink} on the classification of finite subgroups of $\G$. Let us first state a version of the Larsen-Pink theorem appropriate to our discussion (see \cite[Theorem 0.5]{larsen-pink} and \cite{hrushovski-wagner}).

\begin{theorem}(Larsen-Pink theorem)\label{larsen-pink-thm} Let $F$ be an algebraically closed field and $\G$ be an absolutely simple simply connected algebraic group of complexity at most $M$ defined and split over the prime field of $F$. If $\Gamma$ is a finite subgroup of $\G$ which is $C$-sufficiently Zariski-dense in $\G$, then the field $F$ has positive characteristic $p$ and $\Gamma$ is a conjugate of the subgroup $\G(\F_q)$ for a finite field $\F_q \leq F$, $q$ a power of $p$. Here $C=C(M)>0$ is a constant depending only on $M$ and $\dim \G$.
\end{theorem}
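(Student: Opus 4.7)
My plan is to reduce this theorem to Nori's theorem (Theorem \ref{norithm}) and Jordan's theorem, by a strategy parallel to the proof sketch of Theorem \ref{suffdense}. I would argue in three steps: first rule out characteristic zero, then apply Nori over a suitable finite subfield, and finally convert Nori's output into the desired equality.

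First, I would rule out $\mathrm{char}(F) = 0$. Jordan's theorem (in the classical form applicable to any finite subgroup of $\GL_n$ with no non-trivial unipotent element) produces a normal abelian subgroup $A \trianglelefteq \Gamma$ of index bounded in terms of $\dim \G$. The Zariski closure $\overline{A}$ is an abelian algebraic subgroup of $\G$; if $\overline{A}$ is central it is finite and $\Gamma$ itself has bounded size, hence sits in a subvariety of bounded complexity. Otherwise its normalizer $N_\G(\overline{A})$ is a proper algebraic subgroup of $\G$ of complexity bounded in terms of $\dim \G$, and since $\Gamma$ normalizes $A$ it sits inside $N_\G(\overline{A})$. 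Either way we contradict sufficient Zariski-density once $C$ is chosen large enough.

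Next, in characteristic $p > 0$, finiteness of $\Gamma$ places all matrix entries in a finite subfield $\F_q \leq F$, so $\Gamma \leq \G(\F_q) \leq \GL_n(\F_q)$. Applying Nori's theorem (Theorem \ref{norithm}) to $\Gamma$ --- possibly after restriction of scalars from $\F_q$ to $\F_p$, while checking that the Nori threshold depends only on $\dim \G$ --- produces a connected algebraic subgroup $\widetilde{\Gamma}$ with $\Gamma^+ = \widetilde{\Gamma}(\F_q)^+$, where $\Gamma^+$ denotes the subgroup generated by elements of order $p$. Since $\Gamma^+ \leq \G$ and $\widetilde{\Gamma}$ is essentially the Zariski closure of $\Gamma^+$, we have $\widetilde{\Gamma} \leq \G$. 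The subgroup $\Gamma^+$ is conjugation-invariant, hence normalized by $\Gamma$, and so is $\widetilde{\Gamma}$. If $\widetilde{\Gamma}$ were a proper non-trivial connected subgroup of $\G$, absolute simplicity would force $N_\G(\widetilde{\Gamma})$ to be a proper algebraic subgroup of bounded complexity, contradicting sufficient Zariski-density. If $\widetilde{\Gamma}$ were trivial, then $\Gamma$ would have no element of order $p$ and Jordan's theorem would apply exactly as in the characteristic zero step above. Therefore $\widetilde{\Gamma} = \G$, giving $\Gamma^+ = \G(\F_q)^+$.

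Finally, the last assertion of Nori's theorem gives $[\G(\F_q) : \G(\F_q)^+] \leq c(\dim \G)$, so $[\G(\F_q) : \Gamma] \leq c(\dim \G)$. By Remark \ref{nosmallindex}, the simply connected absolutely simple group $\G(\F_q)$ has no proper subgroup of index bounded in terms of $\dim \G$ once $q$ is larger than some $q_0(\dim \G)$; for smaller $q$, $|\G(\F_q)|$ itself is bounded and $\Gamma$ sits in a subvariety of bounded complexity, again contradicting sufficient Zariski-density. Hence $\Gamma = \G(\F_q)$ (up to the conjugation inherent in the choice of $\F_q$-structure). The main obstacle, I expect, is the second step: running Nori's theorem in $\GL_n(\F_q)$ with constants depending only on $\dim \G$ (not on $[\F_q : \F_p]$), and verifying that the normalizer of any proper connected algebraic subgroup of $\G$ has complexity bounded in terms of $\dim \G$, which requires some effective algebraic geometry.
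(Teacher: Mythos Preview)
Your reduction to Nori's theorem has a genuine gap, and it is precisely the obstacle you flag at the end: the dependence on $[\F_q:\F_p]$ cannot be removed. Nori's theorem (Theorem \ref{norithm}) requires $p > c(n)$; the exp/log correspondence between unipotent elements and nilpotent matrices that underpins Nori's argument breaks down when $p \leq n$. If you attempt restriction of scalars from $\F_q$ to $\F_p$, the ambient dimension becomes $n[\F_q:\F_p]$ and the Nori threshold becomes $c(n[\F_q:\F_p])$, which for fixed small $p$ and growing $q$ can never be met. This is not a technicality but the entire content of the theorem: the Larsen--Pink statement is precisely the extension of Nori's result to the regime where $p$ may be small relative to the dimension while $q$ is large. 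The paper says this explicitly: Nori's argument ``uses crucially that $p$ is large,'' whereas ``Larsen and Pink argue differently.''

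The approach the paper describes (which is Larsen and Pink's actual proof) is entirely different from yours. One first establishes the non-concentration inequality (Proposition \ref{larsen-pink-prop}) via the induction-on-$\alpha$ scheme sketched in the proof of Proposition \ref{lp-prop-app}; this gives two-sided control on $|\Gamma \cap Z(a)|$ as in $(\ref{cent})$ for every centralizer, uniformly in $p$. One then runs a Jordan-style argument on these centralizers to produce many unipotent elements, and among them a minimal one whose one-parameter subgroup exhibits the additive group of the finite field $\F_q$ one is trying to construct from $\Gamma$. Nothing in this route invokes the exponential map or requires $p$ to exceed the dimension, which is why it succeeds where Nori's method cannot. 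Your first and third steps are fine, and for $p$ large enough your argument essentially reproves Theorem \ref{suffdense}; but the small-$p$ case is the whole point here, and Nori cannot bridge it.
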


This theorem is a strict generalization of Nori's Theorem \ref{suffdense} discussed earlier in the case of simple algebraic groups. However the proof by Larsen and Pink is very different from Nori's counting argument sketched in Theorem \ref{suffdense} above. While Nori was building the algebraic subgroup from below taking products of unipotent elements and using crucially that $p$ is large, Larsen and Pink argue differently and cut the group from above so to speak by computing the approximate size of the centralizers in $\Gamma$ of any subset of elements. This allows them to eventually find many unipotent elements (using an argument similar to the original argument of Jordan \cite{jordan, breuillard-jordan}) including a minimal one which will generate the additive subgroup of the finite field $\F_q$ that we are required to build from $\Gamma$ alone.

In order to compute the correct size of centralizers, Larsen and Pink establish first a very general inequality, the Larsen-Pink non-concentration estimate, which gives an a priori upper bound on the intersection of $\Gamma$ with any algebraic subvariety of bounded complexity. Namely:

\begin{proposition}(Larsen-Pink non-concentration estimate \cite[Thm 4.2.]{larsen-pink})\label{larsen-pink-prop} Under the assumptions of Theorem \ref{larsen-pink-thm}, consider a closed algebraic subvariety $\mathcal{V}$ of $\G$ of complexity at most $M$. Then if $\Gamma$ is a finite subgroup of $\G$ which is $C$-sufficiently Zariski-dense in $\G$,
\begin{equation}\label{lp}
|\Gamma \cap \mathcal{V}| \leq C|\Gamma|^{\frac{\dim \mathcal{V}}{\dim \G}},
\end{equation}
where $C=C(M)>0$ is a constant depending only on $M$ and $\dim \G$.
\end{proposition}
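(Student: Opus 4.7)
\proof (Proposal) The plan is to prove the estimate by induction on $d=\dim\mathcal{V}$, with the double-counting identity
\begin{equation*}
|\Gamma\cap\mathcal{V}|^{2} \;=\; \sum_{g\in\Gamma}\,|\Gamma\cap\mathcal{V}\cap g\mathcal{V}|
\end{equation*}
serving as the engine. First I would reduce to the case where $\mathcal{V}$ is irreducible, using that a variety of complexity at most $M$ has a bounded number of irreducible components, each itself of bounded complexity (e.g.\ by \cite[Lemma A.4]{breuillard-green-tao-linear}). The base case $d=0$ is trivial since then $|\mathcal{V}|\leq C(M)$. Throughout I need to track that every auxiliary subvariety I produce has complexity bounded in terms of $M$ and $\dim\G$; this is a routine but essential effective algebraic geometry bookkeeping (Bezout/elimination).

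For the induction step, consider the Zariski closure $\overline W$ of the image of the difference map $\mu\colon\mathcal{V}\times\mathcal{V}\to\G$, $(x,y)\mapsto xy^{-1}$. Set $w=\dim\overline W$; a fixed slice $x_{0}\mathcal{V}^{-1}\subseteq\overline W$ shows $w\geq d$, while clearly $w\leq\min(2d,D)$. In the \emph{non-degenerate} case $w>d$, upper semicontinuity of fiber dimension gives a proper closed subvariety $B\subsetneq\overline W$ of bounded complexity such that for $g\in\overline W\setminus B$ the fiber $\mathcal{V}\cap g\mathcal{V}$ has dimension $2d-w<d$, and for $g\in B$ the fiber still has dimension at most $d$. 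Combining these with the induction hypothesis applied to the fibers, to $B$, and to $\overline W$ (dim $<D$), I would bound the right-hand side of the double-counting identity by $C|\Gamma|^{w/D}\cdot|\Gamma|^{(2d-w)/D}=C|\Gamma|^{2d/D}$, yielding the claim after taking square roots.

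The \emph{degenerate} case $w=d$ is the main obstacle, and is where I expect the real difficulty to lie. Here the generic fiber $\mathcal{V}\cap g\mathcal{V}$ has full dimension $d$, which by irreducibility forces $g\mathcal{V}=\mathcal{V}$ for $g$ in an open dense subset of $\overline W$. This means $\mathcal{V}$ is a coset $x_{0}H$ of a closed algebraic subgroup $H\leq\G$ with $\dim H=d$ of bounded complexity, and the problem collapses to bounding $|\Gamma\cap H|\leq C|\Gamma|^{d/D}$. One cannot simply reinvoke the induction, since $\dim H=d$. I would handle this by working with the $\Gamma$-action on the homogeneous space $\G/H$: the map $\gamma\mapsto\gamma H$ has fibers of size $|\Gamma\cap H|$ and its image is a finite Zariski-dense subset of $\G/H$, of dimension $D-d$. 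A parallel Larsen--Pink-style non-concentration estimate on quasi-projective homogeneous $\G$-varieties (proved by the same double-counting philosophy, but on the homogeneous space equipped with an equivariant embedding into a $\G$-representation) then forces the image to have size at least $c|\Gamma|^{(D-d)/D}$, which is equivalent to the desired bound. The simplicity of $\G$ enters here to ensure that $H$ is proper (hence $D-d>0$) and that the relevant embeddings exist with bounded complexity.

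Finally, the sufficient Zariski-density of $\Gamma$ is used in two places: to guarantee that $|\Gamma|$ is larger than any bound depending only on $M$ and $D$ (absorbing additive error terms from non-generic fiber contributions), and to control the image of $\Gamma$ in the various auxiliary varieties produced during the induction. Assembling these pieces, both the non-degenerate and degenerate cases yield $|\Gamma\cap\mathcal{V}|\leq C|\Gamma|^{d/D}$ with $C=C(M,D)$, completing the induction.
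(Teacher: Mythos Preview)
Your double-counting identity is correct, but the induction scheme you propose does not close. You say you induct on $d=\dim\mathcal V$, yet in the non-degenerate step you need the estimate for $\overline W$, and you yourself note that $w=\dim\overline W\geq d$ (indeed $d\leq w\leq\min(2d,D)$). So you cannot invoke the induction hypothesis on $\overline W$, nor on the bad locus $B\subset\overline W$, whose dimension may also exceed $d-1$. If instead you bound $|\Gamma\cap\overline W|$ trivially by $|\Gamma|$, the arithmetic gives $|\Gamma\cap\mathcal V|\leq C|\Gamma|^{(D+2d-w)/2D}$, which only yields the target exponent $d/D$ when $w=D$; for $d<w<D$ you are stuck. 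This is exactly why the Larsen--Pink argument (and the paper's proof of the more general approximate-group version, Proposition~\ref{lp-prop-app}) does \emph{not} induct on $\dim\mathcal V$ alone. The paper instead runs an induction on an auxiliary exponent $\alpha\in[\tfrac{1}{\dim\G},1]$: assume $|\Gamma\cap\mathcal V|\leq C|\Gamma|^{\alpha\dim\mathcal V}$ holds for \emph{all} subvarieties, and show the same with $\alpha$ replaced by $\alpha-\tfrac{1}{(\dim\G)^2}$. Since the bound is trivially true at $\alpha=1$, finitely many steps reach $\alpha=\tfrac{1}{\dim\G}$. This scheme lets you freely apply the current hypothesis to varieties of any dimension, which is precisely what you were missing.

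Your treatment of the degenerate case $w=d$ is also where the paper's argument diverges sharply. You correctly deduce that $\mathcal V$ is then a coset of a closed subgroup $H$, but your appeal to a ``parallel Larsen--Pink-style non-concentration estimate on homogeneous $\G$-varieties'' is no easier than the original problem and is not something you can take for granted. The paper bypasses this case entirely: rather than looking at $\mathcal V\mathcal V^{-1}$, it picks simultaneously a minimal-dimension counterexample $\mathcal V^{-}$ and a minimal-\emph{codimension} counterexample $\mathcal V^{+}$ and considers the product $\mathcal V^{-}a\mathcal V^{+}a^{-1}$ for a suitable $a\in\Gamma$. The key input is Lemma~\ref{transv} (the ``transverse conjugate'' lemma), which uses the simplicity of $\G$ and the sufficient Zariski-density of $\Gamma$ to guarantee that for some $a$ this product has dimension strictly larger than $\dim\mathcal V^{+}$. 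This forces a contradiction with the maximality of $\dim\mathcal V^{+}$ and rules out the degenerate scenario without any separate homogeneous-space argument. In short, the missing idea is the exponent-improvement induction coupled with the two-sided min/max counterexample setup; your one-variable induction on $\dim\mathcal V$ cannot be made to work as written.
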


Before we say more about the proof of this proposition and its relation to approximate subgroups, let us explain what it entails for centralizers. Define $q_\Gamma$ as the positive real number $|\Gamma|^{1/\dim \G}$. Let $Z_a$ be the centralizer in $\G$ of an element $a \in \Gamma$. The orbit-stabilizer formula tells us that $$| Z_a \cap \Gamma| \cdot | \{\gamma a \gamma^{-1} ; \gamma \in \Gamma\}| = |\Gamma|,$$
so $$q_\Gamma^{\dim \G}=|\Gamma| \leq |Z_a \cap \Gamma| |\mathcal{V}_a \cap \Gamma| \leq  |Z_a \cap \Gamma| \cdot C q_\Gamma^{\dim \mathcal{V}_a},$$  where $\mathcal{V}_a$ is the conjugacy class of $a$ in $\G$, which is a constructible set in $G$, being the image of $\G$ under the map $g \mapsto gag^{-1}$. We applied the Larsen-Pink inequality $(\ref{lp})$ to the Zariski closure of $\mathcal{V}_a$, which also has dimension $\dim \mathcal{V}_a = \dim \G - \dim Z_a$. Now applying $(\ref{lp})$ once again but this time to $Z_a$ we obtain:

\begin{equation}\label{cent}
\frac{1}{C} q_\Gamma^{\dim Z_a} \leq |Z_a \cap \Gamma| \leq C q_\Gamma^{\dim Z_a}.
\end{equation}
The constant $C$ depends only on the complexity of $Z_a$ and the closure of $\mathcal{V}_a$, which are both bounded in terms of $\dim \G$ and the complexity of $\G$ only and are in particular independent of $a$ (see e.g. \cite[Appendix A]{breuillard-green-tao-linear} for general facts on the complexity of algebraic varieties). So we see that the Larsen-Pink inequality $(\ref{lp})$ not only gives an upper bound, but also a lower bound of the same order of magnitude on the size of centralizers.

The proof of Theorem \ref{class} rests on the same key idea. The main step consists in extending the Larsen-Pink inequality $(\ref{lp})$ to the setting of approximate subgroups:

\begin{proposition}(Larsen-Pink for approximate subgroups)\label{lp-prop-app} Let $K,M \geq 2$. Assume that $\G$ is an absolutely simple algebraic group of complexity at most $M$ defined over an algebraically closed field. If $A$ is a finite $K$-approximate subgroup of $\G$ which is $C$-sufficiently Zariski-dense in $\G$, then for every closed algebraic subvariety $\mathcal{V}$ of $\G$ of complexity at most $M$,
\begin{equation}\label{lpapp}
|A \cap \mathcal{V}| \leq C K^C |A|^{\frac{\dim \mathcal{V}}{\dim \G}},
\end{equation}
where $C=C(M)>0$ is a constant depending only on $M$ and $\dim \G$.
\end{proposition}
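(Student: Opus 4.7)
My plan is to prove the bound by strong induction on $d := \dim \mathcal{V}$. The base case $d = 0$ is immediate: a zero-dimensional subvariety of complexity at most $M$ consists of at most $O_M(1)$ points by B\'ezout, and the target $|A|^{d/\dim \G}$ equals $1$. For the inductive step one reduces to the case $\mathcal{V}$ irreducible, since decomposing into irreducible components yields $O_M(1)$ pieces of complexity bounded in terms of $M$ and $\dim \G$ (see Appendix A of \cite{breuillard-green-tao-linear}), and the estimate for a union follows from the estimates for its components.

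For irreducible $\mathcal{V}$ of dimension $d \geq 1$, the central device is a multiplicative-energy / double-counting identity. For any pair $(a,b) \in (A \cap \mathcal{V})^2$ the element $g := ab^{-1}$ lies in $AA^{-1}$ and satisfies $a \in \mathcal{V} \cap g\mathcal{V}$; conversely, for fixed $g$ the pair is determined by $a$. This yields
\[
|A \cap \mathcal{V}|^{2} \;\leq\; \sum_{g \in AA^{-1}} |A \cap \mathcal{V} \cap g\mathcal{V}|.
\]
One then splits the sum according to whether $g$ lies in the setwise left-stabilizer $H := \{h \in \G : h\mathcal{V} = \mathcal{V}\}$ of $\mathcal{V}$. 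Since $\G$ is absolutely simple and $\mathcal{V}$ is proper, $H$ is a proper algebraic subgroup of $\G$ of dimension strictly less than $\dim \G$ and of complexity controlled by $M$ and $\dim \G$. For $g \in H$ the intersection equals $\mathcal{V}$, contributing $|AA^{-1} \cap H| \cdot |A \cap \mathcal{V}|$; the factor $|AA^{-1} \cap H|$ is then bounded by the inductive hypothesis applied to the proper subvariety $H$ (after an application of the covering axiom $AA^{-1} \subseteq XA$ with $|X| \leq K$). For $g \notin H$, the intersection $\mathcal{V} \cap g\mathcal{V}$ is a proper closed subvariety of $\mathcal{V}$, hence of dimension at most $d - 1$ and of controlled complexity, and the inductive hypothesis again applies. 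Combining both contributions with $|AA^{-1}| \leq K|A|$ yields a (sub)quadratic inequality in $|A \cap \mathcal{V}|$ whose solution gives the claimed bound $|A \cap \mathcal{V}| \leq CK^{C} |A|^{d/\dim \G}$.

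The principal obstacle is the careful bookkeeping of the $K^{C}$ factors through the induction: each inductive step multiplies the accumulated constant by $K^{O(1)}$, arising from small tripling (Proposition \ref{tripling}) and from the covering inequality $|AA^{-1} \cap \mathcal{W}| \leq K \max_x |A \cap x\mathcal{W}|$. Since the induction depth is at most $\dim \G$, the final factor is $K^{O(\dim \G)}$, absorbed in $K^{C}$. A secondary technicality is propagating the complexity bounds through algebraic operations (intersections, translates, and stabilizer computations), forcing us to set up the induction with a sufficiently generous uniform complexity parameter from the start; this is standard effective algebraic geometry in the spirit of Appendix A of \cite{breuillard-green-tao-linear}. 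Apart from this bookkeeping, the argument is directly modelled on the Larsen-Pink proof of Proposition \ref{larsen-pink-prop}, with the approximate-group axiom $AA \subseteq XA$ playing the role that the exact group law plays there.
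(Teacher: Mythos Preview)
Your argument has a genuine gap: the induction collapses precisely when $\mathcal{V}$ is (a coset of) an algebraic subgroup. In that case the left stabilizer $H=\{h:h\mathcal{V}=\mathcal{V}\}$ satisfies $\dim H=\dim\mathcal{V}=d$, so you cannot invoke the inductive hypothesis on $H$. Worse, take the prototypical case $\mathcal{V}=T$ a maximal torus in $\SL_2$: for any pair $(a,b)\in(A\cap T)^2$ the quotient $g=ab^{-1}$ already lies in $T=H$, so the sum over $g\notin H$ is empty and your identity reduces to $|A\cap T|^2\leq |AA^{-1}\cap T|\cdot|A\cap T|$, i.e.\ $|A\cap T|\leq |AA^{-1}\cap T|$, which is trivially true and gives no bound of the form $|A|^{1/3}$. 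The same obstruction occurs for any coset of a positive-dimensional subgroup, and these are exactly the varieties for which the Larsen--Pink bound is most interesting and hardest.

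The underlying issue is that intersections $\mathcal{V}\cap g\mathcal{V}$ can only \emph{decrease} dimension, so your scheme only pushes information ``downwards''. The paper's proof is organized quite differently: it fixes \emph{two} putative counterexamples, one of minimal dimension $\mathcal{V}^-$ and one of maximal dimension $\mathcal{V}^+$, and considers the \emph{product} variety $\mathcal{W}=\mathcal{V}^- a\mathcal{V}^+ a^{-1}$, whose dimension (for suitable $a\in A$, found via a transversality lemma exploiting simplicity of $\G$) is strictly larger than $\dim\mathcal{V}^+$. Counting $A$-points in the fibres of the product map then contradicts the maximality of $\mathcal{V}^+$. Moreover the argument is not a straight induction on $\dim\mathcal{V}$ but an iterative improvement of the exponent $\alpha$ in $|A\cap\mathcal{V}|\leq CK^C|A|^{\alpha\dim\mathcal{V}}$ from the trivial $\alpha=1$ down to $\alpha=1/\dim\G$ in steps of size $1/(\dim\G)^2$. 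Your intersection-based approach does not supply the ``upward'' half of this mechanism, which is exactly what is needed to handle subgroups and their cosets.
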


This is a strict generalization of $(\ref{lp})$, indeed we recover Proposition \ref{larsen-pink-prop} in the special case when $K=1$ (i.e. $A$ is a subgroup). The possibility of an extension to approximate groups of the Larsen-Pink estimate is an idea of Hrushovski, who proved a qualitative version of $(\ref{lpapp})$ in his ground-breaking paper on approximate groups \cite{hrushovski}. The polynomial dependence of the constant (in $CK^C$) is proved in \cite[Thm 4.1]{breuillard-green-tao-linear} using a variation of the argument we are about to present. Helfgott in \cite{helfgottSL3} proved a special case of this inequality when $\mathcal{V}=T$ is a maximal torus.

\bigskip

\begin{proof} We follow the Larsen-Pink strategy for proving Proposition \ref{larsen-pink-prop}, see \cite[Thm 4.2.]{larsen-pink}. Since a bound on complexity implies a bound on the number of irreducible components (see \cite[Appendix A]{breuillard-green-tao-linear}), it is enough to prove $(\ref{lpapp})$ for irreducible varieties. Clearly the estimate $(\ref{lpapp})$ holds when $\mathcal{V}$ has dimension $0$ or dimension $\dim \G$, so we may pick a possible counter-example to $(\ref{lpapp})$ of minimal positive dimension, say $\mathcal{V}^-$ and another one of minimal co-dimension, say $\mathcal{V}^+$. The basic idea of the proof, which relies crucially on the hypothesis that $\G$ is simple, is that we should be able to find $a \in A$ such that the product $\mathcal{W}:=\mathcal{V}^- a \mathcal{V}^{+} a^{-1}$ is a constructible set of dimension $> \dim \mathcal{V}^{+}$ and thus hopefully will contain too many elements of $AaAa^{-1}$. Hence, since $A$ is an approximate subgroup, some translate of $\mathcal{W}$ will contain too many elements of $A$, contradicting the choice of $\mathcal{V}^+$.

To effect this strategy rigorously, one cannot just proceed as outlined above, because $A \times A$ could concentrate of a singular subvariety of $\mathcal{V}^- \times \mathcal{V}^{+}$ made of non-generic fibers of the product map \begin{eqnarray}\label{maphi}
\Phi: \mathcal{V}^- \times \mathcal{V}^{+} \to \mathcal{W},\\ (x,y) \mapsto xaya^{-1}.
 \end{eqnarray}
 So instead we will prove first a weaker version of $(\ref{lpapp})$ in which  the exponent $\frac{1}{\dim \G}$ is replaced by some $\alpha \in [\frac{1}{\dim \G},1]$. And then improve that estimate by showing that, given any fixed $\beta \geq \frac{1}{\dim \G}$, if the bound $(\ref{lpalph})$ below holds for all subvarieties and for some $\alpha\leq \beta+\eps$, where $\eps=\frac{1}{(\dim \G)^2}$ , then it also holds for $\alpha=\beta$ and all subvarieties:
\begin{equation}\label{lpalph}
|A \cap \mathcal{V}| \leq O(K^{O(1)}) |A|^{\alpha \dim \mathcal{V}}.
\end{equation}
Since $(\ref{lpalph})$ holds obviously when $\alpha=1$ and all subvarieties and since if $(\ref{lpalph})$  holds for $\alpha=\alpha_0$, then it holds for all $\alpha\geq \alpha_0$, this will eventually prove that $(\ref{lpalph})$ holds for $\alpha=\frac{1}{\dim \G}$, so that $(\ref{lpapp})$ holds as desired.

Let us proceed as announced. We fix $\beta \geq \frac{1}{\dim \G}$ and assume that $(\ref{lpalph})$ holds for all $\alpha\geq \beta+\eps$. Pick irreducible subvarieties $\mathcal{V}^-$ and $\mathcal{V}^+$ as above of minimal and maximal dimension providing counter-examples to $(\ref{lpalph})$ for $\alpha=\beta$. This means that $|A \cap \mathcal{V}^+|$ is much bigger than $CK^C |A|^{\beta \dim \mathcal{V}^+}$ and similarly for $\mathcal{V}^-$. By Lemma \ref{transv} below, we may find $a \in A$ such that $\mathcal{W}:=\mathcal{V}^- a \mathcal{V}^{+} a^{-1}$ is a constructible set of dimension $> \dim \mathcal{V}^{+}$. Consider the product map $\Phi$ defined in $(\ref{maphi})$ above. Let $\mathcal{S} \leq \mathcal{V}^- \times \mathcal{V}^+$ be a singular subvariety of strictly smaller dimension outside of which each point lies on a fiber of the right dimension namely $d:= \dim \mathcal{V}^-  + \dim  \mathcal{V}^{+} - \dim \mathcal{W}$. By assumption $d < \dim \mathcal{V}^-$. Basic algebraic geometry (cf. \cite[I.6.3]{shafarevich}) tells us that $\mathcal{S}$ and the fibers are closed algebraic subvarieties, and it is possible to prove by abstract nonsense (see \cite[Appendix A]{breuillard-green-tao-linear}) that their complexity is bounded in terms of those of $\mathcal{V}^{ \pm}$ alone.

Then we see that $A \times A$ must concentrate on $\mathcal{S}$, i.e. $|(A \times A) \cap \mathcal{S}| > \frac{1}{2}|(A \times A) \cap (\mathcal{V}^- \times \mathcal{V}^+)|$, since otherwise, decomposing $(\mathcal{V}^- \times \mathcal{V}^+) \setminus \mathcal{S}$ into fibers of $\Phi$ we would get:

\begin{align*}
CK^C |A|^{\beta(\dim \mathcal{V}^+ + \dim \mathcal{V}^-)} &\ll \frac{1}{2}|A \cap \mathcal{V}^-| \cdot |A \cap \mathcal{V}^+|\\ &\leq  \sum_{z \in \mathcal{W} \cap \Phi(A \times A)} |\Phi^{-1}(z) \cap (A \times A)|\\ &\ll |\mathcal{W} \cap A^4| |A|^{\beta d}
\end{align*}
implying that some translate of $\mathcal{W}$ intersects $A$ in a subset of size much larger than $O(K^{O(1)}) |A|^{\beta \dim \mathcal{W}}$ and thus contradicting the choice (maximality) of $\mathcal{V}^+$. It was licit to bound $|\Phi^{-1}(z) \cap (A \times A)|$ as we did above because of the minimality of $\dim \mathcal{V}^-$ and the fact that $d=\dim \Phi^{-1}(z) < \dim \mathcal{V}^-$.

So we are reduced to the case when $A \times A$ concentrates on the singular subvariety $\mathcal{S}$, which is of dimension at most $\dim \mathcal{V}^- +  \dim \mathcal{V}^+ - 1$. Passing to a proper subvariety of smaller dimension if necessary, we may assume that $\mathcal{S}$ is a subvariety of smallest possible dimension on which $A \times A$ concentrates (i.e. $ |A \cap \mathcal{V}^-| \cdot |A \cap \mathcal{V}^+| \ll O(1) |(A \times A) \cap \mathcal{S}|$).  If its projection to the second factor $\mathcal{V}^+$ is contained in a proper closed subvariety of $\mathcal{V}^+$, then we use $(\ref{lpalph})$ for $\beta+\eps$, to write
$$|(A \times A) \cap \mathcal{S}| \leq O(K^{O(1)}) |A \cap \mathcal{V}^-| \cdot |A|^{(\beta+\eps) (\dim \mathcal{V}^+-1)},$$
which is a contradiction since $(\beta+\eps)(\dim \mathcal{V}^+ -1) < \beta \dim \mathcal{V}^+$. So we may assume that the projection of $\mathcal{S}\leq \mathcal{V}^- \times \mathcal{V}^+$ to the second factor $\mathcal{V}^+$ contains an open dense set of $\mathcal{V}^+$, i.e. the projection is dominant, and hence away from a proper closed subvariety $\mathcal{S}_0$ of $\mathcal{S}$ (on which $A \times A$ cannot concentrate by minimality of $\mathcal{S}$) the fibers of this projection have dimension at most $\dim \mathcal{V}^- -1$. Hence:

\begin{align*}
|(A \times A) \cap \mathcal{S}| &\leq O(1) |(A \times A) \cap \mathcal{S}\setminus \mathcal{S}_0| \\&\leq O(1) \sum_{a \in A \cap \mathcal{V}^+} |(A \times \{a\}) \cap \mathcal{S}\setminus \mathcal{S}_0| \\ &\leq
O(K^{O(1)}) |A \cap \mathcal{V}^+| \cdot |A|^{\beta ( \dim \mathcal{V}^- -1)},
\end{align*}
which is again contradictory. This establishes that $(\ref{lpalph})$ holds for $\alpha=\beta$ and thus by induction that $(\ref{lpapp})$ holds unconditionally.
\end{proof}

\begin{remark}\label{explicitconstant} A careful analysis of the above argument shows that the exponent of $K$ in $(\ref{lpapp})$ can be taken to be $3 \dim \G$, while the multiplicative constant $C$, depends on the complexity of $\mathcal{V}$, and is less explicit owing to the less explicit nature of our notion of complexity and the way it bounds the number of irreducible components (as proved in \cite[Appendix A]{breuillard-green-tao-linear} using ultraproducts). Similarly the threshold of ``sufficient Zariski-density'' of $A$ is non explicit.
\end{remark}


Let $M\geq 2$ and $\G$ as above an absolutely simple connected algebraic group $\G$ of complexity at most $M$. In the above proof, we made use of the following lemma.

\begin{lemma}(Finding a transverse conjugate)\label{transv} There is $C=C(M)>0$ such that the following holds. If $A$ is a $C$-sufficiently Zariski-dense finite subset of $\G$ of complexity at most $M$, then for any two closed algebraic subvarieties $\mathcal{V}_1,\mathcal{V}_2$ in $\G$ of complexity at most $M$ and positive dimension and co-dimension, there is $a \in A$ such that the constructible set $\mathcal{V}_1a\mathcal{V}_2a^{-1}$ has dimension strictly bigger than $\dim \mathcal{V}_2$ and complexity $O_M(1)$ (i.e. a constant depending on $M$ only).
\end{lemma}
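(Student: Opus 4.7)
The plan is to define the ``bad set''
\[
F := \{a \in \G \,:\, \dim(\mathcal{V}_1\, a\, \mathcal{V}_2\, a^{-1}) \leq \dim \mathcal{V}_2 \} \subseteq \G,
\]
and show that it is a \emph{proper} closed subvariety of $\G$ of complexity $O_M(1)$. Granting this, choosing the constant $C = C(M)$ larger than this complexity bound ensures that the $C$-sufficiently Zariski-dense subset $A$ cannot be contained in $F$, and therefore contains some $a$ with $\dim(\mathcal{V}_1\, a\, \mathcal{V}_2\, a^{-1}) > \dim \mathcal{V}_2$. The claimed $O_M(1)$ bound on the complexity of $\mathcal{V}_1 a \mathcal{V}_2 a^{-1}$ itself is automatic, as it is the image of $\mathcal{V}_1 \times \mathcal{V}_2$ under a polynomial map of degree $O_M(1)$, so the complexity calculus of \cite[Appendix A]{breuillard-green-tao-linear} applies directly.

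Closedness of $F$ is a consequence of the lower semi-continuity of image dimension (Chevalley), and the complexity bound on $F$ follows by applying the same complexity calculus to the incidence variety $\{(z,a) \in \G\times \G : z \in \mathcal{V}_1 a \mathcal{V}_2 a^{-1}\}$. The real content is proving $F \neq \G$, and this is where the absolute simplicity of $\G$ enters. Suppose for contradiction that $F = \G$. Decompose $\mathcal{V}_1 = \bigcup_i \mathcal{V}_1^{(i)}$ and $\mathcal{V}_2 = \bigcup_j \mathcal{V}_2^{(j)}$ into irreducible components, and fix a positive-dimensional component $\mathcal{V}_1^{(i_0)}$ together with a top-dimensional component $\mathcal{V}_2^{(j_0)}$ (so $\dim \mathcal{V}_2^{(j_0)} = \dim \mathcal{V}_2 < \dim \G$). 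For every $a \in \G$, the constructible set $\mathcal{V}_1^{(i_0)} \cdot a \mathcal{V}_2^{(j_0)} a^{-1}$ is irreducible, has dimension at least $\dim \mathcal{V}_2^{(j_0)}$ (since it contains the translate $x \cdot a\mathcal{V}_2^{(j_0)}a^{-1}$ for any $x$), and at most $\dim \mathcal{V}_2$ by the standing assumption; hence equality. By irreducibility and equality of dimensions, the Zariski closure $\overline{\mathcal{V}_1^{(i_0)} \cdot a \mathcal{V}_2^{(j_0)} a^{-1}}$ coincides with $x \cdot a\mathcal{V}_2^{(j_0)} a^{-1}$ for every $x \in \mathcal{V}_1^{(i_0)}$. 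Consequently, for any $x, x' \in \mathcal{V}_1^{(i_0)}$, one has $(x')^{-1} x \in a H_0 a^{-1}$, where $H_0 := \{g \in \G : g \mathcal{V}_2^{(j_0)} = \mathcal{V}_2^{(j_0)}\}$ is the closed algebraic left stabilizer of $\mathcal{V}_2^{(j_0)}$.

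Fixing a base point $x_0 \in \mathcal{V}_1^{(i_0)}$ and letting $a$ range over $\G$, we deduce
\[
x_0^{-1} \mathcal{V}_1^{(i_0)} \;\subseteq\; \bigcap_{a \in \G} a H_0 a^{-1} \;=:\; N,
\]
a closed normal algebraic subgroup of $\G$. By absolute simplicity, $N$ is either finite, forcing $\dim \mathcal{V}_1^{(i_0)} = 0$ and contradicting our choice of $i_0$, or else $N = \G$, forcing $H_0 = \G$ and hence $\mathcal{V}_2^{(j_0)} = \G$, contradicting $\dim \mathcal{V}_2 < \dim \G$. In either case we reach a contradiction, so $F \subsetneq \G$, as required. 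The main obstacle will be the quantitative control on the complexity of $F$; the properness itself is conceptually clean once simplicity is invoked, and if effective constants are not needed the whole proof reduces to an ultraproduct compactness argument modelled on the proof of Lemma \ref{densep}.
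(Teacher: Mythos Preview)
Your argument is correct and reaches the same conclusion via the same core idea (the stabilizer $H_0$ of $\mathcal{V}_2^{(j_0)}$ is proper, and simplicity forces $\bigcap_a aH_0a^{-1}$ to be finite), but the packaging is genuinely different from the paper's.

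You encode the obstruction as a single ``bad locus'' $F \subseteq \G$, argue it is a proper closed subvariety of complexity $O_M(1)$, and then invoke sufficient Zariski-density of $A$ \emph{once} to find $a \in A \setminus F$. The paper instead never defines $F$: it assumes directly that no $a \in A$ works, obtains $\mathcal{V}_1^{-1}\mathcal{V}_1 \subseteq \bigcap_{a \in A} a\HH a^{-1}$, and then runs an \emph{iterative} argument---at each step producing a new $a_i \in A$ (using sufficient Zariski-density afresh) that drops the dimension of $\HH \cap a_1\HH a_1^{-1}\cap \cdots$, terminating after at most $\dim T$ steps. Your route is conceptually cleaner and uses the density hypothesis only once, but it leans on the complexity calculus (or an ultraproduct compactness argument) to control $F$; the paper's route avoids that black box at the price of the inductive bookkeeping.

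One small point: the closedness of $F$ is not quite an immediate corollary of Chevalley, which gives upper semi-continuity of fibre dimension on the \emph{source}, not lower semi-continuity of image dimension in the parameter $a$. What is clear is that $F$ is constructible of complexity $O_M(1)$. Your properness argument still goes through: the condition $x_0^{-1}\mathcal{V}_1^{(i_0)} \subseteq aH_0a^{-1}$ is \emph{closed} in $a$, so if it held on a Zariski-dense constructible subset it would hold on all of $\G$, and you reach the same contradiction. Hence $F$ is not Zariski-dense, $\overline{F}$ is proper of complexity $O_M(1)$, and $A \not\subseteq \overline{F}$ as needed.
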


\begin{proof} We may assume both varieties to be irreducible. If no such $a$ can be found, then for every $x_1 \in \mathcal{V}_1$ the closed irreducible subvarieties $x_1a\mathcal{V}_2$ and the closure of $\mathcal{V}_1a\mathcal{V}_2$ have same dimension, hence are equal. This means that $x_1a\mathcal{V}_2a^{-1} = x_1'a\mathcal{V}_2a^{-1}$ for all $x_1,x_1' \in \mathcal{V}_1$. Hence that $x_1^{-1}x_1'$ lies in the stabilizer in $\G$ of the subvariety $a\mathcal{V}_2a^{-1}$, namely  $\mathcal{V}_1^{-1}\mathcal{V}_1$ lies in $a\HH a^{-1}$, where $\HH$ is the closed algebraic subgroup $\{g \in \G; g\mathcal{V}_2=\mathcal{V}_2\}$. Since $\mathcal{V}_2$ is a proper subvariety, $\HH$ is a proper subgroup, and since $\G$ is simple $\cap_{a \in \G} a\HH a^{-1}$ is finite. We claim that, because $A$ is assumed sufficiently Zariski-dense, $\cap_{a \in A} a\HH a^{-1}$ is finite too; this will contradict the assumption that $\mathcal{V}_1$ has positive dimension and prove the lemma.

To see the claim, observe that if $\mathbf{Y} \leq \HH$ is an algebraic subvariety of complexity at most $M'$, then $\{g \in \G, \mathbf{Y} \leq g\HH g^{-1}\}$ is a subvariety of complexity at most $O_{M'}(1)$. 
So if $M'=O_M(1)$, then there will be $a \in A$ outside it. Applying this remark several times to each of the irreducible components $\mathbf{Y}$ of the intersections $\HH \cap a_1 \HH a_1^{-1} \cap \ldots \cap a_i \HH a_i^{-1}$, $i\leq k$, we can build $k=O_M(1)$ elements $a_i \in A$ such that $\cap_{1 \leq i \leq k} a_i \HH a_i^{-1}$ has dimension $0$.
\end{proof}

Having the Larsen-Pink estimate for approximate groups (Proposition \ref{lp-prop-app}) at our disposal, we are ready to prove our main theorem. So we now pass to the proof of Theorem \ref{class}. For this it will be convenient to make the following definition:

\bigskip

\noindent \underline{Definition:} A maximal torus $T$ of $\G$ will be called an \emph{involved torus} if $A^2 \cap T$ contains at least one regular element.

\bigskip

Recall that a maximal torus is a connected closed algebraic subgroup of $\G$ containing only semisimple elements (i.e. elements that are diagonalizable in some hence any embedding of $\G$ in $\GL_d$) and maximal for this property. Maximal tori are all conjugate. We refer to Borel's book \cite{borel} or Humphreys \cite{humphreys} for background on algebraic groups. A semisimple element is called regular if its centralizer has a maximal torus of finite index. Regular semisimple elements form a Zariski open subset of $\G$. In particular, since the approximate group $A$ is assumed to be sufficiently Zariski-dense in Theorem \ref{class}, we see that $A$ contains a regular semisimple element. We also recall that every maximal torus $T$ is of bounded index in its normalizer $N(T)$.

We observe at the outset that the number of involved tori is finite, indeed of size at most $|A^2|$, because a regular semisimple element can be contained in at most one maximal torus (the connected component of its centralizer). As in the Larsen-Pink theorem, we set $q_A:=|A|^{1/\dim \G}$. We need to prove that either $q_A$ is $O(K^{O(1)})$, or $\langle A \rangle$ is finite and  $q_A/q_{\langle A \rangle}$ is $O(K^{O(1)})$.

\bigskip
\noindent {\bf Claim 1.} If $T$ is an involved maximal torus, then
\begin{equation}\label{centA}
1/O(K^{O(1)}) q_A^{\dim T} \leq |T \cap A^2| \leq O(K^{O(1)}) q_A^{\dim T}.
\end{equation}

\bigskip

\noindent \emph{Proof:} The argument is the same as the one used to prove $(\ref{cent})$ above applying the Larsen-Pink inequality to both the centralizer and the conjugacy class, and yields the desired estimate for the centralizer $Z(a_0)$ of a regular semisimple $a_0 \in A^2 \cap T$ instead of $T$. Namely looking at the fibers of the map $A \to A^3 \cap \mathcal{V}_{a_0}$, $a \mapsto aa_0a^{-1}$, where $\mathcal{V}_{a_0}$ is the conjugacy class of $a_0$ in $\G$, we see that
$$|A| \leq |A^2 \cap Z(a_0)| \cdot |A^3 \cap \mathcal{V}_0|,$$
but each factor in the right handside is at most $O(K^{O(1)}) q_A^{\dim Z(a_0)}$ and $O(K^{O(1)}) q_A^{\dim \G - \dim Z(a_0)}$ respectively, so the product is $O(K^{O(1)}) |A|$. We thus obtain $(\ref{centA})$ with $Z(a_0)$ in place of $T$. But $Z(a_0)$ is an algebraic subgroup with bounded complexity and $T$ is its connected component, hence $T$ has bounded index in $Z(a)$. This easily implies that $|A^2 \cap T| \geq |A^2 \cap Z(a_0)|/O(K^{O(1)})$ (indeed $A$ will intersect some translate of $Z(a_0)$ in a set of size $\geq q_A^{\dim T}/ O(K^{O(1)})$, hence also some translate of $T$ in a comparable size). This establishes $(\ref{centA})$.

\bigskip

Claim 1 above is really the beef of the proof: assuming only that $T \cap A^2$ has one regular element, we get that it has at least $q_A^{\dim T}$ regular elements up to a $O(K^{O(1)})$ factor. Indeed, the non-regular elements in $T$ are concentrated on a bounded union of proper algebraic subgroups of bounded complexity (the subtori corresponding to the vanishing of some root: in $\SL_n$ this corresponds to the subgroups of diagonal matrices having at least one double eigenvalue). So applying the Larsen-Pink inequality $(\ref{lpapp})$ to this bounded union $T_{sing}$ of subtori, we see that $|A^2 \cap T_{sing}| \leq  O(K^{O(1)}) q_A^{\dim T -1} $. This means that there are at least $q_A^{\dim T} / O(K^{O(1)})$ elements in $A^2 \cap T$ lying outside of $T_{sing}$.

\bigskip
\noindent {\bf Claim 2.}  Unless $q_A$ is $O(K^{O(1)})$, for every maximal torus $T$ of $\G$, if $T$ is involved in $A$, so is $aTa^{-1}$ for every $a \in A$.

\bigskip

\noindent \emph{Proof:} This follows easily from Claim 1 and the above remark. Note that $|A^2 \cap aTa^{-1}|= |a^{-1}A^2a \cap T|$. However $aA^2a^{-1}$ being contained in $A^4$ must lie in at most $K^3$ left translates of $A$. Hence $A^2$ is contained in at most $K^3$ left translates of $a^{-1}Aa$. This means that one of these translates must intersect $T$ in a set of size at least $q_A^{\dim T} / O(K^{O(1)})$. Hence $|a^{-1}A^2a \cap T| \geq q_A^{\dim T} / O(K^{O(1)})$, which implies by the remark above, that $a^{-1}A^2a$ contains a regular semisimple element of $T$, unless $q_A  \leq O(K^{O(1)})$. This proves the claim.

\bigskip

Obviously this lemma implies that all conjugates $gTg^{-1}$, $g \in \langle A \rangle$, are involved.

\bigskip
\noindent {\bf Claim 3.}  Unless $q_A$ is $O(K^{O(1)})$, $\langle A \rangle$ is finite.

\bigskip

\noindent \emph{Proof:} As remarked earlier, since $A$ is sufficiently Zariski-dense, it must contain a regular semisimple element, so there is at least one involved torus.  Since every regular semisimple element is contained in at most one torus, there are only finitely many involved tori. By Claim 2, unless $q_A=O(K^{O(1)})$, they are permuted under conjugation by $\langle A \rangle$. In particular the Zariski-closure $\HH$ of $\langle A \rangle$ intersects the normalizer $N(T)$, and hence $T$ itself in a subgroup of finite index.  We claim that if $\langle A \rangle$ is infinite, and hence the connected component of the identity $\HH^0$ has positive dimension, then there is a closed connected algebraic subgroup $S \leq T$ of bounded complexity and containing $\HH^0$ such that $\HH \leq N(S)$. This will yield the desired contradiction, because $N(S)$ has then bounded complexity. Starting with $S=T$ observe that if $\HH$ does not normalize $S$, then there is $h \in \HH$ such that $S \cap hSh^{-1}$ has dimension $< \dim S$ and bounded complexity. Hence so does the connected component $S_1$ of $S \cap hSh^{-1}$. Since $\HH^0$ is normalized by $h$, this $S_1$ also contains $\HH^0$. Reiterate with $S:=S_1$. This process ends after at most $\dim T$ steps and the claim follows.
\bigskip

The proof of Theorem \ref{class} now follows in a few lines from Claims 1 and 3 and the Larsen-Pink inequality by counting the number $\mathcal{T}$ of involved tori. Since every regular semisimple element is contained in at most one maximal torus, Claim 1 implies that
$$\mathcal{T} \leq O(K^{O(1)})|A^2|/q_A^{\dim T} \leq O(K^{O(1)}) q_A^{\dim \G - \dim T} $$
On the other hand, the subgroup $\langle A \rangle$ acts by conjugation on the (finite) set of involved tori by Claim 2. So
$$\mathcal{T} \geq |\langle A \rangle|/|\langle A \rangle \cap N(T)| \geq |\langle A \rangle|/ O(1) q_{\langle A \rangle}^{\dim T} = q_{\langle A \rangle}^{\dim \G - \dim T}/O(1),$$ where the second inequality follows from the original Larsen-Pink inequality  (Proposition \ref{larsen-pink-prop}) applied to the sufficiently Zariski-dense subgroup $\langle A \rangle$.
So $q_A/q_{\langle A \rangle} = O(K^{O(1)})$ as desired. Finally note that the Larsen-Pink estimate was used only for subvarieties (tori, conjugacy classes, etc.) whose complexity is bounded in terms of the complexity of $\G$ only. Hence the threshold of sufficient Zariski density required in these applications of $(\ref{lpapp})$ is uniform. This ends the proof of Theorem \ref{class}.

\subsection{Verifying Assumption (ii) of the Bourgain-Gamburd machine}\label{verif}

Suppose $G_0=\G(\F_q)$, where $\G$ is an absolutely simple algebraic group defined over the finite field $\F_q$. Then Corollary \ref{corfinite} proved in the previous subsection implies that Assumption (ii) of the Bourgain-Gamburd machine (i.e. Proposition \ref{machine}) holds for $G_0$ with a function $\delta(\eps)$ given by $\delta(\eps)=\eps\min\{\beta,1/(C+1)\}$, where $C$ is the constant from Corollary \ref{corfinite} (distinguishing the cases $H=1$ and $H\neq 1$ and using Remark \ref{nosmallindex}).

In order to deal with products of a bounded number of quasi-simple groups of Lie type of bounded rank, one needs the following rather straightforward extension of Theorem \ref{product}, based on Goursat's lemma about subgroups of direct products of groups.

\begin{theorem}[Approximate subgroups of semisimple groups]\label{semisimpleproduct} Let $G$ be an (almost direct) product of finite simple (or quasisimple) groups of Lie type and suppose that $A$ a $K$-approximate subgroup of $G$. Then either $|A| \geq |G|/K^C$, or $A$ is contained in at most $K^C$ left cosets of a proper subgroup $H$ of $G$, where $C>0$ is a constant depending only on the rank of $G$.
\end{theorem}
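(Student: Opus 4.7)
The plan is to proceed by induction on the number $n$ of simple/quasisimple factors in the almost direct decomposition $G = G_1 \cdots G_n$. The base case $n=1$ is Corollary \ref{corfinite}, where the option $|A|\le K^C$ is recast as ``$A$ is contained in at most $K^C$ cosets of the trivial subgroup $\{1\}$''.

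For the inductive step, consider the projection $\pi := \pi_n \colon G \to G_n$. Then $\pi(A)$ is a $K$-approximate subgroup of the quasisimple group $G_n$, and Corollary \ref{corfinite} yields three cases. If $\pi(A)$ is contained in a proper subgroup $H_n\lneq G_n$, then $A\subset \pi^{-1}(H_n)$, a single coset of a proper subgroup of $G$; if $|\pi(A)|\le K^C$, then $A$ lies in at most $K^C$ cosets of $\ker\pi\lneq G$. So one may assume $|\pi(A)|\ge |G_n|/K^C$, in which case the Nikolov--Pyber/quasirandomness argument used in the proof of Corollary \ref{product} gives $\pi(A^3)=G_n$. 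Now introduce $B := A^2\cap \ker\pi \subset G_1\cdots G_{n-1}$. A standard Ruzsa-covering computation (using $A^4\subset X^3 A$ with $|X|\le K$) shows that $B$ is a $K^{O(1)}$-approximate subgroup of $\ker\pi$, and a fiber averaging argument yields $|B|\ge |A|/|\pi(A)|$.

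Apply the inductive hypothesis to $B$ in $\ker\pi$. If $|B|\ge |\ker\pi|/K^{C'}$, then the same Nikolov--Pyber argument applied to $B$ gives $B^3=\ker\pi$; combined with $\pi(A^3)=G_n$, this forces $A^9=G$ (for $g\in G$, pick $a\in A^3$ with $\pi(a)=\pi(g)$, so $ga^{-1}\in\ker\pi = B^3\subset A^6$), hence $|A|\ge |G|/K^8$. The only remaining possibility is that $B$ lies in $\le K^{C'}$ cosets of a proper subgroup $H'\lneq \ker\pi$. Here each fiber $F_g := A\cap \pi^{-1}(g)$ satisfies $a_g^{-1}F_g\subset A^2\cap \ker\pi = B$ for any $a_g\in F_g$, so each $F_g$ is covered by $\le K^{C'}$ translates of $H'$. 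To promote this fiberwise containment into a containment of $A$ in few cosets of a proper subgroup of $G$ (not just of $\ker\pi$), one invokes Goursat's lemma applied to $L:=\langle A\rangle$. Since each $\pi_i(L)=G_i$ (otherwise the analogous analysis for the projection $\pi_i$ would have finished the proof), Goursat describes $L$ as determined by a system of isomorphisms between simple quotients of the various $G_i$'s; either $L$ is a proper ``twisted graph'' subgroup of $G$, in which case $A\subset L$ gives the desired single coset, or $L=G$, in which case the few-cosets containment of $B$ forces $H'$ to have index bounded by $K^{O(1)}$ in $\ker\pi$, returning us to the large-$|B|$ regime already handled.

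The principal obstacle is this final Goursat step: converting a few-cosets containment of $B$ inside $\ker\pi$ into a corresponding containment of $A$ inside $G$ itself. The classification of subgroups of direct products of simple groups via Goursat's lemma is precisely what makes this conversion possible, with the twisted diagonal subgroups being the natural receptacles for $A$ in the exceptional case. The remainder of the argument is a routine packaging of Corollary \ref{corfinite}, a Ruzsa-type lemma on intersections of approximate groups with normal subgroups, and the Nikolov--Pyber fact that a sufficiently large approximate subgroup of a quasisimple group has trivial triple product.
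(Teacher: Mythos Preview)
The overall architecture of your argument---induction on the number of factors, the single-factor case being Corollary~\ref{corfinite}, the analysis of $B=A^2\cap\ker\pi$ as a $K^{O(1)}$-approximate subgroup via Ruzsa-type fiber counting, and the use of Nikolov--Pyber to conclude $A^{O(1)}=G$ when both $\pi(A)$ and $B$ are large---matches the route taken in \cite[Theorem~8.1]{bggt}, to which the paper defers, and your identification of Goursat's lemma as the structural input is exactly right.

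However, the final step has a genuine gap. In case~(ii-b), where $\langle A\rangle=G$ and $B$ lies in at most $K^{C'}$ cosets of a proper $H'\lneq N$, you assert that this ``forces $H'$ to have index bounded by $K^{O(1)}$ in $\ker\pi$, returning us to the large-$|B|$ regime.'' Neither clause is justified. First, $\langle A\rangle=G$ does not by itself bound $[N:H']$: nothing prevents $B$ from being genuinely tiny (for instance contained in $K^{O(1)}$ cosets of the trivial subgroup), and the fact that $\bigcup_k(A^k\cap N)=N$ gives no control on $k$. Second, even granting $[N:H']\leqslant K^{O(1)}$, this says $H'$ is large, not that $|B|$ is large; $B$ could still be a single point sitting inside a huge $H'$. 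So the purported reduction to case~(ii-a) does not go through.

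What is actually needed in this case is more delicate. One has to exploit that conjugation by $A$ moves $B$ around inside $A^4\cap N$ (which has size $\leqslant K^{O(1)}|B|$), together with the fact that $\pi(A^3)=G_n$ means these conjugates see ``all of $G_n$-worth'' of twisting. This either forces the approximate group $A$ to be trapped near a graph subgroup coming from an isomorphism between factors (which is the proper $H\leqslant G$ you want, and which is \emph{not} obtained from $H'$), or else forces the projections of $B$ to each remaining factor $G_i$ to be large enough to run Nikolov--Pyber there. The Goursat input is used to organize this dichotomy, but it does not collapse the case analysis in the one-line way you suggest. In short: the strategy is correct, but the endgame needs a real argument, not the unproved implication you have written.
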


We refer the reader to  \cite[Theorem 8.1.]{bggt} for a detailed proof.

If $\G$ is a semisimple algebraic group defined over $\Q$ its reduction $\G_p$ modulo $p$ is well-defined for all but finitely many primes $p$. When $\G$ is simply connected, then $\G_p(\F_p)$ is an almost direct product of quasi-simple groups of Lie type over $\F_q$, where $q$ is a bounded power of $p$. It then follows from Remark \ref{nosmallindex} that every proper subgroup $H$ of a quotient $G$ of $\G_p(\F_p)$ has index at least $|G|^{\eta}$ in $G$ for some $\eta=\eta(\G)>0$ independent of $p$ and of the quotient $G$. We may then take as above $\delta(\eps)=\min\{\eta,1/(C+1)\} \eps$, where $C\geq 1$ is the constant in the above proposition and apply this proposition to $K=|G|^{\delta}$ to obtain Assumption (ii) of the Bourgain-Gamburd machine (Proposition \ref{machine}).

More generally we can handle a bounded number of quasi-simple factors. Namely if $G$ is a (almost direct) product of at most $r$ quasi-simple groups of Lie type of dimension at most $d$ (so for instance if $G$ is the reduction modulo $q:=p_1\cdot \ldots \cdot p_r$, for some distinct large primes $p_1,\ldots,p_r$ of some Zariski-dense subgroup of $\G(\Q)$), then Assumption (ii) of the Bourgain-Gamburd machine is still satisfied with say  $\delta:=\min\{\eta(d)/2r,1/(2C)\}\eps$. Here $\eta(d)>0$ denotes the constant of quasi-randomness (see Remark \ref{nosmallindex}) such that every proper subgroup of a quasi-simple group $S$ of Lie type of dimension at most $d$ has index at least $|S|^\eta$, $r$ is the number of quasi-simple factors of $G$ and $C$ is the constant from Theorem \ref{semisimpleproduct}.

To verify that these constants indeed work, split $G$ as a product $G_1G_2$, where $G_1$ is the product of the quasi-simple factors of size at most $|G|^{\eps/2r}$. Given a $|G|^\delta$-approximate subgroup $A$ of $G$, apply Theorem \ref{semisimpleproduct} to $\pi_2(A)$, the projection of $A$ to $G_2$. Then either $|\pi_2(A)|\geq |G_2|/|G|^{C\delta}$, in which case $|A| \geq |\pi_2(A)| \geq |G|/(|G_1||G|^{C \delta}) \geq |G|^{1-\eps}$, because $|G_1|\leq |G|^{\eps/2}$ and $C\delta \leq \eps/2$. Or $\pi_2(A)$ is covered by at most $|G|^{C\delta}$ translates of a proper subgroup of $G_2$. However proper subgroups of $G_2$ have index at least $|S|^{\eta}$, where $S$ is a quasi-simple factor of $G_2$, hence have index at least $|G|^{\eta \eps/2r}$. It follows that $A$ itself is covered by at most $|G|^{C \delta} \leq [G:H]^\eps/|G|^\delta$ translates of a proper subgroup $H$ of $G$. We are done.

To summarize the above discussion, we have proved in particular:

\begin{corollary} If $\G$ is a semisimple simply connected algebraic group defined over $\Q$ and $p_1,\ldots,p_r$ distinct large enough primes, then Assumption (ii) of Proposition \ref{machine} holds for $G_0:=\prod_{i=1}^r\G_{p_i}(\F_{p_i})$ with $\delta=\eps/Dr$, for some constant $D>0$ depending only on the dimension of the algebraic group $\G$ and not on the $p_i$'s.
\end{corollary}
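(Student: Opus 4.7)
The plan is to combine Theorem \ref{semisimpleproduct} on approximate subgroups of products of quasi-simple groups of Lie type with the quasirandomness bound recorded in Remark \ref{nosmallindex}. First I would recall that, because $\G$ is simply connected and each $p_i$ is sufficiently large, each factor $\G_{p_i}(\F_{p_i})$ is an almost direct product of boundedly many quasi-simple groups of Lie type over a bounded power of $\F_{p_i}$, each of dimension at most $\dim \G$. Consequently $G_0$ is itself an almost direct product of at most $r' = O(r)$ quasi-simple factors $S_1, \ldots, S_{r'}$ of bounded rank, and I can (ignoring the bounded center, which only affects constants) identify $G_0$ with this product.

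The key idea, made explicit in the paragraph preceding the corollary, is a two-scale splitting of the factors. I would fix the threshold $|G_0|^{\eps/(2r)}$, let $G_1$ be the product of those $S_i$ with $|S_i| \leq |G_0|^{\eps/(2r)}$, and $G_2$ the product of the remaining factors. Then $|G_1| \leq |G_0|^{\eps/2}$ (there are at most $r'$ such factors), while the factors of $G_2$ are each of size at least $|G_0|^{\eps/(2r)}$. Given a $|G_0|^\delta$-approximate subgroup $A \subseteq G_0$, I would pass to the projection $\pi_2(A) \subseteq G_2$, which is again a $|G_0|^\delta$-approximate subgroup, and invoke Theorem \ref{semisimpleproduct} on it.

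Theorem \ref{semisimpleproduct} then offers two alternatives. Either $|\pi_2(A)| \geq |G_2|/|G_0|^{C\delta}$, in which case
$$|A| \geq |\pi_2(A)| \geq \frac{|G_0|}{|G_1|\,|G_0|^{C\delta}} \geq |G_0|^{1-\eps}$$
provided $\eps/2 + C\delta \leq \eps$, i.e. $\delta \leq \eps/(2C)$; this is the first alternative of Assumption (ii) of Proposition \ref{machine}. Otherwise $\pi_2(A)$ lies in at most $|G_0|^{C\delta}$ cosets of a proper subgroup $H_2 \leq G_2$, so $A$ lies in as many cosets of $H := \pi_2^{-1}(H_2) \leq G_0$. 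By Remark \ref{nosmallindex}, every proper subgroup of a quasi-simple group of Lie type of bounded rank has index at least $|S|^{\eta}$ for some $\eta = \eta(\dim \G)>0$; since at least one factor of $G_2$ of size $\geq |G_0|^{\eps/(2r)}$ must occur nontrivially in the quotient $G_2/H_2$, we obtain $[G_0:H] \geq |G_0|^{\eta\eps/(2r)}$. To meet the second alternative of Assumption (ii), I need $|G_0|^{C\delta} \leq [G_0:H]^\eps/|G_0|^\delta$, which, combined with $\delta \leq \eps/(2C)$, is secured by taking $\delta = \eps/(Dr)$ with $D$ a sufficiently large constant depending only on $\dim \G$ through $\eta$ and $C$.

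The main obstacle is nothing deep; it is bookkeeping. One must check that passing to the almost direct product (with its bounded center) and to the projection $\pi_2$ preserves the $K$-approximate subgroup property up to constants absorbed into $D$. Both points are routine: the bounded center only scales sizes by an $O(1)$ factor, and $\pi_2(AA) \subseteq \pi_2(X)\pi_2(A)$ with $|\pi_2(X)| \leq |X|$ follows directly from the definition. All other steps reduce to the application of the two cited black boxes, Theorem \ref{semisimpleproduct} and Remark \ref{nosmallindex}, and their constants are absolute in the claimed sense.
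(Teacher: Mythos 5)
Your argument is essentially identical to the paper's own verification: the same splitting of $G_0$ into a product $G_1G_2$ of small and large quasi-simple factors at the threshold $|G_0|^{\eps/2r}$, the same application of Theorem \ref{semisimpleproduct} to the projection $\pi_2(A)$, and the same appeal to Remark \ref{nosmallindex} to lower-bound the index of the resulting proper subgroup. (The one caveat, which you share with the paper, is that the closing inequality $|G_0|^{(C+1)\delta}\leq [G_0:H]^{\eps}$ actually forces $\delta$ of order $\eps^{2}/r$ when $[G_0:H]$ is as small as $|G_0|^{\eta\eps/2r}$; this is harmless since $\eps$ is fixed in the eventual application, and it is not a defect of your write-up relative to the source.)
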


\section{Random matrix products}\label{rmtsec}

The theory of random matrix products is a well developed part of probability theory on groups. It aims at understanding the statistical behavior of products of $n$ matrices chosen at random when $n$ tends to infinity. It is customary to restrict attention to the case when the matrices are independent and chosen according to the same probability distribution.

In order to establish the non-concentration estimate in the Bourgain-Gamburd machine (i.e. Assumption (iii) in Proposition \ref{machine}) we will need the following result:

\begin{theorem}(probability of return to a subgroup \cite{breuillard-large-deviations}) \label{deviation}Let $\G$ be a connected semisimple algebraic group over a field $K$ of characteristic zero and $\Gamma \leq \G(K)$ a Zariski-dense subgroup generated by a finite set $S$. Let $\mu$ be a probability measure on $S$ with $\mu(s)>0$ for each $s \in S$. Then there is a positive constant $c>0$ such that for every integer $n \geq 1$,
$$\mu^n(\HH)<e^{-cn},$$
uniformly for every proper closed algebraic subgroup $\HH$ of $\G$.
\end{theorem}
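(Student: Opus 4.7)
The plan is to combine the uniform Chevalley-type reduction already used in the proof of strong approximation (see Lemma~\ref{dense}) with the large deviation principle for random matrix products from \cite{breuillard-large-deviations}. First, Lemma~\ref{dense} (applied over the algebraic closure of $K$) provides a finite list of nontrivial irreducible rational representations $\rho_1, \ldots, \rho_k : \G \to \GL(V_i)$ with the property that every proper closed algebraic subgroup $\HH \leq \G$ is contained in $\rho_i^{-1}(\Stab_{\GL(V_i)}([v]))$ for some index $i$ and some line $[v] \in \P(V_i)$. Replacing $\HH$ by this larger stabilizer only increases $\mu^n(\HH)$, so it suffices to prove that for each $i$,
\[
\sup_{[v] \in \P(V_i)} \P\bigl(\rho_i(X_n)\cdot [v] = [v]\bigr) \leq e^{-cn},
\]
where $X_n := s_1 s_2 \cdots s_n$ is the random product with i.i.d.\ increments of law $\mu$.

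Next I would invoke the theory of products of random matrices. Since $\Gamma$ is Zariski-dense in $\G$ and $\rho_i$ is irreducible and nontrivial, $\rho_i(\Gamma)$ is Zariski-dense in the semisimple group $\rho_i(\G)$ and acts irreducibly on $V_i$. The Goldsheid-Margulis/Guivarc'h-Raugi theorem then yields simplicity of the full Lyapunov spectrum $\lambda_1 > \lambda_2 > \cdots > \lambda_{\dim V_i}$ of the induced walk on $\GL(V_i)$. The central input from \cite{breuillard-large-deviations} is a large deviation principle for the Cartan projection: for every $\eps > 0$ there is $c_\eps > 0$ with
\[
\P\Bigl( \bigl|\tfrac{1}{n} \log \sigma_j(\rho_i(X_n)) - \lambda_j\bigr| > \eps \;\text{for some } j \Bigr) \leq e^{-c_\eps n},
\]
together with a quantitative non-concentration estimate for the singular axes $U_n^{(j)} \in \P(V_i)$: there exist $c, \delta > 0$ (depending on $\G, \mu, \rho_i$, not on $[v]$) such that
\[
\sup_{[v] \in \P(V_i)} \P\bigl( U_n^{(j)} \in B([v], e^{-\delta n})\bigr) \leq e^{-c n} \qquad (1 \leq j \leq \dim V_i).
\]

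These two estimates combine as follows. On the high-probability event where the first large-deviation bound holds with some $\eps < \tfrac{1}{2}\min_j (\lambda_j - \lambda_{j+1})$, the moduli of the eigenvalues of $\rho_i(X_n)$ are pairwise separated with exponentially large multiplicative gaps; hence $\rho_i(X_n)$ has exactly $\dim V_i$ eigenlines, each lying at distance $e^{-c' n}$ from the corresponding singular axis $U_n^{(j)}$ (a standard perturbation estimate). If $\rho_i(X_n)\cdot [v] = [v]$, then $[v]$ is an eigenline, hence exponentially close to some $U_n^{(j)}$. The non-concentration estimate bounds the probability of this event by $e^{-cn}$, and summing over $j$ and over the finite list $i = 1, \ldots, k$ yields the desired uniform bound.

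The main technical obstacle is the non-concentration estimate of the second display above. Upgrading the classical large deviations for the Cartan projection into uniform, quantitative non-concentration of the singular axes around arbitrary points of $\P(V_i)$ requires both (a)~a uniform H\"older regularity estimate of the form $\nu^{(j)}(B([v], \eta)) \leq \eta^\alpha$ for the Furstenberg-type stationary measures $\nu^{(j)}$ on $\P(V_i)$, established via a ping-pong construction exploiting the Zariski-density of $\rho_i(\Gamma)$, and (b)~a coupling argument transferring this $\nu^{(j)}$-regularity into a finite-time estimate for the position of $U_n^{(j)}$. Treating all singular axes uniformly in $j$ (and not merely the top one) requires in an essential way the full simplicity of the Lyapunov spectrum, which is precisely what the techniques of \cite{breuillard-large-deviations} supply.
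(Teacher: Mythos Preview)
Your reduction via Lemma~\ref{dense} to the problem of bounding $\sup_{[v]}\P(\rho_i(X_n)\cdot[v]=[v])$ for finitely many irreducible representations is exactly the paper's approach. The difficulty is entirely in the second step, and there your argument has a genuine gap.

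You invoke the Goldsheid--Margulis and Guivarc'h--Raugi theorems to obtain simplicity of the full Lyapunov spectrum of $\rho_i(\mu)$, and your subsequent perturbation argument (separating eigenlines via singular axes) relies on this simplicity in an essential way. But neither result applies in the required generality. First, the theory of Lyapunov exponents and Cartan projections presupposes a local field, and you have not carried out the reduction from an arbitrary field $K$ of characteristic zero to a local field; this step (finding an absolute value for which $\Gamma$ is unbounded, as in Tits' argument) may well force you into a $p$-adic field rather than $\R$ or $\C$. Second, and more seriously, Guivarc'h--Raugi requires proximality of $\rho_i(\Gamma)$ to conclude $\lambda_1>\lambda_2$, and Goldsheid--Margulis supplies proximality only over $\R$. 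Over a non-archimedean local field a Zariski-dense subgroup need not be proximal in a given irreducible representation, so simplicity of the spectrum---and with it your entire eigenline-separation scheme---can fail.

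The paper isolates precisely this point: its Proposition~\ref{cont} gives the uniform bound on the probability of fixing a line under the hypotheses of strong irreducibility and unboundedness alone, \emph{without} assuming proximality. This is the content of \cite{breuillard-large-deviations} that is actually needed, and it is not reducible to the classical simplicity-of-spectrum results you cite.
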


We will not go here into all the details of the proof of Theorem \ref{deviation} and instead refer the reader to \cite{breuillard-large-deviations}. However we will indicate how the theory of random matrix products is used to derive it. Theorem \ref{deviation} is deduced from the following fact proved in  \cite{breuillard-large-deviations}.

\begin{proposition}(probability of fixing a line)\label{cont} Let $K$ be a local field of characteristic zero and $\mu$ a probability measure on $\GL_d(K)$ such that $\max\{\|g\|,\|g^{-1}\|\}^\eps$ is $\mu$-integrable for some $\eps>0$. Assume that the support of $\mu$ generates a subgroup $\Gamma_\mu$ which is not relatively compact in projection to $PGL_d(K)$ and does not preserve any finite union of proper vector subspaces of $K^d$. Then there is $c>0$ such that for every $n \geq 1$ and every line $x \in \P(K^d)$,
$$\mu^n(\{g \in \GL_d(K) ; g(x)=x\})<e^{-cn}.$$
\end{proposition}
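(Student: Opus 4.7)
The plan is to combine the Furstenberg--Guivarc'h theory of random matrix products with a Hölder cutoff argument. Under the two standing hypotheses --- strong irreducibility ($\Gamma_\mu$ preserves no finite union of proper subspaces) and non-relative-compactness in $\PGL_d(K)$ --- the theory guarantees three facts about the Markov chain $Y_n := g_n \cdots g_1 \cdot y$ on $\P(K^d)$, where the $g_i$ are i.i.d.\ $\mu$-distributed: (a) there is a unique $\mu$-stationary probability measure $\nu$ on $\P(K^d)$; (b) $\nu$ is $(C,\alpha)$-Hölder regular for some $C,\alpha>0$, i.e.\ $\nu(B(x,r)) \leq C r^\alpha$ for all $x \in \P(K^d)$ and $r>0$; (c) the Markov operator $T_\mu f(y) := \int f(g\cdot y)\,d\mu(g)$ has a spectral gap on some Hölder space $C^\beta(\P(K^d))$, meaning $\|T_\mu^n f - \int f\,d\nu\|_\infty \leq C' \rho^n \|f\|_{C^\beta}$ for some $\rho \in (0,1)$, $C'>0$ and all $n \geq 1$. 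The $\eps$-moment assumption is enough, since $\log^+ t \leq \eps^{-1}(t^\eps - 1)$ yields a log-moment.

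Granting (a)--(c), the reduction is short. Fix $x \in \P(K^d)$ and $r \in (0,1)$, and set $\psi_{x,r}(y) := \max(0,\, 1 - d(x,y)/r)$, a Lipschitz bump with $\mathbf{1}_{B(x,r/2)} \leq \psi_{x,r} \leq \mathbf{1}_{B(x,r)}$ and $\|\psi_{x,r}\|_{C^\beta} \leq C_0 r^{-\beta}$. Since every $g$ with $g(x)=x$ certainly satisfies $g\cdot x \in B(x,r/2)$,
\[
\mu^n(\{g : g(x) = x\}) \;\leq\; (T_\mu^n \psi_{x,r})(x) \;\leq\; \int \psi_{x,r}\,d\nu + C'C_0\rho^n r^{-\beta} \;\leq\; C r^\alpha + C' C_0 \rho^n r^{-\beta}.
\]
Optimising in $r$ by choosing $r := \rho^{n/(\alpha+\beta)}$ balances the two terms and yields $\mu^n(\{g : g(x)=x\}) \leq C_1 \rho^{n\alpha/(\alpha+\beta)} = e^{-cn}$ with $c := \frac{\alpha}{\alpha+\beta}\log(1/\rho) > 0$. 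The constants are uniform in $x$ since they depend only on the structural constants in (b) and (c), giving the stated bound.

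The main obstacle is fact (c), the existence of a spectral gap for $T_\mu$ on a Hölder space under the weak moment hypothesis and for an arbitrary local field $K$ of characteristic zero. Classically for $K=\R$ and under a second moment, this is the work of Le Page and of Guivarc'h, obtained via a Doeblin--Fortet / Lasota--Yorke inequality for $T_\mu$ on $C^\beta$ combined with the Ionescu Tulcea--Marinescu quasi-compactness theorem; the non-atomicity and Hölder regularity in (b) are then deduced from the spectral data of $T_\mu$ applied to $\nu$. Extending both (b) and (c) to arbitrary local fields of characteristic zero with only an $\eps$-moment assumption, and with no a priori spectral-gap assumption on $\Gamma_\mu$ itself, is precisely the content of \cite{breuillard-large-deviations}; once that technical package is available, the Hölder cutoff reduction above is essentially mechanical, and converts "probability of fixing the given line $x$" into "stationary mass of a small ball around $x$", where the non-atomicity of $\nu$ is quantified into exponential decay by the spectral gap.
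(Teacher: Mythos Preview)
The paper does not give a self-contained proof of this proposition; it states it as a fact proved in \cite{breuillard-large-deviations} and only sketches the surrounding context. So the comparison is between your outline and what the paper says about the method.

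Your reduction from (a)--(c) to the conclusion is clean and correct: once one has H\"older regularity of a stationary measure and a spectral gap for $T_\mu$ on a H\"older space, the bump-function argument you wrote down gives the exponential bound uniformly in $x$. This is indeed the classical Le Page route.

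The gap is in your assertion (c). The paper is explicit that the standard spectral-gap machinery (Le Page, Guivarc'h--Raugi) rests on \emph{proximality} of $\Gamma_\mu$ --- the existence of a rank-one limit in $\overline{K\Gamma_\mu}$ --- because that is what drives the contraction on $\P(K^d)$ underlying the Doeblin--Fortet inequality. As the paper stresses, proximality can fail, notably over $p$-adic fields, and under the hypotheses of the proposition (strong irreducibility and non-relative-compactness only) there is no reason to expect $T_\mu$ to have a spectral gap on $C^\beta(\P(K^d))$ in the form you stated. So (c) is not a black box one can simply invoke; it is precisely the point where the argument breaks down in general, and the content of \cite{breuillard-large-deviations} is not ``(c) holds anyway'' but rather a way to \emph{circumvent} the missing proximality. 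You acknowledge that (c) is the main obstacle and cite the same reference, but you have phrased the input from that reference in a form that is likely false as stated in the non-proximal case. A more honest formulation would be: the argument works verbatim when $\Gamma_\mu$ is proximal (e.g.\ Zariski-dense in $\SL_d(\R)$ via Goldsheid--Margulis), and in the general case one must appeal to a different mechanism from \cite{breuillard-large-deviations} rather than to a spectral gap on $C^\beta(\P(K^d))$.
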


The condition that the support of $\mu$ does not preserve any finite union of proper subspaces is usually called \emph{strong irreducibility}. It is equivalent to asking that every subgroup of finite index in $\Gamma_\mu$ acts irreducibly, or that the connected component of the Zariski-closure of $\Gamma_\mu$ acts irreducibly. This condition was introduced by Furstenberg in the 1960's in his study of random matrix products \cite{furstenberg}: he showed that under the conditions of the proposition, if $\mu$ is supported on $SL_d(k)$, then the first Lyapunov exponent of $\mu$ is positive, namely:

$$\lim \frac{1}{n} \int \log ||g|| d\mu^n(g) > 0 .$$

Another key theorem in the theory of random matrix products is the simplicity of the Lyapunov spectrum, due to Guivarc'h and Raugi \cite{guivarch-raugi}. It states that under the assumptions of proposition, if the subgroup $\Gamma_\mu$ is proximal (by definition this means that the semigroup $K\Gamma_\mu$ contains a rank one matrix in its closure in the algebra of $d \times d$ matrices $M_{d}(K)$), then the second Lyapunov exponent is strictly smaller than the first. In other words the random matrix product will almost surely contract almost all of the projective space $\P(k^d)$ into an exponentially small neighborhood of a point. From this the conclusion of proposition \ref{cont} can be easily obtained. However this requires the proximality assumption and this assumption does not always hold. It holds for measures $\mu$ supported on Zariski-dense subgroups of $\SL_d(\R)$ due to work of Goldsheid-Margulis \cite{goldsheid-margulis} and this was used by Bourgain and Gamburd in their work \cite{bourgain-gamburd-SLn}. But it does not hold in general in particular if we replace $\R$ with a $p$-adic field. So one needs to avoid this assumption if one wishes to establish Proposition \ref{cont} in full generality (and this generality is require to get Theorem \ref{deviation}). This is what is done in \cite{breuillard-large-deviations}.

\bigskip

Let us now explain how to derive Theorem \ref{deviation} from Proposition \ref{cont}. First we claim that we may assume that $K$ is a local field and that $\Gamma$ is not relatively compact in $\G(K)$. To see it, first note that we may assume $K$ to be finitely generated over $\Q$, since $K$ can be taken to be generated by the matrix entries of the elements of the finite generating set $S$. Now pick a semisimple element of infinite order in $\Gamma$ (it always exists, because $\Gamma$ is Zariski-dense in $\G$) and let $\lambda$ be one of its eigenvalues of infinite order. Find an absolute value on an algebraic closure of $K$, which is not equal to one on $\lambda$ and consider the associated completion to obtain the desired local field. This argument is standard, details can be found in \cite[Lemma 4.1.]{tits-alternative}.

Note that passing to a finite extension of $K$ if necessary, we may assume that $\G$ is $K$-split, so that each absolutely irreducible module of $\G$ can be defined over $K$. Next, we claim that there are a finite number of absolutely irreducible finite dimensional representations of $\G$, say $\rho_1,\ldots,\rho_k$, each of dimension at least $2$, such that every proper closed algebraic subgroup $\HH$ of $\G$ must stabilize a line in one of these representations. This claim was already verified in the proof of Lemma \ref{dense} above.

Finally, note that we may apply Proposition \ref{cont} to each $\rho_i(\Gamma)$, because $\rho_i(\Gamma)$ acts strongly irreducibly on the representation space of $\rho_i$ and is not relatively compact modulo scalars, because it is non relatively compact and of determinant $1$ since $\G$ is semisimple. Since there are only finitely many $\rho_i$'s to consider, we get the desired uniformity in $\HH$ and Theorem \ref{deviation} is proved.

\section{Proof of the super-strong approximation theorem}\label{pfsapp}

In this section we verify that the ingredients of the expansion machine (i.e. Proposition \ref{machine}) are all met under the assumptions of Theorem \ref{ssapp} and complete the proof of this theorem.

In view of Proposition \ref{machine}, we see that Theorem \ref{ssapp} will follow from Proposition \ref{machine} applied to the groups $G_0:=\G_p(\F_p)$ with generating sets $S_p$, where $S_p$ is the reduction modulo $p$ of the generating set $S$ of the Zariski-dense subgroup $\Gamma \leq \G(\Q)$, provided the three assumptions of Proposition \ref{machine} are fulfilled.  We saw in subsection \ref{verif} that Assumption (ii), the classification of approximate subgroups, is satisfied. Let us now consider Assumption (i).

\begin{proposition}[High multiplicity/Quasirandomness]\label{quasi}  Let $\G$ be a semisimple and simply connected algeraic group defined over $\Q$ and $p$ a large enough prime. Then every non-trivial irreducible representation $\rho: \G_p(\F_p) \to \GL_d(\C)$ of $G=\G_p(\F_p)$ has dimension at least $|G|^{\beta}$, where $\beta > 0$ depends only on the dimension of $\G$.
\end{proposition}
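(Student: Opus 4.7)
My plan is to reduce to the case of a single quasi-simple group of Lie type and then invoke the Landazuri--Seitz lower bound on the minimal degree of its nontrivial complex representations. For $p$ sufficiently large, $\G_p$ is a semisimple, simply connected group over $\F_p$, so the standard structure theory gives a decomposition $\G_p = \prod_{i=1}^r \mathrm{Res}_{\F_{q_i}/\F_p}\, \widetilde{\G}_i$, with each $\widetilde{\G}_i$ absolutely quasi-simple over an extension $\F_{q_i} = \F_{p^{e_i}}$ of $\F_p$, and with $r$, the $e_i$'s, and the $\dim \widetilde \G_i$'s all bounded in terms of $\dim \G$. Taking $\F_p$-points yields a true direct product $G := \G_p(\F_p) \cong G_1 \times \cdots \times G_r$ with $G_i := \widetilde{\G}_i(\F_{q_i})$ a quasi-simple finite group of Lie type, whose order is comparable to $q_i^{\dim \widetilde \G_i}$ by Lang--Weil.

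Every irreducible complex representation $\rho$ of the direct product $G$ factors as a tensor product $\rho_1 \otimes \cdots \otimes \rho_r$ of irreducibles of the factors; if $\rho$ is nontrivial then so is at least one $\rho_i$. For $p$ large each $G_i$ is perfect, so this nontrivial $\rho_i$ descends to a nontrivial projective representation of the simple quotient $G_i/Z(G_i)$, and the Landazuri--Seitz theorem yields $\dim \rho_i \geq c_i\, q_i^{a_i}$ with $a_i \geq 1$ depending only on the Lie type of $\widetilde{\G}_i$. Since $q_i \geq p$ we get $\dim \rho \geq \dim \rho_i \geq c_i\, p^{a_i}$, while $|G| = \prod_j |G_j| \leq p^{C}$ for some $C = C(\dim \G)$, so $\dim \rho \geq |G|^\beta$ with $\beta := \min_i a_i/C$, a constant depending only on $\dim \G$.

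The only nontrivial input here is the Landazuri--Seitz theorem, which is classical; the remainder is elementary bookkeeping. The one point of care is to check, for $p$ large, that each $G_i$ is indeed perfect and quasi-simple, so that Landazuri--Seitz applies directly to every nontrivial irreducible of $G_i$ via its projective image on the simple quotient; this is standard and excludes only finitely many small primes, absorbed into the ``$p$ large enough'' hypothesis of the proposition.
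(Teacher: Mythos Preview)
Your proof is correct and follows essentially the same approach as the paper: decompose $\G_p(\F_p)$ into a (direct) product of quasi-simple groups of Lie type over bounded-degree extensions of $\F_p$, pass to a factor on which the representation is nontrivial, and invoke Landazuri--Seitz. The paper additionally remarks that one can shortcut the full Landazuri--Seitz theorem by using that (except for Suzuki groups) each quasi-simple factor contains a copy of $\SL_2(\F_q)$ or $\mathrm{PSL}_2(\F_q)$, but this is an optional aside and your direct appeal to Landazuri--Seitz is exactly what the paper ultimately relies on.
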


\bigskip

\begin{proof} As observed by Sarnak-Xue \cite{sarnak-xue} and Gamburd \cite{gamburd-thesis}, this goes back to Frobenius in the case of $\SL_2$. In \cite{landazuri-seitz} Landazuri and Seitz proved that all non-trivial irreducible
projective representations of a finite simple group of Lie type have dimension at least $|G|^\beta$
for some $\beta > 0$ depending only on the rank, which implies the analogous
claim for irreducible linear representations of any quasi-simple group. Actually, since we do not need the best possible $\beta$, we can arrive to this conclusion rather quickly if we observe that (see e.g. [47, Theorem
4.1]) with the exception of the Suzuki groups, every quasi-simple finite group of Lie type contains a copy of either $SL_2(\F_q)$ or $PSL_2(\F_q)$. But both the Suzuki case and $PSL_2(\F_q)$,
 can be handled easily (see \cite[Lemma 4.1]{landazuri-seitz}).

Now $\G_p(\F_p)$ is an almost direct product of quasi-simple groups over $\F_q$, with $q=p^f$ and $f$ is bounded in terms of the dimension of $\G$ only. So any non trivial linear representation of $\G_p(\F_p)$ gives rise to a representation of a quasi-simple group of Lie type over $\F_{p^f}$ with $f$ and rank bounded in terms of $\dim \G$ only. The proposition follows.
\end{proof}

\bigskip

\noindent \emph{Remark.} Tim Gowers called a group \emph{quasi-random} if it has the property sought for in this proposition. In such groups large subsets behave in a quasi-random way in the sense that the (non-abelian) non trivial characters of the indicator function of a subset are  always very small (\cite{gowers}). This was used by Gowers to show that product-free sets (i.e.  subsets $A \subset G$ not containing any $x,y,z$ with $xy=z$) in such groups are small.

\bigskip

It now remains to verify Assumption (iii) of the Bourgain-Gamburd machine. This is usually the most difficult step. Here it will follow easily from the combination of the quantitative version of the strong approximation theorem proved in Section \ref{nori-strong} and the large deviation estimates from the theory of random matrix products recalled in the previous section.

We may assume that $\G \leq \GL_d$ and this allows us to define the height $H(\gamma)$ of an element of $\Gamma \leq \G(\Q)$ as in Theorem \ref{sappq}. In follows from $(\ref{submul})$ that for every $n \geq 1$ and every $\gamma \in S^n$,
\begin{equation}\label{heightbou}
H(\gamma) \leq (dM_S)^{nd^2},
\end{equation}
where we recall that $M_S$ is defined as
\begin{equation}\label{mdef}
M_S=\max\{H(s),s \in S\}
\end{equation}
and the height $H(s)$ is the naive height (maximum of the numerator and denominator of each matrix entries written as an irreducible fraction).

Fix $\tau>0$ to be determined below. Let $p_0$ be defined as in Theorem \ref{sappq} and $p>p_0$ be any prime number. Choose an even integer $n$ between $\tau \log p$ and $2\tau \log p$. Now let $H$ be a proper subgroup of $\G_p(\F_p)$,  and $S_{H,n}$ be the subset of all elements in $S^n$ whose reduction modulo $p$ lies in $H$. From $(\ref{heightbou})$ we see that if $\tau < 1/(2C_0d^2\log(dM_S))$, then
$$p> (M_{S_{H,n}})^{C_0},$$
where $C_0$ is the constant arising in Theorem \ref{sappq}. Hence Theorem \ref{sappq} applies to the symmetric set $S_{H,n}$ and we conclude that the subgroup generated by $S_{H,n}$ is not Zariski-dense in $\G$. Let $\HH$ be its Zariski-closure.

From Theorem \ref{deviation} we know that in $\Gamma$ and for all $n \geq 1$,

$$\mu_S^n(\HH) \leq e^{-cn},$$
where $c>0$ is a positive constant independent of the choice of $\HH$. However, the reduction mod $p$ map from $\Gamma$ to $\G_p(\F_p)$ is injective on all elements of height at most $p$, and hence on $S^n$, thanks to our choice of $n$ (of size roughly $\tau \log p$). Therefore
$$\mu_{S_p}^n(H) = \mu_{S}^n(\HH) \leq e^{-cn} \leq 1/p^{\tau c} \leq 1/|\G_p(\F_p)|^{\kappa},$$
where we have set $\kappa=c\tau/2d^2$, because $|\G_p(\F_p)|\leq p^{d^2}$.
In particular we see that the exponent $\kappa$ can be taken of the form $c_1\frac{c}{\log M(S)}$, where $c_1>0$ depends only on $\G$ and $c$ is the constant from Theorem deviation.
This establishes the non-concentration estimate needed in the Bourgain-Gamburd machine (Assumption (iii)) and ends the proof of the super-strong approximation theorem (Theorem \ref{ssapp}).

\begin{remark}(Explicit estimate on the gap) The proposed proof of Theorem \ref{deviation} is non effective (it uses the ergodic theorem in Proposition \ref{cont}) and hence gives no explicit lower bound on $c$. However it is likely that $c$ is in fact independent of the choice of $S$ provided $|S|$ is bounded. In that case the estimate given by Proposition \ref{machine} would give the following lower bound for the spectral gap:
$$\lambda_1 \geq  1/M_S^{O(1)},$$ where $M_S$ (see $(\ref{mdef})$) is the maximal height of an element of  $S$ and the implied constant depends only on $\G$ and the cardinal of $S$. See \cite{kowalski-explicit} for an explicit upper bound on the implied constant in the special case when $S$ belongs to $\SL_2(\Z)$.
\end{remark}

\subsection{Several prime factors}
The case of several (but boundedly many primes) can be handled at little additional cost. Assumptions (i) and (ii) of Prop. \ref{machine} have already been verified in this more general setting (see \S \ref{verif}). Assumption (iii) follows in the same way as before by projecting the proper subgroup $H$ to the largest simple factor where it remains proper. The corresponding bound on $\kappa$ and thus on the $\lambda_1$ will depend on the number of prime factors involved.

Hence we get the following improved version of Theorem \ref{ssapp}.

\begin{theorem}\label{severalp} Suppose $\G$ is a connected and simply connected semisimple algebraic group defined over $\Q$ and let $\Gamma \leq \G(\Q)$ be a Zariski-dense subgroup generated by a finite set $S$. Let also $r \in \N$. Then there is $\eps=\eps(S,r)>0$ such that for all large enough distinct prime numbers $p_1,\ldots,p_r$, the projection of $\Gamma$ in the finite group $G_0:=\prod_{i=1}^r \G_{p_i}(\F_{p_i})$ is surjective and the induced Cayley graph of $G_0$ is an $\eps$-expander.
\end{theorem}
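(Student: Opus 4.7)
The plan is to invoke the Bourgain--Gamburd machine (Proposition \ref{machine}) applied to $G_0:=\prod_{i=1}^r \G_{p_i}(\F_{p_i})$ with the generating set $S_0$ obtained by reducing $S$ modulo $p_1\cdots p_r$. For $p_1,\ldots,p_r$ large enough and distinct, Theorem \ref{strongapp} at each prime combined with Goursat's lemma gives surjectivity of $\Gamma\to G_0$: the non-abelian simple composition factors of the different $\G_{p_i}(\F_{p_i})$ have pairwise distinct orders and are thus pairwise non-isomorphic, so no non-trivial common quotients exist across different factors, ruling out the Goursat diagonals that would otherwise obstruct surjectivity. Assumption (i) then follows by applying Proposition \ref{quasi} factorwise, using that every irreducible representation of $G_0$ (or of any quotient) decomposes as an outer tensor product of factor representations, yielding $\beta=\beta(\dim\G)/r$. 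Assumption (ii) was already verified in \S\ref{verif} with $\delta(\eps)=\eps/(Dr)$ for $D=D(\dim\G)$.

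The core work is Assumption (iii) (the non-concentration estimate), where I would mirror the single-prime proof of Section \ref{pfsapp} by projecting $H$ down to the largest quasi-simple factor on which its image remains proper. Fix $\tau=\tau(S)>0$ small enough that inequality $(\ref{heightbou})$ forces $H(\gamma)\leq p_{\min}$ for every $\gamma\in S^n$ with $n\leq\tau\log p_{\min}$, guaranteeing both injectivity of the reduction modulo $p_1\cdots p_r$ on $S^n$ and applicability of Theorem \ref{sappq} at every prime $p_i\geq p_{\min}$. Pick an even integer $n$ in the range $[\tau\log p_{\min},2\tau\log p_{\min}]$; this is at most $\log|G_0|$. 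Given a proper $H\leq G_0$, Goursat together with the cross-prime non-isomorphism (with diagonal-within-a-single-prime bookkeeping handled exactly as in the single-prime case) produces an index $i^*$ with $\pi_{i^*}(H)$ proper in $\G_{p_{i^*}}(\F_{p_{i^*}})$, chosen to maximise $[\G_{p_i}(\F_{p_i}):\pi_i(H)]$ over such $i$. Then $\pi_{p_{i^*}}^{-1}(\pi_{i^*}(H))\cap S^n$ is a symmetric subset of $\Gamma$ whose elements have height under the allowable cap at $p_{i^*}$; by Theorem \ref{sappq} applied contrapositively at $p_{i^*}$, the subgroup it generates has proper Zariski closure $\HH$ in $\G$, and Theorem \ref{deviation} then gives $\mu_S^n(\HH)\leq e^{-cn}$, hence $\mu_{S_0}^n(H)\leq e^{-cn}$.

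The main obstacle is converting this absolute bound into the relative bound $\mu_{S_0}^n(H)\leq [G_0:H]^{-\kappa}$ required by the machine, with $\kappa=\kappa(S,r)>0$ uniform in the primes. The key inputs are the Goursat identity $[G_0:H]=\prod_i[\G_{p_i}(\F_{p_i}):\pi_i(H)]$ valid for large distinct primes (no common non-trivial simple quotients across primes), the pigeonhole inequality $[\G_{p_{i^*}}(\F_{p_{i^*}}):\pi_{i^*}(H)]\geq [G_0:H]^{1/r}$ coming from the maximising choice of $i^*$, and the single-prime comparison of $e^{-cn}$ to the index at $p_{i^*}$, which together close the loop with $\kappa=\kappa_0(S)/r$. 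Feeding the parameters $\beta$, $\delta(\eps)$ and $\kappa$, each of order $1/r$, into the spectral gap estimate of Proposition \ref{machine} then yields the uniform lower bound $\lambda_1\geq \eps(S,r)>0$ claimed by the theorem.
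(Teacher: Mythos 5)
Your overall strategy (feed $G_0=\prod_i\G_{p_i}(\F_{p_i})$ into Proposition \ref{machine}, with surjectivity and Assumptions (i), (ii) handled by Goursat, Landazuri--Seitz factorwise, and \S\ref{verif}) matches the paper, and those parts are fine. The gap is in your verification of Assumption (iii), and it is genuine: it comes from allowing the primes to have wildly different sizes, which the theorem permits ($\eps$ must not depend on the choice of the $p_i$). First, the choice $n\asymp\tau\log p_{\min}$ is too small. Take $r=2$, $p_1=p$, $p_2=q=p^{1000}$ and $H=\G_{p}(\F_p)\times\{1\}$. For your $n$, every $\gamma\in S^n$ has height $\ll q$, so $\gamma\equiv 1 \bmod q$ forces $\gamma=1$ in $\Gamma$ and $\mu_{S_0}^n(H)=\mu_S^n(1)$, which by Kesten is of order $e^{-cn}=p^{-O(\tau)}$ and cannot be $\leq [G_0:H]^{-\kappa}=q^{-\kappa\dim\G}=p^{-1000\kappa\dim\G}$ for any $\kappa$ uniform in the primes. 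Second, the ``Goursat identity'' $[G_0:H]=\prod_i[\G_{p_i}(\F_{p_i}):\pi_i(H)]$ is false: Goursat only gives $[G_0:H]\geq\prod_i[\G_{p_i}:\pi_i(H)]$, with strict inequality whenever $H$ is a fiber product over a nontrivial common section (e.g.\ maximal tori at $p_i,p_j$ with $\gcd(p_i-1,p_j-1)$ large share a big cyclic quotient), so your pigeonhole $[\G_{p_{i^*}}:\pi_{i^*}(H)]\geq[G_0:H]^{1/r}$ fails as well.

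The repair, which is what the paper's phrase ``largest simple factor where it remains proper'' is pointing at, is to choose $n$ even with $n\asymp\tau\log p_{\max}$ (still $\leq\log|G_0|$), and for each $H$ to select $i^*$ as the index of the \emph{largest prime} among those $i$ for which $\pi_i(H)$ is proper --- not the index maximising $[\G_{p_i}:\pi_i(H)]$. Two further ingredients close the argument. (a) Monotonicity: by Remark \ref{decH} the map $m\mapsto\mu^{2m}(H)$ is non-increasing, so you may bound $\mu^n(H)$ by $\mu^{n_0}(H)$ for the even $n_0\asymp\tau\log p_{i^*}\leq n$ at which the height bound $(\ref{heightbou})$ makes Theorem \ref{sappq} applicable at $p_{i^*}$; Theorem \ref{deviation} then gives $\mu^{n_0}(H)\leq e^{-cn_0}\leq p_{i^*}^{-c\tau/2}$. (b) Structure of $H$: since distinct large primes give factors with no common nonabelian composition factor, Goursat forces $H\supseteq\prod_{i\notin I}\G_{p_i}(\F_{p_i})$ where $I$ is the set of indices with proper projection, hence $[G_0:H]\leq\prod_{i\in I}|\G_{p_i}(\F_{p_i})|\leq p_{i^*}^{O(r\dim\G)}$ because every $p_i$ with $i\in I$ is at most $p_{i^*}$. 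Combining (a) and (b) yields $\mu^n(H)\leq[G_0:H]^{-\kappa}$ with $\kappa\asymp c\tau/(r\dim\G)$, uniformly in the primes, and the rest of your argument goes through.
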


Note that the spectral gap in this result depends on $r$ but not on the choice of the $r$ primes $p_1,\ldots,p_r$. Here again, if $\G$ is not assumed simply connected, then the projection of $\Gamma$ to $G_0$ may not be surjective, but it has bounded index (depending only on $\G$ and $r$) in $G_0$ and the induced Cayley graph of the image remains an $\eps$-expander. One reduces easily to the simply connected case by lifting to $\Gamma$ to the simply connected cover of $\G$ (see e.g. \cite[p.399-418]{lubotzky-segal}).

\begin{remark}(Groups defined over a number field)\label{numberfields} If $\Q$ is replaced by a number field $K$, then a similar result holds, which can be reduced to the case of $\Q$. If one wants to take quotients modulo prime ideals $\mathcal{P}$ of the ring of integers $\mathcal{O}_K$ of $K$, then one needs to be careful that the corresponding reduction may not be surjective on $\G(\mathcal{O}_K/\mathcal{P})$ (e.g. $SL_2(\Z)$ is Zariski-dense in $SL_2$, but maps onto $SL_2(\F_p)$ and not onto  $SL_2(\mathcal{O}_K/\mathcal{P}) \simeq SL_2(\F_{p^f})$ for any prime $\mathcal{P}$ with residual degree $f>1$.)
\end{remark}

To palliate this problem, one needs either to pass to a smaller number field (e.g. the one generated by the traces of the elements of $\Gamma$) or to consider the Zariski-closure of the embedding of $\Gamma$ under the restriction of scalars of $\G$ from $K$ to $\Q$. This Zariski closure will be semisimple and Theorem \ref{severalp} will apply. In case $\G$ is not simply connected, one can lift to the simply connected cover. At any case it will always be the case that if $\Gamma$ is a Zariski-dense subgroup of $\G(K)$ for some number field $K$ and semisimple algebraic $K$-group $\G$, then the quotients of $\Gamma$ modulo prime ideals of $\mathcal{O}_K$ will be expanders. This follows readily, by restriction of scalars, from Theoremn \ref{severalp}.

Bourgain-Gamburd-Sarnak \cite{bourgain-gamburd-sarnak} for $\SL_2$, Varj\'u \cite{varju} for $\SL_d$ and Salehi-Golsefidy-Varj\'u \cite{salehi-golsefidy-varju} in general for $\G$ perfect, went much further by establishing that the spectral gap can be made independent of $r$ (for a given $S$). This however requires to prove Assumption (ii) of the Bourgain-Gamburd machine in this setting, hence to understand approximate subgroups of large products of quasi-simple finite groups of bounded rank. This lies much deeper and requires a delicate multi-scale analysis. They prove:

\begin{theorem}[Salehi-Golsefidy-Varj\'u \cite{salehi-golsefidy-varju}]\label{alirezapeter} Let $q_0 \in \N$ and  $\Gamma=\langle S \rangle$ be a finitely generated subgroup of $\GL_d(\Z[\frac{1}{q_0}])$. Assume that the connected component of the Zariski closure of $\Gamma$ is perfect. Then there is $\eps=\eps(d,S)>0$ such that the Cayley graphs of the quotients $\pi_q(\Gamma)$ induced by the generating set $S$ are $\eps$-expanders uniformly over all square-free integers $q$ co-prime to $q_0$. Here $\pi_q$ is the reduction modulo $q$ defined on rational numbers with denominator co-prime to $q$.
\end{theorem}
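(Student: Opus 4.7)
The plan is to apply the Bourgain--Gamburd machine (Proposition \ref{machine}) to the family of finite quotients $G_q := \pi_q(\Gamma)$ for square-free $q$ coprime to $q_0$, with constants depending only on $S$ and $d$ but not on $q$. After passing to a finite-index subgroup of $\Gamma$ (which affects the spectral gap only by a bounded factor), I may assume the Zariski closure $\G$ of $\Gamma$ is connected; the perfectness hypothesis then forces $\G$ to be semisimple, since a connected solvable algebraic group is never perfect. Lifting to the simply connected cover and invoking a version of strong approximation for square-free moduli (a classical extension of Theorem \ref{strongapp}, proved along the lines of Section \ref{nori-strong} using Nori's theorem factor-by-factor and the Chinese remainder theorem), one sees that $\pi_q(\Gamma)$ has index bounded independently of $q$ inside $\prod_{p\mid q}\G_p(\F_p)$. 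Hence it suffices to establish $\varepsilon$-expansion uniformly in $q$ for $G_q$, which up to bounded index is a product, over primes $p\mid q$, of an almost direct product of a bounded number of quasi-simple groups of Lie type of bounded rank.

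For non-concentration (assumption (iii)), I would argue as in Section \ref{pfsapp}: choose an even integer $n$ comparable to $\log|G_q|$ small enough that the height bound \eqref{heightbou} makes $\pi_q$ injective on $S^n$. Given any proper subgroup $H\leq G_q$, Goursat's lemma yields a quasi-simple factor (or a small sub-product) on which the image of $H$ is still proper; the Zariski closure in $\G$ of the preimage in $S^n$ of $H$ is then a proper closed algebraic subgroup of $\G$. Theorem \ref{deviation} gives $\mu_S^n$-mass at most $e^{-cn}$ on that Zariski closure, which transfers via the injectivity of $\pi_q$ on $S^n$ to $\mu_{S_q}^n(H)\leq|G_q|^{-\kappa}$ with $\kappa$ independent of $q$. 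For quasirandomness (assumption (i)), any nontrivial irreducible representation of $G_q$ is an external tensor product across the quasi-simple factors with at least one nontrivial tensor factor; Landazuri--Seitz supplies the required polynomial lower bound, and using the formulation of the machine in which quasirandomness is applied to each quotient independently yields a uniform $\beta$ depending only on $\dim\G$.

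The main obstacle, and by far the deepest ingredient, is assumption (ii): the classification of approximate subgroups of $G_q$ with constants independent of the number of prime divisors of $q$. Theorem \ref{semisimpleproduct} applies, but its constants degrade with the rank of $G_q$ and are therefore useless when $q$ has many prime divisors. To circumvent this I would implement the multi-scale analysis of Salehi-Golsefidy and Varj\'u: given a $K$-approximate subgroup $A\subset G_q$, one analyses the projections $\pi_I(A)$ onto sub-products indexed by subsets $I$ of the primes dividing $q$, and proves by induction on $|I|$ a dichotomy---either $\pi_I(A)$ is already close to the full product $\prod_{p\in I}\G_p(\F_p)$, or $A$ concentrates at some intermediate scale in a nontrivial proper subgroup. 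The key ingredients are a product-compatible $\ell^2$-flattening statement, a prime-by-prime application of the Larsen--Pink estimate from Proposition \ref{lp-prop-app} to control fibers of the projections, and a bootstrap (essentially an entropy-decrement argument) that prevents the losses accumulated at each scale from compounding into a factor depending on the number of primes. This multi-scale step is where essentially all the new work lies; once it is in place, feeding the three verified assumptions into Proposition \ref{machine} yields $\varepsilon$-expansion of the Cayley graph of $G_q$ with $\varepsilon$ independent of $q$, completing the proof of Theorem \ref{alirezapeter}.
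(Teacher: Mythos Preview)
The paper does not prove Theorem~\ref{alirezapeter}; it only states it and attributes the proof to \cite{salehi-golsefidy-varju}. What the paper actually proves is the much weaker Theorem~\ref{severalp}, where $\eps$ is allowed to depend on the number $r$ of prime factors, and it explicitly says that removing this dependence ``lies much deeper and requires a delicate multi-scale analysis'' which it does not carry out. So there is no proof in the paper against which to compare your proposal.

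That said, your outline is an accurate description of the overall architecture of the Salehi-Golsefidy--Varj\'u argument, and you correctly identify the multi-scale analysis for assumption~(ii) as the main new ingredient. However, two points deserve comment. First, what you have written is a plan, not a proof: the phrase ``I would implement the multi-scale analysis of Salehi-Golsefidy and Varj\'u'' names the obstacle without overcoming it, and that analysis is essentially the entire content of \cite{salehi-golsefidy-varju}. Second, your treatment of assumption~(iii) has a gap. You take $n$ comparable to $\log|G_q|$ and then claim that the preimage in $S^n$ of a proper $H$ has proper Zariski closure in $\G$ by invoking quantitative strong approximation at a single prime factor $p\mid q$. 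But Theorem~\ref{sappq} requires the heights of the generators of the candidate subgroup to be at most $p^{1/C_0}$, whereas elements of $S^n$ with $n\sim\log|G_q|\sim\sum_{p'\mid q}\log p'$ have height roughly $M_S^{d^2 n}$, which can vastly exceed any power of a small prime factor $p$. This is precisely why, in \S\ref{pfsapp}, the paper's own bound on $\kappa$ degrades with the number of prime factors. Getting a uniform $\kappa$ requires either a different non-concentration argument or, as in \cite{salehi-golsefidy-varju}, a reorganisation of the machine that interleaves the flattening and non-concentration steps across scales.
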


To finish, let us quote the following related by-product of the Bourgain-Gamburd method.

\begin{proposition}(\cite[Prop. 8.4]{bggt})\label{nocommonfactor} Let $r \in \N$ and $\eps>0$. Suppose $G=G_1G_2$, where $G_1$ and $G_2$ are products of at most $r$ finite simple \textup{(}or quasisimple\textup{)} groups of Lie type of rank at most $r$. Suppose that no simple factor of $G_1$ is isomorphic to a simple factor of $G_2$. If $x_1=x_1^{(1)}x_1^{(2)},\ldots, x_k=x_k^{(1)}x_k^{(2)}$ are chosen so that $\{x_1^{(1)},\ldots,x_k^{(1)}\}$ and $\{x_1^{(2)},\ldots,x_k^{(2)}\}$ are both $\eps$-expanding generating subsets in $G_1$ and $G_2$ respectively, then $\{x_1,\ldots,x_k\}$ is $\delta$-expanding in $G$ for some $\delta=\delta(\eps,r)>0$.
\end{proposition}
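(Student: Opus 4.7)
The plan is to apply the Bourgain-Gamburd expansion machine (Proposition~\ref{machine}) to the pair $(G, \{x_1, \ldots, x_k\})$ and verify each of its three assumptions with constants depending only on $\eps$ and $r$. The group $G$ is an (almost direct) product of at most $2r$ quasi-simple groups of Lie type of rank at most $r$, so Assumption~(i) (quasirandomness, with exponent $\beta(r)/(2r)$) follows from Landazuri-Seitz applied factor by factor as in Proposition~\ref{quasi}, since a faithful representation of $G$ restricts to a faithful representation of each quasi-simple factor. Assumption~(ii) (classification of approximate subgroups) follows from Theorem~\ref{semisimpleproduct} exactly as at the end of Subsection~\ref{verif}, with $\delta(\eps) = \eps/D(r)$.

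The heart of the argument lies in Assumption~(iii), the non-concentration estimate; this is where the no-common-factor hypothesis enters. The key structural observation is that \emph{every proper subgroup $H \leq G$ admits a proper projection $\pi_i(H) \neq G_i$ for some $i \in \{1,2\}$}. Indeed, if both projections were surjective, Goursat's lemma would identify $H$ with the preimage of the graph of an isomorphism between quotients $G_1/K_1$ and $G_2/K_2$; each of these quotients is perfect since the $G_j$ are products of perfect quasi-simple groups, so such an isomorphism would pair a nonabelian composition factor of $G_1$ with one of $G_2$, impossible under the hypothesis unless both quotients are trivial, in which case $H = G$.

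Pick $i$ with $H_i := \pi_i(H) \neq G_i$ and take $n$ of order $C_\eps \log |G|$. Then Lemma~\ref{rwchar} applied to the $\eps$-expanding Cayley graph $\Cay(G_i, \{x_j^{(i)}\})$ gives near-equidistribution of $\mu_i^n$ on $G_i$, whence $\mu_i^n(H_i) \leq 2[G_i:H_i]^{-1}$; since $\pi_i(H) = H_i$, we have $\mu^n(H) \leq \mu_i^n(H_i) \leq 2[G_i:H_i]^{-1}$. If only one projection is proper, the same Goursat-perfectness argument forces $H = H_i \times G_{3-i}$ and $[G:H] = [G_i:H_i]$, so $\mu^n(H) \leq 2[G:H]^{-1}$ directly. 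When both projections are proper, one combines the analogous bound from the other projection with Goursat's index formula $[G:H] = [G_1:H_1][G_2:H_2]|Q|$ (where $Q$ is a common subquotient of $G_1$ and $G_2$) and the AM-GM inequality $\max_j[G_j:H_j] \geq \sqrt{[G_1:H_1][G_2:H_2]}$, together with a control on $|Q|$ coming from the no-common-factor hypothesis and the rank bound, to arrive at $\mu^n(H) \leq C_r\, [G:H]^{-\kappa}$ for some $\kappa = \kappa(\eps, r) > 0$.

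The main obstacle is precisely the final subcase of Assumption~(iii), where both $\pi_i(H)$ are proper and the Goursat quotient $Q$ is a priori nontrivial. The no-common-factor hypothesis is invoked a second time to force $Q$ to be solvable (its nonabelian composition factors would have to be simple factors of both $G_1$ and $G_2$), and this, combined with the rank bound on the quasi-simple factors and the quasirandomness-based bound $[G_i:H_i] \geq |G_i|^{\eta(r)}$ for proper ``generic'' projections, suffices to bound $|Q|$ tightly enough to close the inequality; small-index or bounded-size edge cases are handled separately and trivially. Once Assumption~(iii) is secured, the Bourgain-Gamburd machine outputs the required spectral gap $\delta = \delta(\eps, r) > 0$, completing the proof.
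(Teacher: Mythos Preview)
The paper does not actually prove this proposition; it merely quotes it from \cite{bggt}. Your overall strategy---running the Bourgain-Gamburd machine and verifying the non-concentration estimate via Goursat's lemma together with the assumed expansion on the factors---is the right one and is indeed what the cited source does. Your verification of Assumptions (i) and (ii), and your first Goursat argument (both projections surjective forces $H=G$), are correct.

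However, your treatment of Assumption (iii) beyond that point contains a real error. The claim ``if only one projection is proper, the same Goursat-perfectness argument forces $H = H_i \times G_{3-i}$'' is false, and so is the companion claim that the Goursat quotient $Q$ is solvable in the both-proper case. Take $G_1=\PSL_3(\F_p)$, $G_2=\PSL_2(\F_p)$ (no common simple factor), let $H_1\cong\SL_2(\F_p)$ sit inside $G_1$ via the block-diagonal embedding, and set $H=\{(h,\overline{h}):h\in H_1\}$ where $\overline{h}$ is the image of $h$ in $\PSL_2(\F_p)$. Then $\pi_1(H)=H_1\lneq G_1$, $\pi_2(H)=G_2$, yet $Q\cong\PSL_2(\F_p)$ is neither trivial nor solvable. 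The flaw is that the nonabelian composition factors of a \emph{subquotient} $H_1/K_1$ of $G_1$ need not be composition factors of $G_1$ itself; Jordan--H\"older controls quotients, not arbitrary subgroups. So the no-common-factor hypothesis does not, on its own, force $Q$ to be trivial or solvable once one projection drops below the full group.

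What rescues the argument is a different bookkeeping identity: when $\pi_2(H)=G_2$ one has $|Q|=|G_2/K_2|=|H_1/K_1|\le |H_1|$, whence $[G:H]=[G_1:H_1]\cdot|Q|\le |G_1|$; combined with the bound $\mu^n(H)\le 2/[G_1:H_1]$ from expansion on $G_1$, one then only needs $[G_1:H_1]\ge |G_1|^{\kappa}$, which follows from quasirandomness after projecting $H_1$ further down to a single quasi-simple factor of $G_1$ on which it fails to surject. The both-proper case is handled similarly. You should replace the incorrect ``$Q$ trivial/solvable'' claims with this size comparison.
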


The assumption that no simple factor of $G_1$ be isomorphic to a simple factor of $G_2$ is necessary here, because otherwise $\{x_1,\ldots,x_k\}$ may not generate. However what if we suppose it generates, is the conclusion still true without the assumption that $G_1$ and $G_2$ have no isomorphic factors (e.g. if $G_1=G_2=SL_2(\F_p)$) ? this is an open question.

\section{The group sieve method}

One of the leitmotives of the subject matter in this paper is the ability to study finite simple groups of Lie type as quotients of certain infinite linear groups and thereby to do geometry and analysis  on infinite groups in order to derive properties of finite groups, such as the expander property of their Cayley graph. The purpose of the sieve method is to achieve the converse: to study infinite linear groups from the properties of their finite quotients.

In this concluding section, we describe this method, first by showing a very simple application of Theorem \ref{ssapp} to random matrix product theory, where only one prime is required, and then by describing the \emph{group sieve lemma} of Lubotzky and Meiri and two of its applications to the study of \emph{generic properties} in infinite linear groups.

\subsection{Large deviations for subvarieties}

One of the simplest example showing the power of Theorem \ref{ssapp} is the following theorem. It says that random walks on linear groups do not concentrate much on any algebraic subvariety.

\begin{theorem}(Subvarieties are exponentially small) \label{subvariety} Let $K$ be a field of characteristic zero, $\Gamma \leq \GL_d(K)$ a non virtually solvable finitely generated subgroup and $\mu$ a probability measure whose support $S$ is a finite symmetric generating subset of $\Gamma$. Let $\G$ be the Zariski closure of $\Gamma$, and $R$ its solvable radical. Suppose $\mathcal{V}$ is an algebraic subvariety in $\GL_d$ such that $\dim(R(\mathcal{V} \cap \G))< \dim \G$. Then we have for all $n \geq 1$:
$$\mu^n( \Gamma \cap \mathcal{V}) \leq c_0(\mathcal{V}) \cdot e^{-cn},$$
where $c_0(\mathcal{V})>0$ is a constant depending only on the complexity (i.e. degree) of $\mathcal{V}$, and $c>0$ depends only on $\mu$.
\end{theorem}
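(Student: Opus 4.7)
The plan is to reduce to the semisimple case over $\Q$ and then combine super-strong approximation (Theorem~\ref{ssapp}) with the equidistribution characterisation of expanders (Lemma~\ref{rwchar}) and the Lang--Weil/Schwarz--Zippel estimate on the number of $\F_p$-points of a subvariety.

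First I would carry out the reductions. Replacing $\Gamma$ by its intersection with the identity component $\G^\circ$ (a finite-index subgroup, which only affects $c_0$ and $c$) we may assume $\G$ is connected. Let $\pi\colon\G\to\bar\G:=\G/R$ be the projection to the semisimple quotient. Because $\Gamma$ is not virtually solvable, $\bar\G$ is non-trivial and $\bar\Gamma:=\pi(\Gamma)$ is Zariski-dense in $\bar\G$. The hypothesis $\dim R(\mathcal{V}\cap\G)<\dim\G$ is equivalent to the Zariski closure $\bar{\mathcal{V}}$ of $\pi(\mathcal{V}\cap\G)$ being a proper subvariety of $\bar\G$, since $R(\mathcal{V}\cap\G)=\pi^{-1}(\bar{\mathcal{V}})$ has dimension $\dim R+\dim\bar{\mathcal{V}}$. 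As $\gamma\in\Gamma\cap\mathcal{V}$ implies $\pi(\gamma)\in\bar{\mathcal{V}}$, we obtain $\mu^n(\Gamma\cap\mathcal{V})\leq\bar\mu^n(\bar{\mathcal{V}})$, where $\bar\mu$ is the pushforward of $\mu$. We may thus assume $\G$ is semisimple and $\mathcal{V}$ a proper closed subvariety. Since $\Gamma$ is finitely generated, a standard specialisation of the transcendental generators of its coefficient field to rational numbers, followed by restriction of scalars (Remark~\ref{numberfields}), further reduces the situation to $K=\Q$.

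For the main step, Theorem~\ref{ssapp} provides $\eps=\eps(S)>0$ such that for all sufficiently large primes $p$ one has $\Gamma_p=\G_p(\F_p)$ and $\Cay(\G_p(\F_p),S_p)$ is an $\eps$-expander. Let $\mu_p$ denote the pushforward of $\mu$ to $\Gamma_p$. By Lemma~\ref{rwchar}, for $n\geq C_\eps\log|\G_p(\F_p)|$ the measure $\mu_p^n$ is close to uniform in $\ell^\infty$. Since reduction mod $p$ maps $\Gamma\cap\mathcal{V}$ into $\mathcal{V}_p(\F_p)\cap\G_p(\F_p)$, summing the equidistribution bound gives
\[
\mu^n(\Gamma\cap\mathcal{V})\;\leq\;\mu_p^n(\mathcal{V}_p(\F_p))\;\leq\;2\,\frac{|\mathcal{V}_p(\F_p)|}{|\G_p(\F_p)|}.
\]
Schwarz--Zippel/Lang--Weil then gives $|\mathcal{V}_p(\F_p)|\leq c_0(\mathcal{V})\,p^{\dim\mathcal{V}}$ once $p$ exceeds a threshold depending only on the complexity of $\mathcal{V}$, and $|\G_p(\F_p)|\geq c_1 p^{\dim\G}$. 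Since $\dim\mathcal{V}<\dim\G$, the right-hand side is $O(c_0(\mathcal{V})/p)$.

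To conclude, one tunes $p$ to be exponential in $n$: for each sufficiently large $n$, Bertrand's postulate supplies a prime $p\in[e^{n/C'},2e^{n/C'}]$, where $C':=2C_\eps\dim\G$, for which the constraint $n\geq C_\eps\log|\G_p(\F_p)|$ is automatic and the previous estimate becomes $\mu^n(\Gamma\cap\mathcal{V})\leq c_0(\mathcal{V})\,e^{-n/C'}$. For the remaining small values of $n$ the trivial bound $\mu^n(\mathcal{V})\leq1$ is absorbed by enlarging $c_0(\mathcal{V})$. The main obstacle lies in reconciling the two opposite constraints on $p$: Schwarz--Zippel demands $p$ large in order to make $1/p$ small, while expander equidistribution demands $n\geq C_\eps\log|\G_p(\F_p)|$; the two are compatible precisely because $\eps$ in Theorem~\ref{ssapp} (and hence $C_\eps$) is a constant independent of $p$, so $p$ may be chosen exponential in $n$. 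A secondary subtlety is the specialisation to $\Q$, which must preserve Zariski-density of $\Gamma$, the solvable radical, and properness of $\mathcal{V}$; this is however standard.
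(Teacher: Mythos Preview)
Your argument is essentially the paper's: reduce to $\G$ semisimple over $\Q$, apply Theorem~\ref{ssapp}, feed Lemma~\ref{rwchar} and Schwarz--Zippel into the estimate, and choose $p$ exponential in $n$. One point where the paper is more careful: rather than ``replacing $\Gamma$ by $\Gamma\cap\G^\circ$'' (which is awkward because $\mu$ is supported on $S\subset\Gamma$, not on a generating set for $\Gamma_0$, so the relation between $\mu^n$ and a walk on $\Gamma_0$ is not immediate), the paper keeps $(\Gamma,\mu)$ throughout, applies Theorem~\ref{ssapp} to $\Gamma_0:=\Gamma\cap\G^\circ$ to get that $(\Gamma_0)_p$ expands, then uses that $\Gamma_p\supseteq(\Gamma_0)_p$ with bounded index to conclude $\Cay(\Gamma_p,S_p)$ is itself an $\eps'$-expander, so Lemma~\ref{rwchar} applies directly to $\mu_p$; it also first reduces to $1\in S$ (via even $n$ and $S\rightsquigarrow S^2$) to ensure the non-bipartiteness hypothesis of that lemma.
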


Note that we have already shown a special case of this theorem in Theorem \ref{deviation} above. Theorem \ref{deviation} claimed essentially the same result when the subvariety $\mathcal{V}$ is assumed to be an algebraic subgroup. Although a direct approach similar to the proof of the Larsen-Pink inequality (Prop. \ref{lp-prop-app}) might be successful in deriving Theorem \ref{subvariety} from Theorem \ref{deviation}, the sieve method here can be implemented without any effort (modulo standard reductions) and yields Theorem \ref{subvariety} as a direct consequence of the super-strong approximation theorem (Theorem \ref{ssapp}) as we now show. This was already observed (and proved in a special case) in the original work of Bourgain-Gamburd \cite[Corollary 1.1]{bourgain-gamburd-SLn}.

\bigskip

\begin{proof} We first reduce to the case when the Zariski-closure $\G$ of $\Gamma$ is semisimple and defined over $\Q$. Taking the quotient modulo the solvable radical $R$, we may assume that $\G$ is semisimple (with connected component of the identity $\G^0$). Now since $\Gamma$ is finitely generated, we may assume that the field $K$ is finitely generated over $\Q$, hence is a finite algebraic extension of a purely transcendental extension of $\Q$ with a finite transcendence basis. One may then specialize and pick algebraic values for this transcendence basis in such way that the connected component of the Zariski closure of the resulting image group $\Gamma'$, now a subgroup of $\GL_d(\overline{\Q}$), is still $\G^0$ (this follows from Lemma \ref{dense}, see also \cite{larsen-lubotzky} for a related statement). Now taking the restriction of scalars to $\Q$ we have reduced to the case when $K=\Q$ and $\G^0$ is semisimple.

It is enough to prove the result for $n$ even, and hence replacing $S$ with $S^2$ we may assume that $1$ belongs to $S$ (note that the subgroup generated by $S^2$ has finite index in $\Gamma$). Let then $\Gamma_0:=\Gamma \cap \G^0$. It is a subgroup of finite index in $\Gamma$ which is Zariski dense in $\G^0$. Now pick a large prime $p$. For $p$ large enough, we know by the super-strong approximation theorem (Theorem \ref{ssapp}) that $(\Gamma_0)_p$, the reduction mod $p$ of $\Gamma_0$, has bounded index in $\G^0_p(\F_p)$ and that its induced Cayley graph is an $\eps$-expander for some $\eps>0$ independent of $p$. It follows that the reduction mod $p$ of $\Gamma$, itself is a finite group $G_p$ containing $(\Gamma_0)_p$ as a subgroup of bounded index and hence is also an $\eps'$-expander for some $\eps'>0$ independent of $p$ and depending only on $\eps$, $\G$, and the index of $\Gamma_0$ in $\Gamma$. Moreover $S_p$ is not contained in a coset of a proper subgroup of $G_p$, because $1 \in S_p$. By the random walk characterization of expanders (see Lemma \ref{rwchar} above), this means that random walks at any time larger than $C_\eps \log p$ are very well equidistributed in the sense that if $n =[C_{\eps'} \log |G_p|]$ say
$$|\mu_p^n(x) - \frac{1}{|G_p|}| \leq 1/|G_p|^{10}$$
for every $x \in G_p$. In particular
$$\mu^n(\mathcal{V}) \leq \mu_p^n( \mathcal{V} \textnormal{ mod } p) \leq \frac{|\mathcal{V}_p|}{|G_p|} + 1/|G_p|^9,$$
However the assumption on $\mathcal{V}$ implies that $|\mathcal{V}_p|\leq c_0(\mathcal{V})p^{\dim \G -1}$ while $|G_p| = \Omega(p^{\dim \G})$ (see the Schwarz-Zippel lemma in \cite{bggt}). If follows that
$$\mu^n(\mathcal{V}) \leq c_0(\mathcal{V})\cdot O(1/p),$$
with the implied constant depending only on $\G^0$. Now given any large $n$, one needs only pick a prime $p$ such that $n$ is roughly of size $C_\eps \log |G_p|$ and the result follows.
\end{proof}

For another method towards Theorem \ref{subvariety} and related partial results see the work of Aoun \cite{aoun}.

We now pass to a corollary of Theorem \ref{subvariety}. In \cite{aoun-free}, R. Aoun showed a probabilistic version of the Tits alternative: he proved that two independent random walks on a non virtually solvable linear group eventually generate a free subgroup. In other words a generic pair of elements always generates a free subgroup.  Combining Theorem \ref{subvariety} with Lemma \ref{dense} we can now assert that a generic pair of elements generates a Zariski-dense free subgroup, namely:

\begin{corollary}(A generic pair generates a Zariski-dense free subgroup) \label{free} Under the assumptions of Theorem \ref{subvariety} assume further that the Zariski closure of $\Gamma=\langle S \rangle$ is connected semisimple. Let $\mathcal{E}$ be the set of pairs $(a,b)$ in $\Gamma \times \Gamma$ such that the subgroup $\langle a, b \rangle$ is either not free, or not Zariski dense in $\Gamma$.  Then there is $c=c(\mu)>0$
$$\mu^n \times \mu^n( \mathcal{E}) \leq e^{-cn}.$$
\end{corollary}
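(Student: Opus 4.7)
The plan is to split the bad set $\mathcal{E}$ into two pieces according to the two ways a pair $(a,b)$ can fail the conclusion, and to bound each piece separately using an already-established exponential estimate. Set
\[
\mathcal{E}_{\text{free}} = \{(a,b) \in \Gamma \times \Gamma \,:\, \langle a, b\rangle \text{ is not free}\}, \quad
\mathcal{E}_{\text{dense}} = \{(a,b) \in \Gamma \times \Gamma \,:\, \langle a, b\rangle \text{ is not Zariski-dense in } \G\},
\]
so that $\mathcal{E} = \mathcal{E}_{\text{free}} \cup \mathcal{E}_{\text{dense}}$. For $\mathcal{E}_{\text{free}}$, Aoun's probabilistic Tits alternative \cite{aoun-free} directly provides a bound of the form $\mu^n \times \mu^n(\mathcal{E}_{\text{free}}) \leq e^{-c_1 n}$, which takes care of the first piece.

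The main work is to get the analogous bound on $\mathcal{E}_{\text{dense}}$, and this I would do by reducing it to a single instance of Theorem \ref{subvariety} applied to the product group. First I would invoke Lemma \ref{dense} (applied to the connected semisimple $\Q$-group $\G$, after the standard specialization and restriction-of-scalars reductions used in the proof of Theorem \ref{subvariety}) to produce a proper closed algebraic subvariety $\X \subseteq \G \times \G$ whose $K$-points are exactly the pairs $(a,b)$ contained in some proper algebraic subgroup of $\G$; this is precisely the set where $\langle a,b\rangle$ fails to be Zariski-dense, so $\mathcal{E}_{\text{dense}} = (\Gamma \times \Gamma) \cap \X$. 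Next I would embed $\G \times \G$ block-diagonally in $\GL_{2d}$ and regard $\Gamma \times \Gamma$ as a finitely generated linear subgroup with Zariski closure $\G \times \G$; this closure is connected semisimple with trivial solvable radical, and $\Gamma \times \Gamma$ is non-virtually-solvable since $\Gamma$ is. Since $\X$ is proper, $\dim(\X \cap (\G \times \G)) < \dim(\G \times \G)$, so the hypothesis on $\mathcal{V}$ in Theorem \ref{subvariety} is satisfied.

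It then remains to observe that the product walk is itself a random walk on $\Gamma \times \Gamma$: if we replace $S$ by $S \cup \{1\}$ (which only affects the walk by a harmless pre-multiplication, as in the parity trick used in the proof of Theorem \ref{subvariety}), then $S \times S$ is a finite symmetric generating subset of $\Gamma \times \Gamma$ and $\mu \times \mu$ is a probability measure supported on it, with $n$-fold convolution equal to $\mu^n \times \mu^n$. Applying Theorem \ref{subvariety} to the group $\Gamma \times \Gamma \leq \GL_{2d}(\Q)$ with step distribution $\mu \times \mu$ and subvariety $\X$ then yields $\mu^n \times \mu^n(\mathcal{E}_{\text{dense}}) \leq c_0(\X)\, e^{-c_2 n}$. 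Combining the two bounds and taking $c = \min(c_1, c_2)/2$ gives the desired $\mu^n \times \mu^n(\mathcal{E}) \leq e^{-cn}$ for $n$ large, and the corollary follows (absorbing small $n$ into the constant).

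The only non-trivial point is the verification in the middle paragraph that the product walk indeed satisfies the hypotheses of Theorem \ref{subvariety}; this is a purely formal check and involves no new ideas. Once Aoun's theorem and Theorem \ref{subvariety} are in hand, the corollary is essentially a one-line combination, which is why it is stated as a corollary rather than a theorem.
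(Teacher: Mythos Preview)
Your proof is correct and follows essentially the same approach as the paper: split $\mathcal{E}$ into the non-free pairs (handled by Aoun's theorem) and the non-Zariski-dense pairs (handled by applying Theorem \ref{subvariety} to $\Gamma \times \Gamma$ with the subvariety $\X$ from Lemma \ref{dense}). The paper's own proof is a two-sentence version of exactly this argument, without the explicit verification of hypotheses that you supply.
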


\begin{proof} Aoun's theorem \cite{aoun-free} tells us that for some $c>0$, $\mu^n \times \mu^n( \mathcal{NF}) \leq e^{-cn}$, where $\mathcal{NF}$ is the set of non-free pairs. Now applying Theorem \ref{subvariety} to the group $\Gamma \times \Gamma$ in $\G \times \G$ the measure $\mu \times \mu$ and subvariety $\mathcal{V}=\mathbf{X}$ from Lemma \ref{dense}, we get the desired result.
\end{proof}

For related results, see Aoun's work \cite{aoun} and Rivin's \cite{rivin-zariski}.

\subsection{The group sieve lemma}
The spectral gap for mod $p$ quotients has been exploited by Rivin \cite{rivin} and Kowalski \cite{kowalski-book} to perform sieving on arithmetic lattice subgroups. Prior to the new results on thin groups such as the super-strong approximation theorem, the spectral gap was known in a variety of cases for mod $p$ or mod $n$ quotients of arithmetic subgroups. Thanks to super-strong approximation (i.e. Theorem \ref{ssapp} or \cite{salehi-golsefidy-varju}), we can now perform this sieving on arbitrary Zariski-dense subgroups (i.e. thin subgroups).

In Theorem \ref{subvariety} we used only one prime number to show our non concentration estimate. The power of the sieve consists in taking advantage of several primes and using as a guiding principle that primes are essentially independent.

Lubotzky and Meiri \cite{lubotzky-meiri} formulated the following elegant lemma, which gives a simple set of conditions to be fulfilled in order to get further genericity results (akin to Theorem \ref{subvariety} above) that may require more than one prime.

\begin{lemma}[Group sieve lemma]\label{group-sieve}
Let $\Gamma=\langle S \rangle$ be a group generated by a finite symmetric set $S$ and $\{N_i\}_{1 \leq i \leq N}$ be a finite sequence of finite index normal subgroups. Set $\pi_i: \Gamma \to \Gamma/N_i$ the projection maps. Let $\mathcal{Z} \subset \Gamma$ be a subset of $\Gamma$ and assume that there are positive constants $D,\eps,\alpha$, with $\alpha \in (0,1)$, such that

\begin{itemize}
    \item $\Cay(\Gamma/(N_i \cap N_j), S \textnormal{ mod } N_i \cap N_j)$ for $i \neq j$ are $\eps$-expanders;
    \item $\Gamma/(N_i \cap N_j) \simeq \Gamma/N_i \times \Gamma/N_j$ for $i \neq j$;
    \item $|\Gamma / N_i| \leq N^D$ for all $i=1,\ldots,N$;
    \item $|\pi(\mathcal{Z})|\leq (1-\alpha)|\pi_i(\Gamma)|$ for all $i=1,\ldots,N$.

\end{itemize}
Then there is a constant $B=B(\eps,D,\alpha)>0$ such that for all $n \geq B \log N$,
$$\mu_S^n(\mathcal{Z})\leq \frac{1}{N}.$$
\end{lemma}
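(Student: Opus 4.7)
The natural approach is a second-moment (large-sieve-style) argument. Setting $Z_i:=\pi_i(\mathcal{Z})$ and $\phi_i := 1_{Z_i} - |Z_i|/|\Gamma/N_i|$, the function $\phi_i$ has mean zero on $\Gamma/N_i$, and the density hypothesis $|Z_i|\leq(1-\alpha)|\Gamma/N_i|$ gives $\phi_i(\pi_i(\gamma)) \geq \alpha$ for every $\gamma \in \mathcal{Z}$. Let $F := \sum_{i=1}^N \phi_i\circ\pi_i$; then $F(\gamma) \geq N\alpha$ on $\mathcal{Z}$, whence $N^2\alpha^2 \cdot 1_{\mathcal{Z}} \leq F^2$ pointwise. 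Integrating against $\mu_S^n$ yields
\[ N^2 \alpha^2 \, \mu_S^n(\mathcal{Z}) \;\leq\; \mu_S^n(F^2) \;=\; \sum_{i,j=1}^N \mu_S^n\bigl((\phi_i\circ\pi_i)(\phi_j\circ\pi_j)\bigr). \]

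The heart of the proof is then to bound this right-hand side by essentially $N$ once $n \geq B\log N$. The diagonal terms $i=j$ are trivial: since $|\phi_i|\leq 1$ each contributes at most $1$, for a total of at most $N$. For $i\neq j$, the second hypothesis identifies $\Gamma/(N_i\cap N_j)$ with $\Gamma/N_i \times \Gamma/N_j$, so the expectation of $(\phi_i\circ\pi_i)(\phi_j\circ\pi_j)$ against the uniform measure on $\Gamma/(N_i\cap N_j)$ factors as $(\E\phi_i)(\E\phi_j) = 0$. It therefore suffices to control the error between the pushforward of $\mu_S^n$ to $\Gamma/(N_i\cap N_j)$ and the uniform measure. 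This is where the first and third hypotheses enter together: by Lemma \ref{rwchar} applied to the $\eps$-expander $\Cay(\Gamma/(N_i\cap N_j),S)$, combined with the size bound $|\Gamma/(N_i\cap N_j)|\leq N^{2D}$, this pushforward is within total variation $e^{-n/C_\eps}/N^{18D}$ of uniform as soon as $n \geq 2DC_\eps \log N$. Since $\|\phi_i\phi_j\|_\infty \leq 1$, choosing $B$ large enough makes the $N^2$ off-diagonal pairs contribute at most $1$ in total.

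Combining the two steps yields $\mu_S^n(\mathcal{Z}) \leq (N+1)/(N^2\alpha^2) \leq 2/(\alpha^2 N)$; the stated form $1/N$ is obtained by absorbing the constant $2/\alpha^2$ into $B$ (equivalently, by reading the lemma as giving a bound $C(\alpha)/N$, which is all that is ever used in applications such as Borel--Cantelli). The main obstacle is precisely the quantitative off-diagonal control: one must choose $B$ large enough that the equidistribution error on each $\Gamma/(N_i\cap N_j)$ beats the $N^2$ count of off-diagonal pairs, and this is exactly what dictates the dependence $B = B(\eps,D,\alpha)$. Beyond the spectral input of Lemma \ref{rwchar}, no properties of $\Gamma$, of $\mathcal{Z}$, or of the normal subgroups $N_i$ beyond the four listed hypotheses are used, which is what makes the lemma a convenient black box.
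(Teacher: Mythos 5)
Your argument is correct and is essentially the paper's proof: both are second-moment computations exploiting the off-diagonal cancellation that comes from equidistribution on the pairwise quotients $\Gamma/(N_i\cap N_j)\simeq \Gamma/N_i\times\Gamma/N_j$, the paper merely packaging the same computation as a Paley--Zygmund bound for $X=\sum_i 1_{A_i^c}$ (its ``Fact'' on pairwise almost independent events) where you apply Chebyshev to the centered sum $F$. Two small points to tidy up: before invoking Lemma \ref{rwchar} one should make the standard reduction to even $n$ and $1\in S$ (replace $S$ by $S^2$) to rule out bipartiteness, and the intermediate total-variation bound should be $e^{-n/C_\eps}/|\Gamma/(N_i\cap N_j)|^{9}$ rather than $e^{-n/C_\eps}/N^{18D}$ (the inequality between these goes the other way, though the crude bound $e^{-n/C_\eps}$ already suffices); your observation that the conclusion really reads $O_\alpha(1)/N$ applies equally to the paper's own proof.
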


As before we have denoted by $\mu_S$ the uniform probability measure on the finite symmetric generating set $S$. Note that only the last assumption involves the set $\mathcal{Z}$. In applying this lemma, typically the $\pi_i$ will be the reduction maps modulo a prime $p_i$. It is crucial that the constant $B(\eps,D,\alpha)$ depends only on these three parameters and not on $\Gamma$, nor the choice of the sequence $\{N_i\}_i$.

The proof of this lemma is quite short, but before we give it in full,  let us comment on it a little. Let $S_n:=Y_1\cdot\ldots\cdot Y_n$ be the product of $n$ independent random variables $Y_1,\ldots,Y_n$ on $\Gamma$ all distributed according to the same probability distribution $\mu_S$ (the uniform distribution on the generating set $S$). The key feature of an expander graph is that random walks on them become equidistributed very fast. By the first item in the above lemma, the Cayley graph of $\Gamma/(N_i \cap N_j)$ is an $\eps$-expander. Clearly this also implies that the quotients $\Gamma/N_i$ and $\Gamma/N_j$ are $\eps$-expanders. Hence the distributions of $\pi_i(S_n)$ and $\pi_j(S_n)$ are very close to the uniform distribution on $\Gamma/N_i$ and $\Gamma/N_j$ respectively as long as $n \geq C_\eps \log |\pi_i(\Gamma)|$, so in particular if $n \geq C_\eps D \log N$ (thanks to the third item). By the second item the natural injection from $\Gamma/(N_i \cap N_j)$ to $\Gamma/N_i \times \Gamma/N_j$ is surjective. This implies that the joint distribution $(\pi_i(S_n),\pi_j(S_n))$ is also close to the uniform distribution, and hence that $\pi_i(S_n)$ and $\pi_j(S_n)$ are almost independent as random variables.

Suppose for a second that they were actually independent. Then quite obviously, using the fourth item in the last inequality:

$$\P(S_n \in \mathcal{Z}) \leq \P(\pi_i(S_n) \in \pi_i(\Z) \textnormal{  }\forall i \leq e^{n/C_\eps D}) \leq (1-\alpha)^{e^{n/C_\eps D}},$$
where $\P(\Omega)$ denotes the probability of the event $\Omega$. We would thus get a super-exponential decay of the probability of belonging to $\mathcal{Z}$.

Of course joint independence is too much to hope for, but the expander property on $\Gamma/N_i \times \Gamma/N_j$ implies that the $\pi_i(S_n)$ are pairwise almost independent. Now the following classical result from basic probability theory (the second moment method) allows us to take advantage of this pairwise almost independence in order to derive a meaningful upper bound on $\P(S_n \in \mathcal{Z})$.

\bigskip

\begin{lemma}\label{moment} Let $X\geq 0$ be a real random variable with $\E(X^2)<\infty$ and $T\geq 1$ a parameter.
\begin{enumerate}
\item (1st moment method) $\P(X \leq T \cdot\E(X)) \geq 1- 1/T$;
\item (2nd moment method) $\P(X \geq \frac{1}{T}\cdot\E(X)) \geq (1-\frac{1}{T})^2 \frac{\E(X)^2}{\E(X^2)}$.
\end{enumerate}
\end{lemma}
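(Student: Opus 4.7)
The plan is to treat the two parts separately, each being a classical one-line manipulation. Both are instances of standard moment inequalities, so I would present them in a way that matches the exposition style of the preceding sieve discussion.

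For (1), I would simply invoke Markov's inequality applied to the nonnegative random variable $X$: since $\E(X) \geq T \E(X) \cdot \P(X > T \E(X))$, one obtains $\P(X > T \E(X)) \leq 1/T$, and the complement gives exactly the stated bound $\P(X \leq T \E(X)) \geq 1 - 1/T$.

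For (2), the key idea is the Paley--Zygmund trick, splitting the expectation according to whether $X$ is small or large and using Cauchy--Schwarz on the large part. Specifically, I would write
\[
\E(X) = \E\bigl(X \mathbf{1}_{\{X < \E(X)/T\}}\bigr) + \E\bigl(X \mathbf{1}_{\{X \geq \E(X)/T\}}\bigr),
\]
bound the first term crudely by $\E(X)/T$, and apply Cauchy--Schwarz to the second term to get
\[
\E\bigl(X \mathbf{1}_{\{X \geq \E(X)/T\}}\bigr) \leq \sqrt{\E(X^2)} \cdot \sqrt{\P(X \geq \E(X)/T)}.
\]
Rearranging yields $(1 - 1/T) \E(X) \leq \sqrt{\E(X^2) \cdot \P(X \geq \E(X)/T)}$, and squaring gives the claim.

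There is no real obstacle here: both statements are textbook, and the only minor care needed is to observe that the $T \geq 1$ hypothesis ensures $1 - 1/T \geq 0$ so squaring preserves the inequality, and that the case $\E(X) = 0$ is trivial (forcing $X = 0$ almost surely, in which case both inequalities hold vacuously with the convention $0/0 = 1$ on the right-hand side of (2), or one simply notes the statement is used only when $\E(X) > 0$). I would keep the write-up to three or four lines total, since the result is invoked as a black box in the proof of the group sieve lemma that follows.
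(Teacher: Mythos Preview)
Your proposal is correct and matches the paper's proof essentially line for line: the paper also derives (i) from Markov's inequality (called Chebychev there) and (ii) from the same splitting of $\E(X)$ followed by Cauchy--Schwarz on the indicator of $\{X \geq \E(X)/T\}$. Your remarks on $T\geq 1$ and the trivial case $\E(X)=0$ are fine but could be omitted, as the paper does.
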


\begin{proof} The first item is an instance of Chebychev's inequality: $$\P(X \geq T \cdot \E(X)) \cdot T \cdot \E(X) \leq \E(X 1_{X \geq T \cdot \E(X)}),$$
while the second follows from Cauchy-Schwarz:
$$(1- \frac{1}{T}) \E(X) \leq \E(X 1_{X \geq \frac{1}{T} \cdot \E(X)}) \leq \E(X^2)^{\frac{1}{2}} \P(X \geq \frac{1}{T} \E(X))^{\frac{1}{2}}$$
\end{proof}

Applying this lemma to the variable $X:=\sum_{i=1}^N 1_{A_i^c}$, ($A_i^c$ being the complement of the event $A_i$), we obtain:

\bigskip

\noindent {\bf Fact (exploiting pairwise almost independence) : } if $\{A_i\}_{1 \leq i \leq N}$ are $N$ events on a probability space, such that for some $\alpha, \delta>0$,
\begin{itemize}
\item  $\P(A_i) \leq 1-\omega$ for each $i=1,\ldots,N$, and

\item $\P(A_i \cap A_j) \leq \P(A_i)\P(A_j)+\delta$ for all $i \neq j$,

\end{itemize}
then

$$\P(\cap_{1 \leq i \leq N} A_i) \leq \frac{1}{\omega^2}(\delta + \frac{3}{N}).$$

\begin{proof} Indeed, $\P(A_i^c)= 1-\P(A_i) \geq \omega$, so $\E(X) \geq \omega N$ and by Lemma \ref{moment}
$$1- \P(\cap_1^N A_i) = \P(X \geq 1) \geq \P(X \geq \frac{1}{\omega N} \cdot \E(X)) \geq (1-\frac{1}{\omega N})^2 \frac{\E(X)^2}{\E(X^2)},$$
while $\E(X^2)= \sum_i \P(A_i^c) + \sum_{i \neq j} \P(A_i^c \cap A_j^c)$ and $\E(X)^2= \sum_i \P(A_i^c)^2 + \sum_{i \neq j} \P(A_i^c) \P(A_j^c)$. Hence using that $\P(A_i^c \cap A_j^c) \leq \P(A_i^c) \P(A_j^c) + \delta$,
$$\E(X^2) - \E(X)^2 \leq \sum_i \P(A_i^c)\P(A_i) + \delta N(N-1) \leq N(1- \omega) + \delta N^2,$$
from which we deduce (using that $\E(X) \geq N\omega$) that
$$1- \P(\cap_1^N A_i) \geq (1-\frac{1}{\omega N})^2 (1 - \frac{N(1-\omega) + \delta N^2}{(\omega N)^2}) \geq 1 - \frac{1}{\omega^2}(\delta + \frac{3}{N})$$ as desired.
\end{proof}

We can now complete the proof of the group sieve lemma (i.e. Lemma \ref{group-sieve}):

\bigskip

\begin{proof} Note that we may assume that $n$ is even, and thus replacing $S$ by $S^2$ if necessary we may assume that $S$ contains $1$. Then by the random walk characterization of $\eps$-expanders (Lemma \ref{rwchar}) we know that the random walk $S_n=Y_1\cdot\ldots \cdot Y_n$ is almost equidistributed in projection to each $\pi_i(\Gamma)$ as long as $n \geq C_{\eps} \log |\Gamma/N_i|$, hence as soon as $n \geq C_\eps D \log N$. In particular for all $x \in \pi_i(\Gamma)$:
\begin{equation}\label{eqqs}
|\P(\pi_i(S_n) = x) - \frac{1}{|\pi_i(\Gamma)|}| \leq \frac{e^{-n/C_\eps}}{|\pi_i(\Gamma)|^{10}}
\end{equation}
and for $i \neq j$, $x \in \pi_i(\Gamma)$ and $y \in \pi_j(\Gamma)$
\begin{equation}\label{eqqs2}
|\P((\pi_{i}(S_n),\pi_j(S_n)) = (x,y)) - \frac{1}{|\pi_i(\Gamma)| \cdot |\pi_j(\Gamma)|}| \leq \frac{e^{-n/C_\eps}}{|\pi_i(\Gamma)|^{10}|\pi_j(\Gamma)|^{10}}
\end{equation}

Let $A_i$ be the event ``$\pi_i(S_n) \in \pi_i(\mathcal{Z})$''. From $(\ref{eqqs})$ and $(\ref{eqqs2})$ we get for $i \neq j$
$$|\P(A_i) - \frac{|\pi_i(\mathcal{Z})|}{|\pi_i(\Gamma)|}| \leq \frac{e^{-n/C_\eps}}{|\pi_i(\Gamma)|^{9}},$$
$$|\P(A_j) - \frac{|\pi_j(\mathcal{Z})|}{|\pi_j(\Gamma)|}| \leq \frac{e^{-n/C_\eps}}{|\pi_j(\Gamma)|^{9}},$$
$$ |\P(A_i \cap A_j) - \frac{|\pi_i(\mathcal{Z})|}{|\pi_i(\Gamma)|}\frac{|\pi_j(\mathcal{Z})|}{|\pi_j(\Gamma)|}| \leq \frac{e^{-n/C_\eps}}{|\pi_i(\Gamma)|^{9}|\pi_j(\Gamma)|^{9}},$$
Hence
$$|\P(A_i \cap A_j) - \P(A_i)\P(A_j)|\leq 3e^{-n/C_\eps}$$
Recall further that by assumption $|\pi_i(\mathcal{Z})|/|\pi_i(\Gamma)| \leq 1-\alpha$ hence
$$\P(A_i) \leq 1- \alpha + e^{-n/C_\eps} \leq 1- \alpha/2,$$
for $n$ large enough. Setting $B(\eps,D,\alpha) = 10C_\eps D/\alpha^2$ (say), the lemma now follows by applying the Fact above with $\omega:=\alpha/2$, $\delta=3e^{-n/C_\eps}$.
\end{proof}

In the next subsection, we give an application of this group sieve lemma to a counting problem in infinite linear groups.

\bigskip

To conclude we note that the pairwise almost independence given by the assumption that the Cayley graphs of $\Gamma/(N_i \cap N_j) \simeq \Gamma/N_i \times \Gamma/N_j$ are expanders corresponds to the super-strong approximation theorem for products of two prime factors (i.e. when $r=2$ in Theorem \ref{severalp}). The result of Salehi-Golsefidy and Varj\'u \cite{salehi-golsefidy-varju} shows uniform expansion for an arbitrary (growing) number of prime factors. This corresponds to joint almost independence of the sequence $\pi_i(S_n)$ instead of pairwise. Clearly this is a much stronger property to have at one's disposal and it is crucial in the Affine Sieve of Bourgain-Gamburd-Sarnak \cite{bourgain-gamburd-sarnak} and Salehi-Golsefidy-Sarnak \cite{salehi-sarnak}.


\subsection{Proper powers in linear groups are scarce}

In \cite{lubotzky-meiri} Lubotzky and Meiri use the group sieve lemma (Lemma \ref{group-sieve}) above to establish the following result:

\begin{theorem}(Proper powers are exponentially small, \cite{lubotzky-meiri}) \label{properpowers} Under the assumptions of Theorem \ref{subvariety}, let $\mathcal{P}$ be the proper powers in $\Gamma$, i.e. the set of elements in $\gamma \in \Gamma$ such that there is $\gamma_0 \in \Gamma$ and $k \geq 2$ such that $\gamma=\gamma_0^k$.  Then $\mathcal{P}$ is exponentially small, namely there is $c>0$ such that for all $n \geq 1$,
$$\mu^n( \mathcal{P}) \leq e^{-cn}.$$
\end{theorem}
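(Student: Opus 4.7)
The plan is to combine the group sieve lemma (Lemma \ref{group-sieve}) with the subvariety estimate (Theorem \ref{subvariety}). Since every proper power is a $k$-th power for some prime $k$, I would write
$$\mathcal{P} = \bigcup_{k \text{ prime}} \Gamma^{[k]}, \qquad \Gamma^{[k]} := \{\gamma_0^k : \gamma_0 \in \Gamma\},$$
bound $\mu^n(\Gamma^{[k]})$ uniformly in $k$ for all primes $k \leq K_n := Cn$, and separately control the contribution of exponents $k > K_n$ by a height argument. As in the proof of Theorem \ref{subvariety}, one first reduces to the case where the Zariski closure $\G$ of $\Gamma$ is semisimple, simply connected, and defined over $\Q$, with $\Gamma$ Zariski-dense in it: modding out by the solvable radical preserves the $k$-th power condition, and a suitable specialization brings us to the $\Q$-setting.

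For each fixed prime $k$, I apply Lemma \ref{group-sieve} to $\mathcal{Z} := \Gamma^{[k]}$ with the family $N_{p_i} := \ker(\Gamma \to \G_{p_i}(\F_{p_i}))$, where $p_1,\ldots,p_N$ are $N$ distinct large primes chosen in the arithmetic progression $p \equiv 1 \pmod k$ (infinitely many by Dirichlet). The expansion hypothesis on $\Cay(\Gamma/(N_{p_i}\cap N_{p_j}), S \text{ mod }(N_{p_i}\cap N_{p_j}))$ is Theorem \ref{severalp} applied with $r=2$; the Goursat identity $\Gamma/(N_{p_i}\cap N_{p_j}) \simeq \G_{p_i}(\F_{p_i})\times \G_{p_j}(\F_{p_j})$ follows because super-strong approximation yields surjectivity onto the product and Goursat's lemma then supplies injectivity; and $|\Gamma/N_{p_i}| \leq p_i^{d^2}$ is automatic. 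The essential density input
$$|\pi_{p_i}(\Gamma^{[k]})| \leq (1-\alpha)\,|\pi_{p_i}(\Gamma)|$$
with $\alpha>0$ uniform in $k$ follows from the observation that, when $p_i \equiv 1 \pmod k$, a split maximal torus $T \cong (\F_{p_i}^{\times})^r$ of $\G_{p_i}$ has $k$-th powers of index $k^r \geq 2$ in $T$, and a positive-fraction lower bound on regular semisimple elements in $\G_{p_i}(\F_{p_i})$ propagates this gap to the whole group. Choosing $N = \lfloor e^{\delta n}\rfloor$ with $\delta>0$ sufficiently small, Lemma \ref{group-sieve} then produces $\mu^n(\Gamma^{[k]}) \leq 1/N \leq e^{-\delta n}$ with $\delta$ independent of $k$.

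The main obstacle is that summing this uniform bound over all primes diverges, so large exponents $k > K_n$ require a different argument. Call an element $g \in \G$ quasi-unipotent if $g^N$ is unipotent for some $N\geq 1$, equivalently if all its eigenvalues are roots of unity. Using the height-gap theorem of \cite{breuillard-annals} (or, more elementarily, Dobrowolski's bound on Mahler measures applied to the eigenvalues, which have degree at most $d$), one obtains $h(\gamma_0^k) \geq c_0 k$ whenever $\gamma_0 \in \Gamma$ is not quasi-unipotent. Combined with the bound $h(\gamma) \leq O(n)$ for all $\gamma \in S^n$ from $(\ref{submul})$, this forces $k \leq Cn$ whenever $\gamma_0^k \in S^n$ with $\gamma_0$ non-quasi-unipotent. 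Hence for $k > K_n := Cn$, every element of $\Gamma^{[k]} \cap S^n$ is a power of a quasi-unipotent element and therefore itself lies in the algebraic subvariety
$$\mathcal{V}_{\mathrm{qu}} := \{g \in \G : g^M \text{ is unipotent}\},$$
where $M = M(\Gamma)$ is the order bound on torsion in $\Gamma$ coming from Selberg's lemma applied to the semisimple parts. Since $\G$ is semisimple of positive rank, $\dim \mathcal{V}_{\mathrm{qu}} = \dim \G - \mathrm{rk}(\G) < \dim \G$, so Theorem \ref{subvariety} gives $\mu^n(\mathcal{V}_{\mathrm{qu}}) \leq c_0\, e^{-c'n}$.

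Combining the two bounds,
$$\mu^n(\mathcal{P}) \leq \sum_{k \text{ prime},\ k \leq K_n} \mu^n(\Gamma^{[k]}) + \mu^n(\mathcal{V}_{\mathrm{qu}}) \leq K_n\cdot e^{-\delta n} + c_0\, e^{-c'n} \leq e^{-cn}$$
for some $c>0$, yielding the theorem. The most delicate step is the uniform density lower bound for $\G_p(\F_p)\setminus \G_p(\F_p)^{[k]}$: one needs both the split-torus count (which is why $p\equiv 1 \pmod k$ is imposed) and a uniform lower bound on the proportion of regular semisimple elements, assembled to remain uniform in $k$.
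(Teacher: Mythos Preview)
Your approach is essentially the same as the paper's: split into small and large exponents via a height argument, treat large exponents by the quasi-unipotent subvariety and Theorem~\ref{subvariety}, and sieve each small exponent $k$ using primes $p\equiv 1\pmod k$ together with the split-torus count and Theorem~\ref{severalp} for $r=2$. Two minor differences are harmless: you restrict to prime exponents (a clean simplification, since every proper power is a $k$-th power for some prime $k$), and for the height bound on large exponents you invoke the height-gap theorem or Dobrowolski where the paper, working in $SL_d(\Z)$, uses the elementary observation that integer characteristic polynomials force any eigenvalue of modulus $>1$ to have modulus $\geq 1+\delta(d)$, together with Kronecker's lemma.

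There is, however, one genuine omission. When you write ``infinitely many by Dirichlet'' and then take $N=\lfloor e^{\delta n}\rfloor$ primes $p_i\equiv 1\pmod k$, the third hypothesis of Lemma~\ref{group-sieve} forces $|\Gamma/N_{p_i}|\leq N^D$, hence $p_i\leq N^{D/d^2}$. So you need roughly $N$ primes $\equiv 1\pmod k$ below $N^{O(1)}$, \emph{uniformly} over all moduli $k\leq Cn\approx (C/\delta)\log N$. Qualitative Dirichlet does not give this; one needs a quantitative form valid for moduli up to a power of $\log x$. The paper addresses this explicitly, quoting the effective estimate $\pi(x;m,1)=\frac{x}{\phi(m)}(1+O(e^{-O((\log x)^{1/5})}))$ uniform in $m\leq(\log x)^{3/2}$, which suffices here (and avoids the ineffectivity of Siegel--Walfisz). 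Without this step the constant $B=B(\eps,D,\alpha)$ in Lemma~\ref{group-sieve} cannot be kept independent of $k$, and the sum over $k\leq Cn$ does not close. A smaller point: your bound $M=M(\Gamma)$ on the order of the semisimple part should come not from Selberg's lemma (the semisimple part of $\gamma_0$ need not lie in $\Gamma$) but from the fact that roots of unity of bounded degree over $\Q$ have bounded order; the paper simply takes $M=d!$ in the $SL_d(\Z)$ case.
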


An old result of Malcev (see \cite{lennox} and references therein) says that for each $n \geq 1$, then set of $n$-th powers in any finitely generated nilpotent group contains a finite index subgroup, and thus cannot be exponentially small. So Theorem \ref{properpowers} can be seen as a strong quantitative converse to Malcev's theorem.  Prior attempts to prove this result, see Hrushovski-Kropholler-Lubotzky-Shalev \cite{hrushovski-kropholler-lubotzky}, could only go as far as proving that for each $k$, the set of $k$-powers in $\Gamma$ does not contain a finite index subgroup of $\Gamma$.

\bigskip

We sketch the proof in the special case when $\Gamma$ is a Zariski-dense subgroup of $SL_d(\Z)$.

\bigskip

\begin{proof} We want to apply the group sieve lemma to the subset $\mathcal{Z}:=\mathcal{P}$ of proper powers. The projection maps $\pi_i$ will be the reduction maps modulo large primes $p_i$ to be chosen carefully. By the strong approximation theorem (Theorem \ref{strongapp} above) $\Gamma$ maps onto $\SL_d(\F_p)$ for all large enough prime $p$.

In a finite group every element of order at least $3$ is a proper power, so we have to restrict attention to $m$-powers (i.e. elements in the image of the map $g \to g^m$) for each given $m$. Luckily we do not need to consider all $m$'s, but only those with $m  \leq C n$ for some $C=C(S)>0$. The reason is that if $\gamma \in SL_d(\Z)$ has an eigenvalue $\lambda$ of modulus $>1$, then it is of modulus $>1+\delta$ for some $\delta$ depending only on the dimension $d$ (indeed eigenvalues are roots of the characteristic polynomial, which has degree $d$ and integer coefficients: if all eigenvalues were say $\leq 2$ in modulus, then the coefficients would be bounded, leaving only finitely many possibilities for $\lambda$). So for every $m\geq 2$, $$||\gamma^m|| \geq |\lambda|^m \geq (1+\delta)^m,$$ while every element in the support of the measure $\mu^n$ has size at most $M_S^n$, where $M_S=\max\{||s||, s \in S\}$. So if an element $g$ in the support of $\mu^n$ is a proper power $\gamma_0^m$, then either $m=O(n)$ or $g$ has all its eigenvalues of modulus $1$. Kronecker's lemma tells us that if the roots of a monic polynomial of degree $d$ in $\Z[X]$ have all modulus $1$, they must be roots of unity of degree at most $d$. Hence $g^{d!}$ must be a unipotent element, i.e. $(g^{d!}-1)^d=0$. However $\mathcal{V}:=\{g \in SL_d; g^{d!} \textnormal{ is unipotent }\}$ is a proper algebraic subvariety of $SL_d$, and hence Theorem \ref{subvariety} tells us that this set is exponentially small and can be ignored. It follows that
$$\mu^n(\mathcal{P}) \leq \sum_{m\leq C(S) n} \mu^n\{\mathcal{P}_m\} + O(e^{-cn})$$
where $\mathcal{P}_m$ is the set of $m$-powers. We will then apply the group sieve lemma to each $\mathcal{P}_m$ separately.

Now given $m\geq 2$, how many $m$-powers are there in $SL_d(\F_p)$ ? If $m$ is co-prime to the order of $SL_d(\F_p)$, then
every element is an $m$-power. So we wish to choose $p$ in such a way that there are not too many $m$-powers. For example, assume that $p \equiv 1$ mod $m$, so that $m$ divides the order of the multiplicative group of $\F_p$, which is a cyclic group of order $p-1$. In $\Z/(p-1)\Z$ there are precisely $\frac{p-1}{m}$ multiples of $m$. So there are exactly $(\frac{p-1}{m})^{d-1}$ $m$-powers in the subgroup of $SL_d(\F_p)$ made of diagonal matrices, which is a subgroup isomorphic to $(\Z/(p-1)\Z)^{d-1}$. In particular at least $\frac{(p-1)^{d-1}}{2}$ of the diagonal matrices are not $m$-powers. Among them at most $(p-1)^{d-2}$ have two identical diagonal entries, i.e. at least $\frac{(p-1)^{d-1}}{3}$ of them (for $p$ large) have distinct eigenvalues and thus a centralizer which is as small as possible, that is equal to the full diagonal group. In each conjugacy class of such a diagonal matrix, there are no more than $d!$ other such matrices. Taking the union of the conjugacy classes of these elements thus yields at least $|SL_d(\F_p)|/3d!$ different elements that are not $m$-powers. Thus we have shown that for large $p$ and any $m \geq 2$ with $p \equiv 1$ mod $m$

$$|\{m\textnormal{-powers in } SL_d(\F_p)| \leq (1-\frac{1}{3d!})|SL_d(\F_p)|$$

 To apply Lemma \ref{group-sieve} need now choose a sequence of distinct primes $\{p_i\}_{i=1,\ldots,N}$ with $N$ of exponential size in $n$. We choose one sequence of primes for each $m \leq Cn$. Dirichlet's theorem ensures that there are infinitely many primes congruent to $1$ mod $m$. More follows from the proof: there is in fact a positive density of such primes among the primes. 
However we need a uniform estimate as $m$ is allowed to vary from $1$ to $n$, while the primes we sieve with will be of exponential size in $n$. We need that there are exponentially many primes of exponential size congruent to $1$ mod $m$ uniformly in $m \leq C n$. So one needs a fairly precise quantitative version of Dirichlet's theorem: we need to know that the number $\pi(x;m,1)$ of primes congruent to $1$ mod $m$ and less than $x$ is at least say $\sqrt{x}$ \emph{uniformly} over all moduli $m \leq \log x$. The Siegel-Walfisz theorem says that the prime number theorem in arithmetic progressions is accurate uniformly for values $m$ going up to $(\log x)^A$ for any given $A\geq 1$. But it is non-effective in the sense that the first $x$ for which the estimate begins to be meaningful is not explicitly computable in terms of $A$ due to the possible presence of Siegel zeros. In our case, we need only a much weaker lower bound on the number of such primes and the estimate
$$\pi(x;m,1) = \frac{x}{\phi(m)}(1 + O(e^{-O((\log x)^{1/5})}))$$
holds uniformly for all $m\leq (\log x)^{3/2}$ with effective implied constants in the big $O$'s, where $\phi(m)$ denotes the Euler function (see $(7)$ on page 123 of Davenport's book \cite{davenport}). In particular $\pi(x;m,1) \geq \sqrt{x}$ for all $m \leq (\log x)^{3/2}$ and $x$ large enough.

We can now finish the proof of Theorem \ref{properpowers} (in our special case of Zariski-dense subgroups of $SL_d(\Z)$). Let $B=B(\eps,D,\alpha)>0$ be the constant from the group sieve lemma (Lemma \ref{group-sieve}). Set $\alpha=1/3d!$, $D=2d^2$, and $\eps=\eps(S)>0$ is given by the super-strong approximation theorem (Theorem \ref{severalp} for $r=2$ primes). Given a large $n$, and some $m\leq C(S)n$, by the above there are at least $\sqrt{x}$ distinct primes congruent to $1$ mod $m$ and smaller than $x:=e^{2n/B}$. Pick a subset of roughly $N = e^{n/B}$ of them, and apply Lemma \ref{group-sieve} to conclude that
$$\mu^n(\mathcal{P}_m) \leq e^{-n/B}$$
for each $m \leq C(S)n$. The result follows.
\end{proof}

\noindent \emph{Remark.} In the proof we used an effective version of the prime number theorem in progressions as opposed to the Siegel-Walfisz theorem, which is non-effective. This has only some sense if all other constants involved are indeed effective. The expander constant $\eps>0$ depends on the approximate subgroup constant $\delta$ from Proposition \ref{machine}. It is effective since all the algebraic geometry bounds used in Section \ref{approxsec} are effective, although not really explicit (see in particular \cite{pyber-szabo} where an attempt has been made to make some of these constants more explicit). Finally the first prime starting from which the super-strong approximation theorem holds is also effective as it relies on Nori's theorem (see the appendix of \cite{salehi-golsefidy-varju}) although far from explicit. So it is fair to say that the rate of exponential decay in Theorem \ref{properpowers}, though effective, is far from explicit.

\subsection{The generic Galois group is the Weyl group}

Given a matrix in $\SL_d(\Z)$, one may look at its characteristic polynomial and ask if it is irreducible over $\Q$. This amounts to say that the Galois group of the polynomial acts transitively on the roots. More generally when is the Galois group equal to the full group of all permutations of the roots ? when is it only a proper subgroup ?

Prasad and Rapinchuk \cite{prasad-rapinchuk-MRL} have shown that given a Zariski-dense subgroup $\Gamma$ of $\SL_d(\Z)$, the subset of elements in $\Gamma$ whose characteristic polynomial is irreducible, or even has full Galois group, is itself Zariski-dense in $\Gamma$, and even contains an entire coset of a certain finite index subgroup (see \cite[Remark 6]{prasad-rapinchuk-MRL}). They proved their result in a much greater generality (for an arbitrary semisimple group) and we refer the reader to \cite{prasad-rapinchuk-IHES} and to the excellent surveys \cite{prasad-rapinchuk-AMS} and \cite[\S 9]{prasad-rapinchuk-MSRI} for a description of their work and several further interesting results on how to find many elements in $\Gamma$ with various constraints on their characteristic polynomial.

Their method is also based on the study of the mod $p$ quotients of $\Gamma$. By Jordan's lemma (see below Lemma \ref{jlem}), the Galois group is maximal if and only if it has elements from every conjugacy class of the symmetric group. It is thus enough to find one prime number per conjugacy class for which the associated Frobenius element modulo $p$ is in that conjugacy class.

The same idea, this time combined with the group sieve lemma (Lemma \ref{group-sieve}) and the super-strong approximation theorem (Theorem \ref{ssapp}), can be applied to show the following somewhat stronger result, due to Jouve, Kowalski and Zywina \cite{jouve-kowalski}, which asserts that, the set of elements in $\Gamma$ whose characteristic polynomial is not all of $\frak{S}_d$ is exponentially small in the above sense of random walks: the probability that a random walk at time $n$ hits this subset decays exponentially with $n$. Note that combined with Theorem \ref{subvariety}, this also implies that the subset of elements in $\Gamma$ with full Galois group is Zariski-dense.

\begin{theorem}\label{sl}Let $d \geq 2$ and $\Gamma=\langle S \rangle \leq SL_d(\Z)$ be a Zariski-dense subgroup of $SL_d$. Let as above $\mu_S$ denote the uniform probability measure on the symmetric set $S$. Then there is $c=c(S)>0$ such that for all $n \geq 1$,
$$\mu_S^n(\{ \gamma \in \Gamma, Gal(\gamma) \neq \frak{S}_{d}\}) \leq e^{-c n }.$$
 Here $Gal(\gamma)$ denotes the Galois group of the extension $K_\gamma | \Q$, where $K_\gamma$ is the splitting field of the characteristic polynomial of $\gamma$ and $\frak{S}_d$ denotes the symmetric group of all permutations of $d$ elements. In particular
$$\mu_S^n(\{ \gamma \in \Gamma, \pi_\gamma \textnormal{ not }\Q\textnormal{-irreducible}\}) \leq e^{-c n }$$
\end{theorem}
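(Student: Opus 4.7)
My plan is to combine Jordan's lemma with the classical Frobenius/cycle-type correspondence and apply the group sieve lemma (Lemma \ref{group-sieve}). Jordan's lemma --- no proper subgroup of a finite group meets every conjugacy class --- applied to $G_\gamma := \mathrm{Gal}(K_\gamma|\Q) \subseteq \mathfrak{S}_d$ reduces the theorem to showing, for each cycle type (partition of $d$) $c$ individually, an exponential bound $\mu_S^n(\mathcal{Z}_c) \leq e^{-c'n}$, where
$$\mathcal{Z}_c := \{\gamma \in \Gamma : G_\gamma \text{ contains no element of cycle type } c\}.$$
The set of cycle types is finite and depends only on $d$, so the theorem will follow by summing over $c$.

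The bridge to mod $p$ reductions is the classical fact that, for $\gamma \in \SL_d(\Z)$ and any prime $p \nmid \mathrm{disc}(\chi_\gamma)$, the factorization type of $\chi_\gamma$ in $\F_p[X]$ equals the cycle type of a Frobenius element at $p$, which is an element of $G_\gamma$. The condition $p \nmid \mathrm{disc}(\chi_\gamma)$ is equivalent to $\pi_p(\gamma) \in \SL_d(\F_p)$ being regular semisimple. Setting
$$A_p^c := \{g \in \SL_d(\F_p) : \chi_g \text{ is separable of factorization type } c\},$$
I get the uniform implication $\gamma \in \mathcal{Z}_c \Rightarrow \pi_p(\gamma) \notin A_p^c$ for \emph{every} prime $p$: either $\bar\gamma$ fails to be regular semisimple (and so is not in $A_p^c$ for free), or the Frobenius/cycle-type obstruction kicks in. In either case, $\pi_p(\mathcal{Z}_c) \subseteq \SL_d(\F_p) \setminus A_p^c$.

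The main quantitative algebraic input is then a density estimate: there is a constant $\alpha = \alpha(d,c) > 0$ such that $|A_p^c| \geq \alpha |\SL_d(\F_p)|$ for all sufficiently large primes $p$. This is a standard Lang--Weil-type count: among separable monic polynomials of degree $d$ in $\F_p[X]$ with constant term $(-1)^d$, the proportion having factorization type $c$ tends to $|c|/d! > 0$ as $p \to \infty$ (where $|c|$ is the number of permutations of $\mathfrak{S}_d$ of cycle type $c$), and each such polynomial is the characteristic polynomial of a single regular semisimple $\SL_d(\F_p)$-conjugacy class of size comparable to $|\SL_d(\F_p)|/p^{d-1}$. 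Multiplying yields the claim.

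Finally I apply Lemma \ref{group-sieve} to the normal subgroups $N_i := \ker(\pi_{p_i} : \Gamma \to \SL_d(\F_{p_i}))$, where $p_1 < \dots < p_N$ are $N := \lceil e^{c_0 n}\rceil$ distinct large primes of size at most $N^{D/d^2}$, with $D := d^2$; for $c_0$ small enough the prime number theorem provides such a family. Condition (iii) ($|\Gamma/N_i| \leq p_i^{d^2} \leq N^D$) and condition (iv) (with the $\alpha$ above) are immediate, while conditions (i) and (ii) are exactly what Theorem \ref{severalp} (super-strong approximation for $r=2$ primes) supplies directly, namely surjectivity of $\Gamma \to \SL_d(\F_{p_i}) \times \SL_d(\F_{p_j})$ and $\eps(S)$-expansion of the product Cayley graph. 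Lemma \ref{group-sieve} then gives $\mu_S^n(\mathcal{Z}_c) \leq 1/N = e^{-c_0 n}$ for $n$ large, and summing over the finitely many $c$ proves the theorem. The ``in particular'' assertion on $\Q$-irreducibility of $\chi_\gamma$ is then automatic, since $\chi_\gamma$ is $\Q$-irreducible iff $G_\gamma$ acts transitively on the roots, and $G_\gamma = \mathfrak{S}_d$ is transitive. The main obstacle is precisely the uniform-in-$p$ density estimate for $|A_p^c|$: it is classical but is the only step that genuinely needs algebraic geometry beyond what the sieve lemma and super-strong approximation already package.
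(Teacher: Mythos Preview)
Your proof is correct and follows essentially the same route as the paper: Jordan's lemma to decompose into cycle types $c$, the Frobenius/factorization-type correspondence to push $\mathcal{Z}_c$ into $\SL_d(\F_p)\setminus A_p^c$, a uniform density lower bound for $A_p^c$, and then the group sieve lemma fed by super-strong approximation for pairs of primes. The paper obtains the density bound by an explicit companion-matrix/torus construction (yielding $\alpha = 1/(2d!)$) rather than your polynomial-counting heuristic, but the two arguments are equivalent in strength.

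One small arithmetic slip to fix: with your choice $D = d^2$ you are asking for $N$ primes of size at most $N^{D/d^2} = N$, but the prime number theorem only supplies $\sim N/\log N$ primes below $N$, so no such family exists. Take $D = 2d^2$ (or $3d^2$, as the paper does), so that the primes may range up to $N^2$; then $\pi(N^2) \gg N$ and condition (iii) still holds since $|\SL_d(\F_{p_i})| \leq p_i^{d^2} \leq N^{2d^2} = N^D$.
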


We also refer the reader to the earlier work of Rivin \cite{rivin,rivin-zariski} for related statements and generalizations to other geometric contexts. And to the subsequent work of Gorodnik and Nevo \cite{gorodnik-nevo}, which proves a similar result (for arithmetic groups only) when counting with respect to a height function of $M_d(\Z)$ instead of the random walk average.

Theorem \ref{sl}  was proved by Jouve, Kowalski and Zywina \cite{jouve-kowalski} in the special case when $\Gamma$ has finite index in $SL_d(\Z)$.  When \cite{jouve-kowalski} was written the super-strong approximation theorem was still in limbo. Now that we have Theorems \ref{ssapp} and \ref{severalp} at our disposal, we can use them  in the argument from \cite{jouve-kowalski} and the whole proof goes through verbatim yielding Theorem \ref{sl} above. We give below the complete proof (see also \cite{lubotzky-rosenzweig}).

Jouve, Kowalski and Zywina proved their result in the wider generality of arithmetic subgroups of arbitrary connected semisimple groups (see below). Likewise, combined with the super-strong approximation, their argument extends to all Zariski-dense subgroups. It remains an open problem however to extend the Gorodnik-Nevo result to Zariski-dense subgroups.

In \cite{lubotzky-rosenzweig} Lubotzky and Rosenzweig extended these results to cover also non-connected semisimple algebraic groups and showed the interesting phenomenon that each coset of the connected component has its own generic Galois group, which may be different from the Weyl group of the connected component.

\bigskip
We now pass to the proof of Theorem \ref{sl}.

\bigskip

\begin{proof} The method is based on the following classical lemma of Jordan:

\begin{lemma}(Jordan)\label{jlem} Let $G$ be a finite group and $H$ a subgroup. If $H$ is a proper subgroup of $G$, then some conjugacy class of $G$ is disjoint from $H$.
\end{lemma}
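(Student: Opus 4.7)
The plan is to give the classical counting argument: show that the union of all $G$-conjugates of $H$ cannot cover all of $G$ when $H$ is a proper subgroup, and then observe that an element missed by this union has its entire conjugacy class disjoint from $H$.

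First I would fix the observation that an element $g \in G$ lies in some conjugate $aHa^{-1}$ of $H$ if and only if the conjugacy class of $g$ meets $H$. Consequently, proving that $\bigcup_{a \in G} aHa^{-1} \subsetneq G$ immediately furnishes the desired conjugacy class disjoint from $H$, namely the class of any element outside this union.

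Next I would bound the size of the union. Since the map $a \mapsto aHa^{-1}$ factors through the coset space $G/N_G(H)$, the number of distinct conjugates of $H$ is $[G:N_G(H)]$. Any two conjugates share the identity, so
\[
\Bigl| \bigcup_{a \in G} aHa^{-1} \Bigr| \leq [G:N_G(H)] \cdot (|H|-1) + 1.
\]
Using $H \leq N_G(H)$, we have $[G:N_G(H)] \leq [G:H]$, hence the union has cardinality at most $[G:H](|H|-1)+1 = |G| - [G:H] + 1$. Since $H$ is proper, $[G:H] \geq 2$, and this quantity is at most $|G|-1 < |G|$.

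The argument has no real obstacle; the only subtlety to flag is the passage from $G$ to $N_G(H)$ when counting the conjugates (one must not overcount by taking $a$ to range over $G$ with multiplicity), together with the elementary but essential fact that all conjugates of $H$ share at least the identity element, which is what makes the bound $[G:N_G(H)](|H|-1)+1$ (rather than $[G:N_G(H)]|H|$) valid and thus yields a strict inequality.
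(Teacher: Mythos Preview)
Your counting argument is correct and is one of the two standard proofs of this classical lemma. The paper itself does not give a proof: it merely reformulates the statement (via the action of $G$ on $G/H$) as the assertion that every transitive subgroup of $\frak{S}_d$ with $d\geq 2$ contains a fixed-point-free permutation, and then refers the reader to Serre's note. That reformulation is exactly your observation that $g$ lies in some conjugate $aHa^{-1}$ iff $g$ fixes the coset $aH$; from there the usual alternative argument is Burnside's lemma (the average number of fixed points in a transitive action equals $1$, while the identity fixes $d\geq 2$ points, so some element fixes none), whereas you instead bound $|\bigcup_a aHa^{-1}|$ directly. Both routes are short and equivalent in strength; yours has the minor advantage of being entirely self-contained without invoking the orbit-counting formula.
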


In other words, the only subgroup of $G$ intersecting every conjugacy class is $G$ itself. Looking at the action by left translations on the set of left cosets $G/H$, we see that the lemma is equivalent to the following assertion: every transitive subgroup of $\frak{S}_d$ ($d \geq 2$) must contain a permutation with no fix points. For the proof of this simple lemma and a number of pretty applications to number theory, we refer the reader to Serre's short note \cite{serre-jordan-note}.

We will apply this lemma with $G=\frak{S}_d$ and $H=Gal(\gamma)$. Set $\mathcal{Z}:=\{\gamma \in \Gamma; Gal(\gamma) \neq \frak{S}_d\}$ and $\mathcal{Z}_C:=\{\gamma \in \Gamma; Gal(\gamma) \cap C = \varnothing\}$, where $C$ denotes a conjugacy class in the symmetric group $\frak{S}_d$. A conjugacy class $C$ of $\frak{S}_d$ is given by a partition of $d$ as $d=d_1+\ldots+d_k$ for integers $d_i \geq 1$. Jordan's lemma then tell us that
$$\mathcal{Z}= \bigcup_C \mathcal{Z}_C,$$
where the union ranges over all conjugacy classes of $\frak{S}_d$. Thus for proving Theorem \ref{sl} it will suffice to show that each $\mathcal{Z}_C$ is exponentially small. We will apply the group sieve lemma (Lemma \ref{group-sieve} above) to show precisely this.

\bigskip

As is well-known, to every prime $p$ not dividing the discriminant of $\pi_\gamma$, one can associate a particular conjugacy class of $Gal(\gamma)$, the Frobenius conjugacy class $Frob_p(\pi_\gamma)$. The prime ideals above $p$ in the splitting field $K_\gamma$ are permuted transitively by $Gal(\gamma)$. Each stabilizer subgroup is in bijection with the Galois group of the reduced polynomial $\pi_\gamma$ mod $p$ in $\F_p[X]$, which is a cyclic group generated by the Frobenius element mapping $x$ to $x^p$ in the corresponding residue field extension $\F_p[X]/(\pi_\gamma \textnormal{ mod }p)$.
 The corresponding elements in each stabilizer (decomposition) subgroup form the conjugacy class $Frob_p(\pi_\gamma)$ in $Gal(\gamma)$. The Frobenius element permutes the roots of $\pi_\gamma$ mod $p$ and its decomposition into a product of disjoint cycles corresponds to the factorization
$$ \pi_{\gamma \textnormal{ mod } p} = \pi_\gamma \textnormal{ mod } p  = P_1 \cdot \ldots \cdot P_k$$
into irreducible polynomials in $\F_p[X]$ with one cycle of length $\deg(P_i)$ for each $i=1,\ldots,k$. It determines a conjugacy class of $\frak{S}_d$ identified by the partition of $d$ given by $d=\deg(P_1)+\ldots+\deg(P_k)$.

\bigskip

Let $C$ be a conjugacy class of $\frak{S}_d$ determined by a partition $d=d_1+\ldots +d_k$ of $d$. From the above discussion, we see that if  $\gamma \in \mathcal{Z}_C$  and $p$ is a prime, then either the discriminant of $\pi_\gamma$ is divisible by $p$ and $\gamma$ mod $p$ has a multiple eigenvalue, or $\gamma$ mod $p$ is contained in the set of elements $g \in SL_d(\F_p)$ whose characteristic polynomial is without multiple roots (i.e. $g$ is regular semisimple) and whose factorization into irreducible polynomials in $\F_p[X]$ determines a partition of $d$ different from the partition associated to $C$.

The set of elements with a multiple eigenvalue (i.e. non regular semisimple elements) forms a proper subvariety of $SL_d$ of bounded degree (it is defined by the vanishing of the $\gcd$ of the characteristic polynomial and its derivative). The Lang-Weil bound, or the easier Schwarz-Zippel estimate (see \cite{bggt}), allows us to assert that this set has size $O(p^{d^2-2})$, while $SL_d(\F_p)$ has size at least $\Omega(p^{d^2-1})$, and is thus negligible. Consider now the second set.

\bigskip

To apply the group sieve lemma (Lemma \ref{group-sieve}) to the set $\mathcal{Z}_C $, it thus remains to show a uniform upper bound on the proportion of $SL_d(\F_p)$ the set of such elements can occupy. Or, equivalently, to prove a uniform lower bound on the size of the set  $\Omega_{p,C}$ of regular semisimple elements in $SL_d(\F_p)$ whose characteristic polynomial admits a factorization of the form dictated by the partition of $d$ associated to $C$.

It is easy to obtain such a lower bound. Every monic polynomial with constant term $(-1)^d$ is the characteristic polynomial of some matrix in $SL_d(\F_p)$, e.g. the companion matrix of the polynomial. So, given $C$, just pick a polynomial whose irreducible factors are distinct and whose degrees $d_i$'s are such that $d=d_1+\ldots+d_k$ is the partition associated to $C$. Let $g$ be the associated companion matrix. It belongs to $\Omega_{p,C}$ and so do all its conjugates.  It is a regular semisimple element of $SL_d(\F_p)$ and thus it belongs to a unique maximal torus $T$. All other regular semisimple elements in $T$ have the same associated partition of $d$, because they generate the same commutative subalgebra of matrices over $\F_p$. It follows that $\Omega_{p,C}$ contains $\cup_{g \in SL_d(\F_p)} gT^{reg} g^{-1}$, where $T^{reg}$ denotes the subset of regular elements in $T$ (i.e. with distinct eigenvalues). Hence

$$|\Omega_{p,C}| \geq \frac{|SL_d(\F_p)|}{|N(T)/T|)} - |\{g \in SL_d(\F_p) ; g \textnormal{ not regular semisimple }\}|,$$
where $N(T)$ is the normalizer of $T$. Now $N(T)/T$ is the Weyl group of $SL_d$, thus isomorphic to $\frak{S}_d$. As already mentioned the set of non regular semisimple elements in $SL_d(\F_p)$ is negligible (being of size $O(|SL_d(\F_p)|/p)$). Hence $|\Omega_{p,C}| \geq \frac{1}{2d!}|SL_d(\F_p)|$ say when $p$ is large enough.

\bigskip

To conclude the proof of Theorem \ref{sl}, it remains to apply the group sieve lemma (Lemma \ref{group-sieve}) to the sets $\mathcal{Z}_C$ for each conjugacy class $C$ of $\frak{S}_d$ and to the group $\Gamma$ with projection homomorphisms $\pi_i$ given by the reduction modulo $N$ primes $p_i$ of size at most  $N^2$ say, where $N$ is chosen of size $e^{n/B}$, with $B=B(\eps,D,\alpha)>0$ is the constant given by Lemma \ref{group-sieve} with $D:=3d^2$, $\alpha:= 1/2d!$ say, and $\eps=\eps(S)>0$ is given by the super-strong approximation theorem for two primes (Theorem \ref{severalp}). This ends the proof.

\end{proof}

In their paper Jouve, Kowalski and Zywina prove (the correct modified version of) Theorem \ref{sl} in the more general setting where the ambient group is a connected semisimple algebraic group defined (and not necessarily split) over a number field. Again while they treated only arithmetic subgroups, because the super-strong approximation theorem was not available to them, their method extends and applies to all Zariski dense subgroups. This was worked out by Lubotzky and Rosenzweig \cite{lubotzky-rosenzweig}, who also described in full the most general situation, when the field of definition is only assumed to be finitely generated over $\Q$ and, most interestingly, the algebraic group may not be connected nor semisimple. Without reaching out for the greatest generality, we will only state their theorem for split connected semisimple groups defined over a field of characteristic zero. In order to do so we first give some background on the Galois action on tori (see also \cite{prasad-rapinchuk-MSRI}, \cite{jouve-kowalski}).

\bigskip

Let the ambient group $\G$ be a connected semisimple algebraic group defined and split over some finitely generated field $K$ of characteristic zero. This means that $\G$ admits a maximal torus $T_0$ which is defined and diagonalizable in any linear representation of $\G$ over $K$. To every regular semisimple element $g$ in $\G(K)$ corresponds the unique maximal $K$-torus $T_g$ it contains. A priori $T_g$ is not diagonalizable over $K$, but there is a smallest finite extension of $K$, the splitting field $K_{T_g}$ of $T_g$ such that $T_g$ is conjugate over $K_{T_g}$ to the $K$-split (i.e. diagonalizable) torus $T_0$. The Galois group $Gal(g)$ of the extension $K_{T_g}|K$ acts on the group $X(T_g)$ of characters of $T_g$. The group $X(T_g)$ is the free abelian group of rank $r=rank(\G)$ made of algebraic homomorphisms from $T_g$ to the multiplicative group $\G_m$. The Galois action of $Gal(g)$ on $X_{T_g}$ is via the formula $$\sigma(\chi(t)) = \prescript{\sigma}{}\chi ( \sigma(t)).$$
This action is faithful and thus $Gal(g)$ can be viewed as a finite subgroup of $Aut(X(T_g)) \simeq \GL_r(\Z)$.

 The Weyl group $W(T_g):=N(T_g)/Z(T_g)$ of $T_g$, where $N(T_g)$ is the normalizer and $Z(T_g) = T_g$ the centralizer of $T_g$, can also be viewed as a subgroup of $Aut(X(T_g))$ using the action by conjugation of the normalizer $N(T_g)$, namely $$\chi \mapsto (t \mapsto \chi(n^{-1}tn)),$$ for $n \in N(T_g)$ and $t \in T_g$.

Under the identification, it turns out that $Gal(g)$ becomes a subgroup of the Weyl group $W(T_g)$:  indeed fixing a $K$-split maximal torus $T_0$, there is an element $x \in \G(\overline{K})$ such that $T_g=xT_0x^{-1}$, because all maximal tori are conjugate over the algebraic closure $\overline{K}$ of $K$. Now from the fact that $T_g$ is defined over $K$, we see that $n_\sigma:=\sigma(x)x^{-1}$ belongs to $N(T_g)$, and that $\prescript{\sigma}{}\chi(t)= \chi(n_\sigma^{-1} t n_\sigma)$ for all $t \in T_g$. Recall that the isomorphism class of $W(T)$ is independent of $T$, it is the Weyl group $W_\G$ of $\G$. When $\G=SL_d$, then $W_\G \simeq \frak{S}_d$.

We can now state the theorem of Jouve, Kowalski and Zywina \cite{jouve-kowalski} in the version proved by Lubotzky and Rosenzweig \cite{lubotzky-rosenzweig} (i.e. for Zariski-dense subgroups over  fields of characteristic zero and not merely arithmetic groups over number fields).

\begin{theorem}\label{galois} Let $\G$ be a connected semisimple algebraic group defined and split over $K$, a finitely generated field extension of $\Q$. Suppose $\Gamma \leq \G(K)$ is a Zariski-dense subgroup and $\mu$ a symmetric probability measure whose support is a finite generating subset of $\Gamma$. Then there is $c>0$ such that
$$\mu^n( \gamma \in \Gamma ;  Gal(\gamma) \lneq W(T_\gamma)) \leq e^{-cn}.$$
\end{theorem}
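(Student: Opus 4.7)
The plan is to follow the proof of Theorem \ref{sl} step by step, with the symmetric group $\frak{S}_d$ replaced throughout by the abstract Weyl group $W_\G$, and with the combinatorial classification of factorization types of characteristic polynomials over $\F_p$ replaced by the classification of $\F_p$-forms of maximal tori of $\G_p$ by $W_\G$-conjugacy classes. The ingredients I would establish in order are (1) a reduction to the number field case, (2) a Jordan-type decomposition of the bad set, (3) an identification of the local Frobenius with a $W_\G$-conjugacy class, (4) a uniform density estimate for each such class, and (5) a final application of the group sieve lemma.

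For (1), since $\Gamma$ is finitely generated the field $K$ may be assumed finitely generated over $\Q$. A specialization argument of Hilbert-irreducibility type, as carried out in Lubotzky-Rosenzweig \cite{lubotzky-rosenzweig}, produces a $\Q$-specialization preserving both the Zariski closure of $\Gamma$ and the Galois group of a Zariski-generic element. Restriction of scalars then reduces to $K=\Q$, and lifting to the simply connected cover is harmless since all statements are modulo bounded index (cf.\ Theorem \ref{severalp}). After this we may assume $\Gamma \leq \G(\Z[1/q_0])$ with $\G$ simply connected semisimple over $\Q$. I expect this reduction to be the main technical obstacle beyond the $SL_d$ case; the remaining steps are direct analogues of the argument given for Theorem \ref{sl}.

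For (2) and (3), Jordan's lemma (Lemma \ref{jlem}) applied to $W_\G$ decomposes the bad set as $\bigcup_C \mathcal{Z}_C$, where $C$ ranges over the finitely many conjugacy classes of $W_\G$ and $\mathcal{Z}_C := \{\gamma : Gal(\gamma) \cap C = \varnothing\}$; it therefore suffices to bound each $\mu^n(\mathcal{Z}_C)$ separately. For a regular semisimple $\gamma$ and a good prime $p$ at which $\gamma \bmod p$ remains regular semisimple, the Frobenius class at $p$ sits inside $Gal(\gamma) \subset W(T_\gamma)$ and equals the $W_\G$-type of the $\F_p$-torus $T_{\gamma \bmod p}$: writing $T_{\gamma \bmod p} = xT_0x^{-1}$ with $T_0$ a fixed $\F_p$-split torus and $x \in \G(\overline{\F_p})$, the element $\operatorname{Frob}_p(x)x^{-1}$ lies in $N(T_{\gamma \bmod p})$, and its image in $W_\G$ represents the Frobenius class. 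Consequently $\gamma \in \mathcal{Z}_C$ forces $\gamma \bmod p$ to lie outside $\Omega_{p,C}$, the set of regular semisimple elements of $\G_p(\F_p)$ whose centralizing torus is of $W_\G$-type $C$; the subvariety of non-regular-semisimple elements of $\G_p$ has positive codimension and bounded complexity, and by Theorem \ref{subvariety} contributes only an $e^{-cn}$ error that may be absorbed.

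For (4), a classical computation in the theory of finite groups of Lie type shows that $|\Omega_{p,C}| \geq \alpha_C |\G_p(\F_p)|$ for some $\alpha_C > 0$ independent of $p$ (for $p$ large): conjugacy classes of maximal $\F_p$-tori of $\G_p$ are in bijection with conjugacy classes of $W_\G$ via Galois descent, a representative $T_C$ satisfies $|T_C(\F_p)| = p^{\operatorname{rank} \G}(1+O(1/p))$, and $|N(T_C)(\F_p)/T_C(\F_p)| = |C_{W_\G}(c)|$ for $c \in C$, so the union of $\G_p(\F_p)$-conjugates of $T_C^{\mathrm{reg}}$ occupies a proportion $|C|/|W_\G| + O(1/p)$ of $\G_p(\F_p)$. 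Step (5) is then the group sieve lemma (Lemma \ref{group-sieve}) applied to each $C$ separately using $N \approx e^{n/B}$ primes $p_i$ of size $\leq N^2$ avoiding the finitely many bad primes, with the pairwise expander property of $\G_{p_i}(\F_{p_i}) \times \G_{p_j}(\F_{p_j})$ supplied by Theorem \ref{severalp} (for two primes) and the product splitting by strong approximation; this yields $\mu^n(\mathcal{Z}_C) \leq e^{-cn}$, and summing over the finitely many $C$ completes the proof.
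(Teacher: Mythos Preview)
Your proposal is correct and follows essentially the same route as the paper's own argument: the paper's proof of Theorem \ref{galois} is only a brief sketch saying to mimic the $SL_d$ case (Theorem \ref{sl}) after a specialization to number fields \`a la Lubotzky--Rosenzweig and a restriction of scalars to $\Q$, and your steps (1)--(5) are precisely the natural fleshing-out of that sketch. One small comment on step (3): invoking Theorem \ref{subvariety} to discard non-regular-semisimple $\gamma$ is fine, but the paper's handling in the $SL_d$ case is slightly different and arguably cleaner---it absorbs the non-regular-semisimple locus \emph{mod $p$} into the complement of $\Omega_{p,C}$ at each prime (where it occupies only an $O(1/p)$ fraction by Schwarz--Zippel), so no separate global argument is needed.
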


Here again, this implies (via Theorem \ref{subvariety}) that the set of elements $\gamma$ with $Gal(\gamma)=W(T_\gamma)$ is Zariski-dense in $\Gamma$, a fact first established by Prasad and Rapinchuk in \cite{prasad-rapinchuk-MRL}.

The proof of Theorem \ref{galois} follows the same sieving argument as in the special case of subgroups of $SL_d(\Z)$ presented above. Using a specialization argument Lubotzky and Rosenzweig reduce to the case when $K$ is a number field. Then the group sieve lemma together with the super-strong approximation theorem (applied to the reduction of scalars of $\G$ from $K$ to $\Q$, see Remark \ref{numberfields}) apply in a similar way.

If $\G$ is not split over the base field $K$, or if it is not connected, then the theorem still holds, but the generic Galois group of an element $\gamma$ may no longer be the Weyl group (in the connected non split case, the Weyl group appears only as a subgroup) and it will depend (only) on the coset of the connected component of $\G$ it lives in. See \cite{prasad-rapinchuk-MSRI}, \cite{jouve-kowalski} and \cite{lubotzky-rosenzweig} for this and further information about the generic $Gal(\gamma)$.

\bigskip

\noindent \emph{Acknowledgements.} It is a pleasure to thank Florent Jouve, Alex Lubotzky, Laci Pyber, Andrei Rapinchuk, Lior Rosenzweig, Peter Sarnak and Terry Tao for their comments on an earlier version of this article. I am also grateful to Edmund Robertson and Colin Campbell for their patience and the gentle reminders that helped me finish this article.

\appendix

\bibliographystyle{abbrv}
\bibliography{bibfile}

\def\cprime{$'$}
\begin{thebibliography}{100}

\bibitem{aoun}
R.~Aoun.
\newblock Transience of algebraic varieties in linear groups and application to
  generic {Z}ariski density.
\newblock {\em to appear Annales de l'Institut Henri Poincar\'e,
  arXiv:1103.0944}.

\bibitem{aoun-free}
R.~Aoun.
\newblock Random subgroups of linear groups are free.
\newblock {\em Duke Math. J.}, 160(1):117--173, 2011.

\bibitem{balog-szemeredi}
A.~Balog and E.~Szemer{\'e}di.
\newblock A statistical theorem of set addition.
\newblock {\em Combinatorica}, 14(3):263--268, 1994.

\bibitem{borel}
A.~Borel.
\newblock {\em Linear algebraic groups}, volume 126 of {\em Graduate Texts in
  Mathematics}.
\newblock Springer-Verlag, New York, second edition, 1991.

\bibitem{bourgain-gamburd}
J.~Bourgain and A.~Gamburd.
\newblock Uniform expansion bounds for {C}ayley graphs of {${\rm SL}_2(\Bbb
  F_p)$}.
\newblock {\em Ann. of Math. (2)}, 167(2):625--642, 2008.

\bibitem{bourgain-gamburd-SLn}
J.~Bourgain and A.~Gamburd.
\newblock Expansion and random walks in {${\rm SL}_d(\Bbb Z/p^n\Bbb Z)$}. {II}.
\newblock {\em J. Eur. Math. Soc. (JEMS)}, 11(5):1057--1103, 2009.
\newblock With an appendix by Bourgain.

\bibitem{bourgain-gamburd-sarnak}
J.~Bourgain, A.~Gamburd, and P.~Sarnak.
\newblock Affine linear sieve, expanders, and sum-product.
\newblock {\em Invent. Math.}, 179(3):559--644, 2010.

\bibitem{bourgain-katz-tao}
J.~Bourgain, N.~Katz, and T.~Tao.
\newblock A sum-product estimate in finite fields, and applications.
\newblock {\em Geom. Funct. Anal.}, 14(1):27--57, 2004.

\bibitem{bourgain-kontorovich}
J.~Bourgain and A.~Kontorovich.
\newblock On {Z}aremba's conjecture.
\newblock {\em C. R. Math. Acad. Sci. Paris}, 349(9-10):493--495, 2011.

\bibitem{breuillard-PCMI}
E.~Breuillard.
\newblock Expander graphs, property $\tau$ and approximate groups.
\newblock {\em 2012 PCMI Summer school Lecture Notes, Park City-IAS, published
  by the AMS}.

\bibitem{breuillard-jordan}
E.~Breuillard.
\newblock An exposition of {C}amille {J}ordan's original proof of his theorem
  on finite subgroups of invertible matrices.
\newblock {\em Notes available from the author's web-site
  http://www.math.u-psud.fr/~breuilla/Jordan.pdf}.

\bibitem{breuillard-clermont}
E.~Breuillard.
\newblock Lecture notes on approximate groups.
\newblock {\em from a course given at IHP, Paris in 2011, available on the
  author's website}.

\bibitem{breuillard-annals}
E.~Breuillard.
\newblock A height gap theorem for finite subsets of {${\rm
  GL}_d(\overline{\Bbb Q})$} and nonamenable subgroups.
\newblock {\em Ann. of Math. (2)}, 174(2):1057--1110, 2011.

\bibitem{breuillard-msri}
E.~Breuillard.
\newblock A brief introduction to approximate groups.
\newblock {\em MSRI Publications}, 61, 2014.
\newblock in Thin groups and Super stong appoximation, edited by E. Breuillard
  and H. Oh.

\bibitem{breuillard-large-deviations}
E.~Breuillard.
\newblock A non concentration estimate for random matrix products.
\newblock 2014.
\newblock in preparation.

\bibitem{breuillard-gamburd}
E.~Breuillard and A.~Gamburd.
\newblock Strong uniform expansion in {${\rm SL}(2,p)$}.
\newblock {\em Geom. Funct. Anal.}, 20(5):1201--1209, 2010.

\bibitem{breuillard-green-tao-survey}
E.~Breuillard, B.~Green, and T.~Tao.
\newblock Small doubling in groups.
\newblock survey article to appear in Erd\"os centennial volume.

\bibitem{breuillard-green-tao-linear-announce}
E.~Breuillard, B.~Green, and T.~Tao.
\newblock Linear approximate groups.
\newblock {\em Electron. Res. Announc. Math. Sci.}, 17:57--67, 2010.

\bibitem{breuillard-green-tao-linear}
E.~Breuillard, B.~Green, and T.~Tao.
\newblock Approximate subgroups of linear groups.
\newblock {\em Geom. Funct. Anal.}, 21(4):774--819, 2011.

\bibitem{bgt-suzuki}
E.~Breuillard, B.~Green, and T.~Tao.
\newblock Suzuki groups as expanders.
\newblock {\em Groups Geom. Dyn.}, 5(2):281--299, 2011.

\bibitem{breuillard-green-tao-structure}
E.~Breuillard, B.~Green, and T.~Tao.
\newblock The structure of approximate groups.
\newblock {\em Publ. Math. Inst. Hautes \'Etudes Sci.}, 116:115--221, 2012.

\bibitem{bggt}
E.~Breuillard, R.~Guralnik, B.~Green, and T.~Tao.
\newblock {E}xpansion in simple groups of {L}ie type.
\newblock {\em to appear in J. Europ. Math. Soc.}

\bibitem{button}
J.~Button and C.~Roney-Dougal.
\newblock An explicit upper bound for the {H}elfgott $\delta$ in ${S}l(2,p)$.
\newblock {\em Preprint, arXiv:1401.2863}, 2013.

\bibitem{davenport}
H.~Davenport.
\newblock {\em Multiplicative number theory}, volume~74 of {\em Graduate Texts
  in Mathematics}.
\newblock Springer-Verlag, New York, third edition, 2000.
\newblock Revised and with a preface by Hugh L. Montgomery.

\bibitem{elekes-kiraly}
G.~Elekes and Z.~Kir{\'a}ly.
\newblock On the combinatorics of projective mappings.
\newblock {\em J. Algebraic Combin.}, 14(3):183--197, 2001.

\bibitem{EMO}
A.~Eskin, S.~Mozes, and H.~Oh.
\newblock On uniform exponential growth for linear groups.
\newblock {\em Invent. Math.}, 160(1):1--30, 2005.

\bibitem{freiman}
G.~A. Fre{\u\i}man.
\newblock {\em Foundations of a structural theory of set addition}.
\newblock American Mathematical Society, Providence, R. I., 1973.
\newblock Translated from the Russian, Translations of Mathematical Monographs,
  Vol 37.

\bibitem{furstenberg}
H.~Furstenberg.
\newblock Noncommuting random products.
\newblock {\em Trans. Amer. Math. Soc.}, 108:377--428, 1963.

\bibitem{gamburd-thesis}
A.~Gamburd.
\newblock On the spectral gap for infinite index ``congruence'' subgroups of
  {${\rm SL}_2(\bold Z)$}.
\newblock {\em Israel J. Math.}, 127:157--200, 2002.

\bibitem{goldsheid-margulis}
I.~Y. Gol{\cprime}dshe{\u\i}d and G.~A. Margulis.
\newblock Lyapunov exponents of a product of random matrices.
\newblock {\em Uspekhi Mat. Nauk}, 44(5(269)):13--60, 1989.

\bibitem{salehi-golsefidy-varju}
A.~S. Golsefidy and P.~P. Varj{\'u}.
\newblock Expansion in perfect groups.
\newblock {\em Geom. Funct. Anal.}, 22(6):1832--1891, 2012.

\bibitem{gorodnik-nevo}
A.~Gorodnik and A.~Nevo.
\newblock Splitting fields of elements in arithmetic groups.
\newblock {\em Math. Res. Lett.}, 18(6):1281--1288, 2011.

\bibitem{gowers}
W.~T. Gowers.
\newblock Quasirandom groups.
\newblock {\em Combin. Probab. Comput.}, 17(3):363--387, 2008.

\bibitem{green-ruzsa}
B.~Green and I.~Z. Ruzsa.
\newblock Freiman's theorem in an arbitrary abelian group.
\newblock {\em J. Lond. Math. Soc. (2)}, 75(1):163--175, 2007.

\bibitem{guivarch-raugi}
Y.~Guivarc'h and A.~Raugi.
\newblock Fronti\`ere de {F}urstenberg, propri\'et\'es de contraction et
  th\'eor\`emes de convergence.
\newblock {\em Z. Wahrsch. Verw. Gebiete}, 69(2):187--242, 1985.

\bibitem{guralnick-tiep}
R.~M. Guralnick and P.~H. Tiep.
\newblock Decompositions of small tensor powers and {L}arsen's conjecture.
\newblock {\em Represent. Theory}, 9:138--208 (electronic), 2005.

\bibitem{helfgott}
H.~A. Helfgott.
\newblock Growth and generation in {${\rm SL}_2(\Bbb Z/p\Bbb Z)$}.
\newblock {\em Ann. of Math. (2)}, 167(2):601--623, 2008.

\bibitem{helfgottSL3}
H.~A. Helfgott.
\newblock Growth in {${\rm SL}_3(\Bbb Z/p\Bbb Z)$}.
\newblock {\em J. Eur. Math. Soc. (JEMS)}, 13(3):761--851, 2011.

\bibitem{hoory-linial-wigderson}
S.~Hoory, N.~Linial, and A.~Wigderson.
\newblock Expander graphs and their applications.
\newblock {\em Bull. Amer. Math. Soc. (N.S.)}, 43(4):439--561 (electronic),
  2006.

\bibitem{hrushovski}
E.~Hrushovski.
\newblock Stable group theory and approximate subgroups.
\newblock {\em J. Amer. Math. Soc.}, 25(1):189--243, 2012.

\bibitem{hrushovski-kropholler-lubotzky}
E.~Hrushovski, P.~H. Kropholler, A.~Lubotzky, and A.~Shalev.
\newblock Powers in finitely generated groups.
\newblock {\em Trans. Amer. Math. Soc.}, 348(1):291--304, 1996.

\bibitem{hrushovski-pillay}
E.~Hrushovski and A.~Pillay.
\newblock Definable subgroups of algebraic groups over finite fields.
\newblock {\em J. Reine Angew. Math.}, 462:69--91, 1995.

\bibitem{hrushovski-wagner}
E.~Hrushovski and F.~Wagner.
\newblock Counting and dimensions.
\newblock In {\em Model theory with applications to algebra and analysis.
  {V}ol. 2}, volume 350 of {\em London Math. Soc. Lecture Note Ser.}, pages
  161--176. Cambridge Univ. Press, Cambridge, 2008.

\bibitem{humphreys}
J.~E. Humphreys.
\newblock {\em Linear algebraic groups}.
\newblock Springer-Verlag, New York-Heidelberg, 1975.
\newblock Graduate Texts in Mathematics, No. 21.

\bibitem{jordan}
C.~Jordan.
\newblock M\'emoire sur les \'equations diff\'erentielles lin\'eaires \`a
  int\'egrale alg\'ebrique.
\newblock {\em J. Reine Angew. Math.}, (84):89--215, 1878.

\bibitem{jouve-kowalski}
F.~Jouve, E.~Kowalski, and D.~Zywina.
\newblock Splitting fields of characteristic polynomials of random elements in
  arithmetic groups.
\newblock {\em Israel J. Math.}, 193(1):263--307, 2013.

\bibitem{katz}
N.~M. Katz.
\newblock {\em Gauss sums, {K}loosterman sums, and monodromy groups}, volume
  116 of {\em Annals of Mathematics Studies}.
\newblock Princeton University Press, Princeton, NJ, 1988.

\bibitem{kesten}
H.~Kesten.
\newblock Symmetric random walks on groups.
\newblock {\em Trans. Amer. Math. Soc.}, 92:336--354, 1959.

\bibitem{klopsch-nikolov-voll}
B.~Klopsch, N.~Nikolov, and C.~Voll.
\newblock {\em Lectures on profinite topics in group theory}, volume~77 of {\em
  London Mathematical Society Student Texts}.
\newblock Cambridge University Press, Cambridge, 2011.
\newblock Lectures from the course on Asymptotic Methods in Infinite Group
  Theory held in Oxford, September 2007, Edited by Dan Segal.

\bibitem{kneser}
M.~Kneser.
\newblock Starke {A}pproximation in algebraischen {G}ruppen. {I}.
\newblock {\em J. Reine Angew. Math.}, 218:190--203, 1965.

\bibitem{kontorovich}
A.~Kontorovich.
\newblock Levels of distribution and the affine sieve.
\newblock {\em to appear, Annales de la Faculte des Sci. Toulouse}, 2014.
\newblock Preprint.

\bibitem{kowalskinotes}
E.~Kowalski.
\newblock Lecture notes on expander graphs.
\newblock {\em http://www.math.ethz.ch/~kowalski/expanders.html}.

\bibitem{kowalski-book}
E.~Kowalski.
\newblock {\em The large sieve and its applications}, volume 175 of {\em
  Cambridge Tracts in Mathematics}.
\newblock Cambridge University Press, Cambridge, 2008.
\newblock Arithmetic geometry, random walks and discrete groups.

\bibitem{kowalski-bourbaki}
E.~Kowalski.
\newblock Crible en expansion.
\newblock {\em Ast\'erisque}, (348):Exp. No. 1028, vii, 17--64, 2012.
\newblock S{\'e}minaire Bourbaki: Vol. 2010/2011. Expos{\'e}s 1027--1042.

\bibitem{kowalski-explicit}
E.~Kowalski.
\newblock Explicit growth and expansion for {${\rm SL}_2$}.
\newblock {\em Int. Math. Res. Not. IMRN}, (24):5645--5708, 2013.

\bibitem{kowalski-notes}
E.~Kowalski.
\newblock Sieve in discrete groups, especially sparse.
\newblock {\em MSRI Publications}, 61, 2014.
\newblock in Thin groups and Super stong appoximation, edited by E. Breuillard
  and H. Oh.

\bibitem{kuranishi}
M.~Kuranishi.
\newblock Two elements generations on semi-simple {L}ie groups.
\newblock {\em K\=odai Math. Sem. Rep.,}, 1(5-6):9--10, 1949.
\newblock \{Volume numbers not printed on issues until Vol. {{\bf{7}}},
  (1955).\}.

\bibitem{landazuri-seitz}
V.~Landazuri and G.~M. Seitz.
\newblock On the minimal degrees of projective representations of the finite
  {C}hevalley groups.
\newblock {\em J. Algebra}, 32:418--443, 1974.

\bibitem{larsen-lubotzky}
M.~Larsen and A.~Lubotzky.
\newblock Normal subgroup growth of linear groups: the
  {$(G_2,F_4,E_8)$}-theorem.
\newblock In {\em Algebraic groups and arithmetic}, pages 441--468. Tata Inst.
  Fund. Res., Mumbai, 2004.

\bibitem{larsen-pink}
M.~J. Larsen and R.~Pink.
\newblock Finite subgroups of algebraic groups.
\newblock {\em J. Amer. Math. Soc.}, 24(4):1105--1158, 2011.

\bibitem{lennox}
J.~C. Lennox and J.~Wiegold.
\newblock Converse of a theorem of {M}al\cprime cev on nilpotent groups.
\newblock {\em Math. Z.}, 139:85--86, 1974.

\bibitem{lubotzky}
A.~Lubotzky.
\newblock {\em Discrete groups, expanding graphs and invariant measures}.
\newblock Modern Birkh\"auser Classics. Birkh\"auser Verlag, Basel, 2010.
\newblock With an appendix by Jonathan D. Rogawski, Reprint of the 1994
  edition.

\bibitem{lubotzky-survey}
A.~Lubotzky.
\newblock Expander graphs in pure and applied mathematics.
\newblock {\em Bull. Amer. Math. Soc. (N.S.)}, 49(1):113--162, 2012.

\bibitem{lubotzky-mann}
A.~Lubotzky and A.~Mann.
\newblock On groups of polynomial subgroup growth.
\newblock {\em Invent. Math.}, 104(3):521--533, 1991.

\bibitem{lubotzky-meiri}
A.~Lubotzky and C.~Meiri.
\newblock Sieve methods in group theory {I}: {P}owers in linear groups.
\newblock {\em J. Amer. Math. Soc.}, 25(4):1119--1148, 2012.

\bibitem{lubotzky-meiri2}
A.~Lubotzky and C.~Meiri.
\newblock Sieve methods in group theory {II}: the mapping class group.
\newblock {\em Geom. Dedicata}, 159:327--336, 2012.

\bibitem{lubotzky-rosenzweig}
A.~Lubotzky and L.~Rosenzweig.
\newblock The galois group of random elements of linear groups.
\newblock {\em preprint,arXiv:1205.5290, to appear in Amer. J. of Math.}

\bibitem{lubotzky-segal}
A.~Lubotzky and D.~Segal.
\newblock {\em Subgroup growth}, volume 212 of {\em Progress in Mathematics}.
\newblock Birkh\"auser Verlag, Basel, 2003.

\bibitem{MVW}
C.~R. Matthews, L.~N. Vaserstein, and B.~Weisfeiler.
\newblock Congruence properties of {Z}ariski-dense subgroups. {I}.
\newblock {\em Proc. London Math. Soc. (3)}, 48(3):514--532, 1984.

\bibitem{nathanson}
M.~B. Nathanson.
\newblock {\em Additive number theory}, volume 165 of {\em Graduate Texts in
  Mathematics}.
\newblock Springer-Verlag, New York, 1996.
\newblock Inverse problems and the geometry of sumsets.

\bibitem{nikolov-pyber}
N.~Nikolov and L.~Pyber.
\newblock Product decompositions of quasirandom groups and a {J}ordan type
  theorem.
\newblock {\em J. Eur. Math. Soc. (JEMS)}, 13(4):1063--1077, 2011.

\bibitem{nori}
M.~V. Nori.
\newblock On subgroups of {${\rm GL}_n({\bf F}_p)$}.
\newblock {\em Invent. Math.}, 88(2):257--275, 1987.

\bibitem{pink}
R.~Pink.
\newblock Strong approximation for {Z}ariski dense subgroups over arbitrary
  global fields.
\newblock {\em Comment. Math. Helv.}, 75(4):608--643, 2000.

\bibitem{platonov}
V.~Platonov.
\newblock The problem of strong approximation and the {K}neser-{T}its
  hypothesis for algebraic groups.
\newblock {\em Izv. Akad. Nauk SSSR Ser. Mat.}, 33:1211--1219, 1969.

\bibitem{platonov-rapinchuk}
V.~Platonov and A.~Rapinchuk.
\newblock {\em Algebraic groups and number theory}, volume 139 of {\em Pure and
  Applied Mathematics}.
\newblock Academic Press Inc., Boston, MA, 1994.
\newblock Translated from the 1991 Russian original by Rachel Rowen.

\bibitem{prasad-rapinchuk-MSRI}
G.~Prasad and A.~Rapinchuk.
\newblock Generic elements in {Z}ariski dense subgroups and isospectral locally
  symmetric spaces.
\newblock {\em MSRI Publications}, 61, 2014.
\newblock in Thin groups and Super stong appoximation, edited by E. Breuillard
  and H. Oh.

\bibitem{prasad-rapinchuk-MRL}
G.~Prasad and A.~S. Rapinchuk.
\newblock Existence of irreducible {$\Bbb R$}-regular elements in
  {Z}ariski-dense subgroups.
\newblock {\em Math. Res. Lett.}, 10(1):21--32, 2003.

\bibitem{prasad-rapinchuk-IHES}
G.~Prasad and A.~S. Rapinchuk.
\newblock Weakly commensurable arithmetic groups and isospectral locally
  symmetric spaces.
\newblock {\em Publ. Math. Inst. Hautes \'Etudes Sci.}, (109):113--184, 2009.

\bibitem{prasad-rapinchuk-AMS}
G.~Prasad and A.~S. Rapinchuk.
\newblock Number-theoretic techniques in the theory of {L}ie groups and
  differential geometry.
\newblock In {\em Fourth {I}nternational {C}ongress of {C}hinese
  {M}athematicians}, volume~48 of {\em AMS/IP Stud. Adv. Math.}, pages
  231--250. Amer. Math. Soc., Providence, RI, 2010.

\bibitem{pyber-szabo}
L.~Pyber and E.~Szab\'o.
\newblock Growth in finite simple groups of lie type of bounded rank.
\newblock {\em arXiv:1005.1858}, 2010.
\newblock Preprint.

\bibitem{pyber-szabo-msri}
L.~Pyber and E.~Szabo.
\newblock Growth in linear groups.
\newblock {\em MSRI Publications}, 61, 2014.
\newblock in Thin groups and Super stong appoximation, edited by E. Breuillard
  and H. Oh.

\bibitem{rapinchuk-survey}
A.~Rapinchuk.
\newblock On strong approximation for algebraic groups.
\newblock {\em MSRI Publications}, 61, 2014.
\newblock in Thin groups and Super stong appoximation, edited by E. Breuillard
  and H. Oh.

\bibitem{rivin}
I.~Rivin.
\newblock Walks on groups, counting reducible matrices, polynomials, and
  surface and free group automorphisms.
\newblock {\em Duke Math. J.}, 142(2):353--379, 2008.

\bibitem{rivin-zariski}
I.~Rivin.
\newblock Zariski density and genericity.
\newblock {\em Int. Math. Res. Not. IMRN}, (19):3649--3657, 2010.

\bibitem{ruzsa}
I.~Z. Ruzsa.
\newblock Generalized arithmetical progressions and sumsets.
\newblock {\em Acta Math. Hungar.}, 65(4):379--388, 1994.

\bibitem{salehi-sarnak}
A.~Salehi~Golsefidy and P.~Sarnak.
\newblock The affine sieve.
\newblock {\em J. Amer. Math. Soc.}, 26(4):1085--1105, 2013.

\bibitem{sanders-survey}
T.~Sanders.
\newblock The structure theory of set addition revisited.
\newblock {\em Bull. Amer. Math. Soc. (N.S.)}, 50(1):93--127, 2013.

\bibitem{sarnak-notices}
P.~Sarnak.
\newblock Selberg's eigenvalue conjecture.
\newblock {\em Notices Amer. Math. Soc.}, 42(11):1272--1277, 1995.

\bibitem{sarnak-notes}
P.~Sarnak.
\newblock Notes on thin groups.
\newblock {\em MSRI Publications}, 61, 2014.
\newblock in Thin groups and Super stong appoximation, edited by E. Breuillard
  and H. Oh.

\bibitem{sarnak-xue}
P.~Sarnak and X.~X. Xue.
\newblock Bounds for multiplicities of automorphic representations.
\newblock {\em Duke Math. J.}, 64(1):207--227, 1991.

\bibitem{sarnak-icm}
P.~C. Sarnak.
\newblock Diophantine problems and linear groups.
\newblock In {\em Proceedings of the {I}nternational {C}ongress of
  {M}athematicians, {V}ol.\ {I}, {II} ({K}yoto, 1990)}, pages 459--471. Math.
  Soc. Japan, Tokyo, 1991.

\bibitem{selberg}
A.~Selberg.
\newblock On the estimation of {F}ourier coefficients of modular forms.
\newblock In {\em Proc. {S}ympos. {P}ure {M}ath., {V}ol. {VIII}}, pages 1--15.
  Amer. Math. Soc., Providence, R.I., 1965.

\bibitem{serre-jordan-note}
J.-P. Serre.
\newblock On a theorem of {J}ordan.
\newblock {\em Bull. Amer. Math. Soc. (N.S.)}, 40(4):429--440 (electronic),
  2003.

\bibitem{shafarevich}
I.~R. Shafarevich.
\newblock {\em Basic algebraic geometry. 1}.
\newblock Springer, Heidelberg, third edition, 2013.
\newblock Varieties in projective space.

\bibitem{tao-forthcoming-book}
T.~Tao.
\newblock {\em Expansion in groups of Lie type}.
\newblock
  http://terrytao.wordpress.com/books/expansion-in-finite-simple-groups-of-lie-type.

\bibitem{tao-noncommutative}
T.~Tao.
\newblock Product set estimates for non-commutative groups.
\newblock {\em Combinatorica}, 28(5):547--594, 2008.

\bibitem{tao-vu}
T.~Tao and V.~H. Vu.
\newblock {\em Additive combinatorics}, volume 105 of {\em Cambridge Studies in
  Advanced Mathematics}.
\newblock Cambridge University Press, Cambridge, 2010.
\newblock Paperback edition [of MR2289012].

\bibitem{tits-alternative}
J.~Tits.
\newblock Free subgroups in linear groups.
\newblock {\em J. Algebra}, 20:250--270, 1972.

\bibitem{varju}
P.~P. Varj{\'u}.
\newblock Expansion in {$SL_d(\mathcal{O}_K/I)$}, {$I$} square-free.
\newblock {\em J. Eur. Math. Soc. (JEMS)}, 14(1):273--305, 2012.

\bibitem{weisfeiler}
B.~Weisfeiler.
\newblock Strong approximation for {Z}ariski-dense subgroups of semisimple
  algebraic groups.
\newblock {\em Ann. of Math. (2)}, 120(2):271--315, 1984.

\end{thebibliography}

\end{document}